\theoremstyle{definition}
\newtheorem{Theorem}{Theorem}
\newtheorem{theorem}{Theorem}[section]
\newtheorem{lemma}[theorem]{Lemma}
\newtheorem{corollary}[theorem]{Corollary}
\newtheorem{proposition}[theorem]{Proposition}
\newtheorem{remark}[theorem]{Remark}
\newtheorem{definition}[theorem]{Definition}
\newtheorem{notation}[theorem]{Notation}
\newtheorem{example}[theorem]{Example}
\newtheorem{question}[theorem]{Question}
\newcommand{\domain}{\M}
\newcommand{\range}{\B}
\newcommand{\AAA}{\mathbb{A}}
\newcommand{\I}{\mathcal{I}}     
\newcommand{\E}{\mathbb{E}}    
\newcommand{\EE}{\mathbb{E}}   
\newcommand{\A}{\mathcal{A}}   
\newcommand{\B}{\mathcal{B}}    
\newcommand{\M}{\mathcal{M}}  
\newcommand{\N}{\mathbb{N}}  
\newcommand{\C}{\mathbb{C}}     
\newcommand{\F}{\mathcal{F}}    
 \newcommand{\Q}{\mathcal{Q}}  
\newcommand{\s}{\mathcal{S}}  
\newcommand{\pp}{\mathcal{P}}   
\newcommand{\ii}{{\bf \tilde{i}}}
\newcommand{\dd}{{\bf \tilde{d}}}
\newcommand{\Ak}{\A^{\otimes_\B k}}
\newcommand{\qo}{\mathcal{O}^+}
\newcommand{\qs}{\mathcal{S}^+}
\newcommand{\qh}{\mathcal{H}^+}
\newcommand{\rqh}{\mathcal{H'}^{+}}
\newcommand{\qb}{\mathcal{B}^+}
\newcommand{\qu}{\mathcal{U}^+}
\newcommand{\co}{\mathcal{O}}
\newcommand{\cs}{\mathcal{S}}
\newcommand{\ch}{\mathcal{H}}
\newcommand{\cb}{\mathcal{B}}
\newcommand{\cu}{\mathcal{U}}
\begin{document}
\title[De Finetti Theorems]{ Full classification of de Finetti type theorems for *-random variables in classical and free probability}
\author{Weihua Liu}
\maketitle

\begin{abstract}
Classical distributional symmetries can be described as invariance under the actions of semigroups (or groups) of matrix structures, and subsequently under the coactions of continuous functions on the matrix semigroups (or groups) generated by entry functions. By considering noncommutative entry functions on matrix structures, Woronowicz introduced corepresentations of compact quantum groups, namely Woronowicz's $C^*$-algebras (also known as compact matrix pseudogroups).  We demonstrate that every nontrivial finite sequence of random variables admits a maximal distributional symmetry determined by a Woronowicz $C^*$-algebra. This establishes a probabilistic framework for classifying compact quantum groups.  Furthermore, we classify all de Finetti-type theorems for *-random variables that are invariant under distributional symmetries arising from compact matrix quantum groups in both classical and free probability settings. Our results show that only finitely many types of de Finetti theorems exist in these contexts, and the associated categories of (quantum) groups are the easy (quantum) groups introduced by Banica and Speicher.

\end{abstract}

\section{Introduction}

The classical  permutation groups $S_n$ of $n$ elements and orthogonal groups $O_n$ on $n$-dimensional spaces have natural actions on  $n$ random variables $x_1,\cdots,x_n$. 
For $\sigma \in S_n$, we can send $x_i$ to $x_{\sigma(i)}$ and for $\sigma=(q_{i,j})_{i,j=1,\cdots,n}\in O_n$, we can send $x_i$ to $\sum\limits_{k=1}^n q_{i,k}x_k$.
Random variables are said to be exchangeable (or orthogonal) if their joint distribution is invariant under the action of the corresponding group. 
For exchangeability, we have
$$(x_{1},\cdots,x_{n})\stackrel{d}{=}(x_{\sigma(1)},\cdots,x_{\sigma(n)})$$
for all $\sigma\in S_n$,
and for orthogonality, we have
$$(x_{1},\cdots,x_{n})\stackrel{d}{=}(\sum\limits_{k=1}^n q_{1,k}x_k,\cdots,\sum\limits_{k=1}^n q_{n,k}x_k)$$
for all $\sigma=(q_{i,j})_{i,j=1,\cdots,n}\in O_n.$
These two symmetries on finitely many random variables can be generalized to  infinite sequences of random variables by assuming that all finite subsequences possess the given property.
Note that exchangeability can also be expressed in terms of matrix operations as orthogonality by setting $\sigma=(\delta_{\sigma(i),j})_{i,j=1,\cdots,n}$. 
Based on this observation, we may introduce distributional symmetries for random variables beyond exchangeability and orthogonality by using matrix operations on elements from $n\times n$ matrices.
Given two $n\times n$ matrices  $g=(g_{i,j})$ and $h=(h_{i,j})$ over real numbers, if 
$$(x_{1},\cdots,x_{n})\stackrel{d}{=}(\sum\limits_{k=1}^n g_{1,k}x_k,\cdots,\sum\limits_{k=1}^n g_{n,k}x_k)$$
and 
$$(x_{1},\cdots,x_{n})\stackrel{d}{=}(\sum\limits_{k=1}^n h_{1,k}x_k,\cdots,\sum\limits_{k=1}^n h_{n,k}x_k),$$
then
$$
\begin{array}{rcl}
(x_{1},\cdots,x_{n})&\stackrel{d}{=}&(\sum\limits_{k=1}^n g_{1,k}x_k,\cdots,\sum\limits_{k=1}^n g_{n,k}x_k)\\
&\stackrel{d}{=}&(\sum\limits_{l=1}^n h_{1,l}\sum\limits_{k=1}^n g_{l,k}x_k,\cdots,\sum\limits_{l=1}^n h_{n,l}\sum\limits_{k=1}^n g_{l,k}x_k)\\
&=&\left(\sum\limits_{k=1}^n (hg)_{1,k}x_k,\cdots,\sum\limits_{k=1}^n (hg)_{n,k}x_k\right),
\end{array}$$
where $(hg)_{i,j}$ is the $(i,j)$-th entry of $hg$. Thus, the joint distribution of $(x_{1},\cdots,x_{n})$ is invariant under the matrix operation of the semigroup generated by $g$ and $h$. 
Two other important distributional symmetries, namely stationary and spreadability, can also be defined through matrix operations on semigroups of infinite-dimensional matrices generated by $(\delta_{i,i+1})_{i,j\in N}$ and $\{(\delta_{i,f(i)})_{i,j\in N} | f:\N\rightarrow \N, f\text{ is strictly increasing }\}$, respectively.  
The study of exchangeability dates back to 1930s, when de Finetti showed that infinite exchangeable sequences of random variables taking values in $\{0,1\}$ are conditionally independent and identically distributed. 
This result was later extended to random variables taking values in compact Hausdorff spaces by Hewitt and Savage \cite{HS}. 
On the other hand, Freedman studied sequences of orthogonal random variables and showed that infinite sequences with this property are conditionally independent Gaussian Families \cite{Freedman}. 
For spreadability, Ryll-Nardzewski proved an extended de Finitte type theorem,  infinite spreadable sequences of random variables  are conditionally independent and identically distributed.  

\begin{question} \label{quesition1}Do we have de Finetti-type theorems for distributions other than Gaussian? Additionally, do we have de Finetti-type theorem for a given semigroup of (infinite-dimensional) matrices?
\end{question}

Noncommutative probability as non-classical probability theory was introduced by von Neumann in the early thirties.
Roughly speaking, in a very general setting of noncommutative probability, the set of random variables is replaced by a (noncommutative) algebra, referred to as a probability space, and the expectation is replaced by a linear functional.
Free independence, introduced by Voiculescu to tackle the isomorphism problem of free group von Neumann algebras, serves as a quantum analogue to the independence relations in noncommutative probability. 
Notably, the only two unital universal independent relations for random variables, based on the natural requirements of associativity and a universal calculation rule for mixed moments, are classical independence and free independence \cite{Sp}.
Here, the universal independence relation means that the expectation functional on a probability space is completely determined by the expectation functionals on the sub-probability spaces that satisfy the given relation.
The independence relation in noncommutative probability is more complicated than in classical probability.
 Indeed, if we do not require the universal relation to be unital, one additional type of independence emerges, known as Boolean independence \cite{SW}. 
If we do not require associativity, we will encounter infinitely many independence relations that satisfy a common weak condition known as conditional independence \cite{JL}. 
In addition, we are able to construct infinite exchangeable sequences of noncommutative random variables to satisfy infinitely many symmetric universal independence relation \cite{JL}.
Therefore, in noncommutative probability, classical exchangeability cannot determine a universal independence relation conditionally as they do in the classical sense.  
Even if we strengthen exchangeability to orthogonality, we still do not obtain a de Finetti-type characterization for any universal independence relation.
For example,  an infinite sequence of q-Gaussian random variables is orthogonal but does not satisfy any common universal independence relation \cite{BS}.
In noncommutative probability, a de Finetti-type theorem for exchangeability is as follows: if the probability space generated by exchangeable sequences of random variables behaves well, such as in a W*-probability space with a faithful normal state, then the random variables are conditionally independent \cite{Ko}.
To characterize a given universal independence relation, such as freeness, it requires more distributional symmetries than classical exchangeability. 

In 1987, Woronowicz introduced a notion of compact matrix quantum groups, which provide symmetries for noncommutative structures \cite{Wo2}.  
Given a Lie group $G\subset M_n(\C)$, by Weierstrass theorem, the algebra of continuous function $C(G)$ on $G$ is generated by $\{u_{i,j}|i,j,=1,\cdots, n\}$, where $u_{i,j}(g)=g_{i,j}$ for $g=(g_{i,j})\in M_n(\C)$. 
The map $\Delta(u_{i,j})=\sum\limits_{k=1}^n u_{i,k}\otimes u_{k,j}$ defines a homomorphism from $C(G)$ to $C(G)\otimes C(G)$, which is called the comultiplication.  
By considering noncommutative $C^*$-algebras $\A$ generated by elements $\{u_{i,j}|i,j,=1,\cdots, n\}$ with well-defined comultiplication $\Delta(u_{i,j})=\sum\limits_{k=1}^n u_{i,k}\otimes u_{k,j}$ from $\A$ to $\A\otimes_{\min} \A$ and some other conditions, one gets the notion of a compact matrix quantum group. 
In this framework, Wang introduced the quantum analogues $\A_s(n)$ and $\A_o(n)$ of permutation groups and orthogonal groups, respectively \cite{Wan2}. The distributional symmetries associated with classical Lie groups $G \subset M_n(\C)$ can be extended via the entry functions ${u_{i,j} \mid i,j=1,\ldots,n}$. 
Consequently, the distributional symmetries generated by $\A_s(n)$ and $\A_o(n)$ are applied in noncommutative probability and are referred to as quantum exchangeability and quantum orthogonality.

K\"ostler and Speicher found quantum exchangeability can be used to characterize freeness. 
In fact, they  proved that an infinite sequence of noncommutative selfadjoint random variables  in $W^*$-probability space with a faithful normal state is quantum exchangeable if and only if the random variables are identically distributed and free with respect to the conditional expectation onto their tail algebra \cite{KS}.
Later,  the free analogue of de Finetti theorem for non selfadjoint random variables is proved by Curran \cite{Cu2} and 
the free analogue of Freedman's result associated with Wang's quantum orthogonal groups is also proved by Curran\cite{Cu}.
Here comes a natural question: 
\begin{question}\label{question2} Do we have de Finetti-type theorems for distributions other than Free Gaussian?  What are they?
\end{question}

To answer Question (\ref{quesition1}) and Question (\ref{question2}), we certainly need to consider matrix groups and matrix quantum groups other than (quantum)permutation groups and (quantum)orthogonal groups. 
In \cite{BaS}, by considering tensor categories spanned by certain partitions coming from the tensor category of $S_n$, Banica and Speicher introduced the notion of easy groups and easy quantum groups.  
Together with a work of Weber \cite{Weber1}, We know that there are six easy groups lying between (and including) $S_n$ and $O_n$ and seven easy quantum groups lying between (and including) $A_s(n)$ and $A_o(n)$.  
In \cite{BCS},  Banica, Curran, and Speicher found two de Finetti type theorems besides the original one and Freedman's in classical probability, as well as two free de Finetti type theorems beyond the quantum exchangeable case and the quantum orthogonal case in free probability. 
Later, the author showed that there are exactly four de Finetti type theorems in classical probability and four de Finetti type theorems in free probability in selfadjoint case \cite{Liu3}. 
It shows that  different matrix groups or matrix quantum groups  may yield the same de Finetti theorems. 
In addition, for each de Finetti type theorem in \cite{Liu3}, there exists a maximal distributional symmetry  in the sense that if the random variables satisfy more distributional symmetries beyond the given one, then the de Finetti theorem fails. 
Furthermore, the maximal distributional symmetries are represented by easy groups or easy quantum groups. 
Thus, the classification of maximal distributional symmetry (quantum) groups for infinite sequence of random variables is coarser than the classification of easy (quantum) groups.

In this work, we classify all de Finetti-type theorems for $*$-random variables in classical and free probability. We observe that the maximal distributional symmetries that yield non-trivial de Finetti theorems in these settings are given by unitary groups and quantum unitary groups.  Therefore, the maximal distributional symmetries for de Finetti theorems of $*$-random variables arise from (quantum) groups lying between the (quantum) permutation group and the (quantum) unitary group.  

In \cite{TW1,TW2}, Tarrago and Weber classified unitary easy (quantum) groups in both the free and the group cases. Similar to the self-adjoint case, we will see that the maximal distributional symmetries for $*$-random variables are represented by certain easy groups or easy quantum groups as identified by Tarrago and Weber.  We will adopt similar notation from \cite{TW1,TW2} for quantum groups satisfying specific universal relations.

Based on the assumption that random variables are classically independent or freely independent, we have the following de Finetti-type theorems for finite sequences.

\begin{Theorem}\label{maximal}
Let  $(\M,\E)$ be a $\B$-valued probability space.
\begin{itemize}
\item[A.] Free case: Assume that  $x_1,\cdots,x_n\in \M$ are  freely independent. Then,

\begin{enumerate}
\item  $x_i$'s are identically distributed if and only if $(x_1,\cdots,x_n)$ is $C(\qs,n)$-invariant over $\B$.
\item  $x_i$'s are identically distributed \textit{orthogonal} elements if and only if $(x_1,\cdots,x_n)$ is $C(\qo,n)$-invariant over $\B$.
\item  $x_i$'s are identically distributed \textit{shifted orthogonal} elements if and only if $(x_1,\cdots,x_n)$ is $C(\qb_s,n)$-invariant over $\B$.
\item  $x_i$'s are identically distributed \textit{symmetric} elements if and only if $(x_1,\cdots,x_n)$ is $C(\qh_s,n)$-invariant over $\B$.
\item  $x_i$'s are identically distributed \textit{m-unitary} elements for $m\geq 3$ if and only if $(x_1,\cdots,x_n)$ is $C(\qh_m,n)$-invariant over $\B$.
\item  $x_i$'s are identically distributed \textit{free unitary} elements if and only if $(x_1,\cdots,x_n)$ is $C(\qh_0,n)$-invariant over $\B$.
\item  $x_i$'s are identically distributed \textit{R-diagonal} elements if and only if $(x_1,\cdots,x_n)$ is $C(\rqh,n)$-invariant over $\B$.
\item  $x_i$'s are identically distributed \textit{shifted circular} elements if and only if $(x_1,\cdots,x_n)$ is $C(\qb,n)$-invariant over $\B$.
\item $x_i$'s are identically distributed \textit{circular} elements if and only if $(x_1,\cdots,x_n)$ is $C(\qu,n)$-invariant over $\B$.
\end{enumerate}

\item[B.] Classical case: Assume that $\M$ is a commutative probability space and $x_1,\cdots,x_n\in \M$ are  conditionally independent.  Then,
\begin{enumerate}
\item  $x_i$'s are identically distributed if and only if $(x_1,\cdots,x_n)$ is $C(\cs,n)$-invariant over $\B$.
\item  $x_i$'s are identically distributed \textit{orthogonal} elements if and only if $(x_1,\cdots,x_n)$ is $C(\co,n)$-invariant over $\B$.
\item  $x_i$'s are identically distributed \textit{shifted orthogonal} elements if and only if $(x_1,\cdots,x_n)$ is $C(\cb_s,n)$-invariant over $\B$.
\item  $x_i$'s are identically distributed \textit{symmetric} elements if and only if $(x_1,\cdots,x_n)$ is $C(\ch_s,n)$-invariant over $\B$.
\item  $x_i$'s are identically distributed \textit{m-unitary} elements for $m\geq 3$ if and only if $(x_1,\cdots,x_n)$ is $C(\ch_m,n)$-invariant over $\B$.
\item  $x_i$'s are identically distributed \textit{unitary} elements if and only if $(x_1,\cdots,x_n)$ is $C(\ch_0,n)$-invariant over $\B$.
\item  $x_i$'s are identically distributed \textit{shifted Gaussian} elements if and only if $(x_1,\cdots,x_n)$ is  $C(\cb,n)$-invariant over $\B$.
\item $x_i$'s are identically distributed \textit{Gaussian} elements if and only if $(x_1,\cdots,x_n)$ is $C(\cu,n)$-invariant over $\B$.
\end{enumerate}
\end{itemize}
\end{Theorem}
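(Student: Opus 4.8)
The plan is to establish the nine equivalences of part~A and the eight of part~B case by case, along a common two-step scheme, and to isolate the combinatorial core as a single dictionary lemma. Throughout write $\G(n)$ for the relevant (quantum) group and recall that ``$(x_1,\dots,x_n)$ is $C(\G,n)$-invariant over $\B$'' means there is a unital $\B$-bilinear coaction $\alpha$ with $\alpha(x_i)=\sum_j x_j\otimes u_{ji}$, valued in the amalgamated free product $\M\ast_\B C(\G,n)$ in the free case and in $\M\otimes_\B C(\G,n)$ in the classical case, such that $(\E\otimes\mathrm{id})\circ\alpha=\E(\cdot)\otimes 1$. Since every $\G$ here sits between $\qs$ and $\qu$ (resp.\ between $\cs$ and $\cu$), there is a surjection of Woronowicz $C^*$-algebras $C(\G,n)\twoheadrightarrow C(\qs,n)$ (resp.\ onto $C(\cs,n)$); composing $\alpha$ with it shows that $C(\G,n)$-invariance always entails quantum (resp.\ classical) exchangeability, hence in particular that $x_1,\dots,x_n$ are identically distributed over $\B$. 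Thus in every item, using also the standing independence hypothesis, it remains only to match the \emph{extra} distributional property of the common marginal against the \emph{extra} relations of $C(\G,n)$ beyond those of $C(\qs,n)$ --- equivalently, against the category of coloured partitions $\mathcal{C}(\G)$ in the Tarrago--Weber classification.

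For ``property $\Rightarrow$ invariance'' I would first note that, by the universal property of $C(\G,n)$, the assignment $\alpha(x_i)=\sum_j x_j\otimes u_{ji}$ extends to a unital $\ast$-homomorphism once the generating relations are checked on the right-hand side, so the real content is $\E$-invariance. Evaluating $(\E\otimes\mathrm{id})\circ\alpha$ on a $\ast$-monomial $x_{i_1}^{\varepsilon_1}\cdots x_{i_k}^{\varepsilon_k}$ ($\varepsilon_\ell\in\{1,\ast\}$) produces $\sum_{\mathbf{j}}\E\!\big(x_{j_1}^{\varepsilon_1}\cdots x_{j_k}^{\varepsilon_k}\big)\otimes u_{j_1i_1}^{\varepsilon_1}\cdots u_{j_ki_k}^{\varepsilon_k}$. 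Expanding the $\B$-valued mixed moment by the operator-valued moment--cumulant formula over $NC(k)$ in the free case and over $P(k)$ in the classical case, independence kills every block that is not index-constant, and identical distribution replaces each surviving block by a $\ast$-cumulant of the common marginal law $\mu$ with the colours inherited from $(\varepsilon_\ell)$. The hypothesis on $\mu$ translates into the statement that these marginal $\ast$-cumulants are supported on, and invariant along the orbits of, a group $K$ acting on colour words --- $K=\mathbb{Z}_2$ for ``symmetric'', $\mathbb{Z}_m$ for ``$m$-unitary'', $\mathbb{T}$ for ``(free) unitary'' --- possibly with a further support restriction: degree $\le 2$ for ``orthogonal'' and for ``circular''/``Gaussian'' (with rotation-invariance in the latter), and alternating colours for ``R-diagonal''; the bistochastic items permit in addition the singleton block, i.e.\ a nonzero mean. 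Substituting this and invoking the coloured-partition description of $C(\G,n)$, one checks that the inner index-sum, restricted to tuples constant on the blocks of a partition $\pi$, collapses to $1$ exactly when $\pi\in\mathcal{C}(\G)$ and $\pi$ is compatible with $(\varepsilon_\ell)$; what survives then reassembles $\E\!\big(x_{i_1}^{\varepsilon_1}\cdots x_{i_k}^{\varepsilon_k}\big)\otimes 1$. This ``property of $\mu$ $\longleftrightarrow$ category $\mathcal{C}(\G)$'' correspondence is the lemma to isolate and prove uniformly in $n$.

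For the converse ``invariance $\Rightarrow$ property'' only the extra property of the common marginal need be produced. I would bring in the Haar state $h$ on $C(\G,n)$ and apply $(\E\otimes h)\circ\alpha$ to $x_1^{\varepsilon_1}\cdots x_1^{\varepsilon_k}$; using $(\E\otimes\mathrm{id})\circ\alpha=\E(\cdot)\otimes 1$ and $h(1)=1$ this yields $\E\!\big(x_1^{\varepsilon_1}\cdots x_1^{\varepsilon_k}\big)=\sum_{\mathbf{j}}\E\!\big(x_{j_1}^{\varepsilon_1}\cdots x_{j_k}^{\varepsilon_k}\big)\,h\!\big(u_{j_11}^{\varepsilon_1}\cdots u_{j_k1}^{\varepsilon_k}\big)$. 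Computing the Haar integrals by the Weingarten formula attached to $\mathcal{C}(\G)$ and the right-hand moments by independence and identical distribution, the colour words that $\mathcal{C}(\G)$ can fill, compared with those it cannot, force the $\ast$-moments --- equivalently the $\ast$-cumulants --- of $x_1$ to vanish off the prescribed support and to be $K$-invariant there; hence $x_1$, and so every $x_i$, has the asserted property. For the four ``shifted'' (bistochastic) items $\qb_s,\qb,\cb_s,\cb$ one extra move is needed: the common mean is easily seen to lie in $\B$, so I would pass to the recentred family $x_i-\E(x_i)$, check that it is invariant under the corresponding non-shifted group ($\qo,\qu,\co,\cu$) --- the category of the bistochastic group differing from that of the full group precisely by the singletons that recentring removes --- and conclude that $x_i-\E(x_i)$ is orthogonal, circular, or Gaussian, i.e.\ that $x_i$ is of the shifted type.

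The step I expect to be the main obstacle is exactly this dictionary: proving, uniformly in $n$ and over a general base algebra $\B$, the precise correspondence between each one-variable $\ast$-distributional property and the coloured-partition category of the associated Tarrago--Weber quantum group, together with the Weingarten/cumulant bookkeeping that makes the index-sums collapse on the nose. Within it, the $\rqh$ (R-diagonal) case looks hardest, since its defining relations are of the form $u^t\bar u=\bar u u^t=\text{diagonal}$ rather than ``$=1$'', so its category is not stabilised under merging blocks and the matching has to go through the alternating-colour constraint instead of a colour-group action; the bistochastic cases in turn require the recentring lemma stated carefully in the $\B$-valued setting. By contrast, the bookkeeping that $\alpha(x_i)=\sum_j x_j\otimes u_{ji}$ genuinely defines a $\ast$-homomorphism into the (amalgamated free) product is routine once the explicit relations defining $C(\G,n)$ are written down.
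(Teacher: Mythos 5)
Your overall architecture --- use the surjection onto $C(\qs,n)$ to get identical distribution for free, then match the non-vanishing $*$-cumulants of the common marginal against the words $\dd$ for which $\sum_{\alpha}u^{d_1}_{\alpha,j}\cdots u^{d_k}_{\alpha,j}=1$ --- is the paper's, and your ``property $\Rightarrow$ invariance'' direction (moment--cumulant expansion over $NC(k)$ resp.\ $P(k)$, independence killing non-index-constant blocks, identical distribution, blockwise collapse of the index sums) is exactly the computation in Proposition \ref{non-vanishing cumulant implies identity}. Two cautions on how you set up the dictionary. First, the paper deliberately avoids identifying the allowed partitions with the Tarrago--Weber category $\mathcal{C}(\G)$: it only uses the set $NC^{\A,\dd}(k)$ of noncrossing partitions all of whose blocks are $\A$-sequences, and it explicitly remarks that the collapse $\sum_{\ii:\pi\leq\ker\ii}u^{\dd}_{\ii,1}=1_\A$ may also hold for partitions outside this set --- so your ``collapses to $1$ exactly when $\pi\in\mathcal{C}(\G)$'' is false as stated. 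Only the ``if'' direction is needed for sufficiency, and the ``only if'' direction is needed only for single blocks; that single-block ``only if'' is the real content of the dictionary, and the paper proves it (Proposition \ref{quantumgroup-Block identity}) by elementary manipulation of the defining relations together with explicit finite-dimensional representations (Example \ref{non-isomorphic example}) witnessing the failure of each identity, with no partition-category machinery and uniformly in $n$.

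The genuine gap is in your converse. You propose to apply $(\E\otimes h)\circ\alpha$ and evaluate $h\bigl(u^{\varepsilon_1}_{j_1 1}\cdots u^{\varepsilon_k}_{j_k 1}\bigr)$ by the Weingarten formula attached to $\mathcal{C}(\G)$. But the theorem is about a \emph{fixed finite} $n$: for small $n$ (relative to $k$) the Gram matrix of partition vectors is singular and the Weingarten formula is simply unavailable, and even when it is invertible, extracting the vanishing of an individual $\kappa^{(k)}\bigl[b_0x_1^{d_1},\dots,b_{k-1}x_1^{d_k}b_k\bigr]$ from the resulting weighted sums is precisely the bookkeeping you flag as the main obstacle. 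The paper needs none of this. In Proposition \ref{non-vanishing cumulant implies identity} it feeds the invariance identity directly into the moment--cumulant formula, inducts on $k$ so that every lower-order term reproduces itself, and is left with
$$\kappa^{(k)}\bigl[b_0x_1^{d_1},\dots,b_{k-1}x_1^{d_k}b_k\bigr]\otimes 1_\A=\kappa^{(k)}\bigl[b_0x_1^{d_1},\dots,b_{k-1}x_1^{d_k}b_k\bigr]\otimes\sum_{\alpha=1}^{n}u^{d_1}_{\alpha,1}\cdots u^{d_k}_{\alpha,1},$$
whence the cumulant vanishes unless the sum equals $1_\A$; this works for every $n$ and every colour word simultaneously, and is where your converse should be redirected. (Your recentering of the bistochastic cases is correct but unnecessary on this route: the singleton word being an $\A$-sequence for $\qb_s,\qb,\cb_s,\cb$ directly permits a nonzero $\kappa^{(1)}\in\B$, which is exactly the shift.)
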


On can see that the above theorem holds in a purely algebraic frame work.
 A difference between the free case and the classical case is that R-diagonal elements do not exist in the classical case. 
 
For infinite sequences of random variables, the distributional symmetries are given by a sequence of (quantum) groups $\A=(A_n)_{n\in \mathbb{N}}$ of size $n$.
For the sequence of quantum groups $C(\A,n)$ of size $n$ with the same property appeared in the previous theorem, we write $\A=\left( C(\A,n)\right)_{n\in \N}$ e.g, $\qs=\left( C(\qs,n)\right)_{n\in \N}$.
Two simple examples of exchangeable sequences of random variables are: (1) independent and identically distributed (i.i.d.) random variables, and (2) entirely identical random variables.  A rough idea for proving de Finetti-type theorems is to separate the i.i.d. part from the identical part of an exchangeable sequence. To analyze the identical part, one can either shift the indices of the random variables to infinity \cite{Ko} or take the limit of the averages of finite sequences associated with the Haar state on (quantum) groups \cite{Cu2}.  Both methods achieve the same goal: identifying the invariant tail algebra on which those (quantum) groups act trivially.  Consequently, the distributional symmetries pass to operator-valued random variables that are classically independent or freely independent.
Together with Curran's de Finetti type theorems on exchangeable $*$-random variables in $W^*$-probability space, we have the following results.

\begin{Theorem}\label{de Finetti}
Let $(\domain,\phi)$ be a $W^*$-probability space, $\phi$ is normal faithful and $\domain$ is generated by an infinite sequence of random variables $(x_i)_{i\in \N}$. 
Suppose that the joint distribution of $(x_i)$ is $\A$-invariant, where $\A$ is one of $\qs$, $\qo$, $\qb_s$, $\qh_s$, $\qb $, $\qh_m$, $\qh_0$, $\rqh$, $\qu$, $\cs$, $\co$, $\cb_s$, $\ch_s$, $\cb$, $\ch_m$, $\ch_0$, $\cu$. 
Then, there is a unital $W^*$-subalgebra $\range$ and a $\phi$-preserving conditional expectation $\E:\domain\rightarrow \range$ such that the following hold:
\begin{itemize}
\item[A.] Free case:

\begin{enumerate}
\item If $\A=\qs$,  then $(x_i)_{i\in \N}$ are freely independent in $(\domain,\E)$ with identical $\range$-valued distributions.
\item If $\A=\qo$,   then $(x_i)_{i\in \N}$ are freely independent in $(\domain,\E)$ with identical $\range$-valued orthogonal distributions.
\item If $\A=\qb_s$    then $(x_i)_{i\in \N}$ are freely independent in $(\domain,\E)$ with identical $\range$-valued shifted orthogonal distributions.
\item If $\A=\qh_s$,   then $(x_i)_{i\in \N}$ are freely independent in $(\domain,\E)$ with identical $\range$-valued distributions  such that $x_i,-x_i$ are identically distributed.
\item If $\A=\qb$    then $(x_i)_{i\in \N}$ are freely independent in $(\domain,\E)$ with identical $\range$-valued shifted circular distributions.
\item If $\A=\qh_m$, for $m\geq 3$, then $(x_i)_{i\in \N}$ are freely independent in $(\domain,\E)$ with identical $\range$-valued $m$-unitary distributions.
\item If $\A=\qh_0$, then $(x_i)_{i\in \N}$ are freely independent in $(\domain,\E)$ with identical $\range$-valued free unitary distributions.
\item If $\A=\rqh$,  then $(x_i)_{i\in \N}$ are freely independent in $(\domain,\E)$ with identical $\range$-valued R-diagonal distributions.
\item If $\A=\qu$,  then $(x_i)_{i\in \N}$ are freely independent in $(\domain,\E)$ with identical $\range$-valued circular distributions.
\end{enumerate}

\item[B.] Classical case:  Suppose that  $\domain$ is commutative.  In this case, $(x_i)_{i\in \N}$ are classical complex-valued random variables. 

\begin{enumerate}
\item If $\A=\cs$,  then $(x_i)_{i\in \N}$ are conditionally independent in $(\domain,\E)$ with identical distribution given $\range$.
\item If $\A=\co$,  then $(x_i)_{i\in \N}$ are conditionally independent in $(\domain,\E)$ with identical orthogonal distribution given $\range$.
\item If $\A=\cb_s$   then $(x_i)_{i\in \N}$ are conditionally independent in $(\domain,\E)$ with identical shifted orthogonal distribution given $\range$.
\item If $\A=\ch_s$   then $(x_i)_{i\in \N}$ are conditionally independent in $(\domain,\E)$ with identical distribution given $\range$ such that $x_i,-x_i$ are identically distributed
\item If $\A=\cb_1$    then $(x_i)_{i\in \N}$ are conditionally independent in $(\domain,\E)$ with identical shifted Gaussian distribution given $\range$.
\item If $\A=\ch_m$, for $m\geq 3$, then $(x_i)_{i\in \N}$ are conditionally independent in $(\domain,\E)$ with identical $m$-unitary distribution given $\range$.
\item If $\A=\ch_0$, then $(x_i)_{i\in \N}$ are conditionally independent in $(\domain,\E)$ with identical unitary distribution given $\range$.
\item If $\A=\cu$,  then $(x_i)_{i\in \N}$ are conditionally independent in $(\domain,\E)$ with identical Gaussian given $\range$.
\end{enumerate}
\end{itemize}
\end{Theorem}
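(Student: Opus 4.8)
The plan is to bootstrap from the quantum exchangeable (resp. exchangeable) case, which is Curran's theorem, by showing that the stronger symmetry $\A$ descends to the tail algebra, and then to read off the precise operator-valued distribution from the finite de Finetti theorems of Theorem~\ref{maximal}. First I would observe that each (quantum) group on the list contains the quantum symmetric group $\qs$ (resp. the symmetric group $\cs$) as a quantum subgroup, i.e. there is a surjective $*$-homomorphism $C(\A,n)\twoheadrightarrow C(\qs,n)$ (resp. $C(\A,n)\twoheadrightarrow C(\cs,n)$) intertwining the comultiplications. Since invariance under a coaction descends to every quotient, $\A$-invariance of $(x_i)_{i\in\N}$ forces quantum exchangeability (resp. exchangeability). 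By Curran's free de Finetti theorem for $*$-random variables in a $W^*$-probability space with a normal faithful state \cite{Cu2} (resp. the Hewitt--Savage theorem \cite{HS} in the commutative case), there is a $\phi$-preserving normal conditional expectation $\E\colon\domain\to\range$ onto the tail von Neumann algebra $\range=\bigcap_{N}W^*(x_k:k\geq N)$ such that $(x_i)_{i\in\N}$ are freely independent (resp. conditionally independent) over $\range$ with identical $\range$-valued distributions. This already supplies the ``independence'' and ``identically distributed'' halves of every item of the theorem.

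Next I would upgrade the scalar $\A$-invariance to $\range$-valued $\A$-invariance: writing $\alpha_n\colon\range\langle x_1,\dots,x_n\rangle\to\range\langle x_1,\dots,x_n\rangle\otimes C(\A,n)$ for the coaction fixing $\range$ elementwise, one must show that the $\range$-valued joint distribution is $C(\A,n)$-invariant over $\range$, i.e. $(\E\otimes\mathrm{id})\circ\alpha_n$ agrees with $\alpha_n$ followed by the evaluation $\E$ on monomials in the $x_i$. This is exactly where the limiting construction underlying $\E$ enters: $\E$ is realised either by shifting indices to infinity (K\"ostler's method) or by averaging finite sub-tuples against the Haar state of $\A$ (Curran's method), and in either presentation scalar $\A$-invariance is preserved under the averaging, so it passes to the level of the conditional expectation; moreover $\range$ is pointwise fixed by every $\A$-coaction because it is already fixed by the quantum symmetric coaction. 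I expect this descent step, together with verifying that the single $\range$ produced by the quantum exchangeable de Finetti theorem is simultaneously compatible with all the larger symmetries on the list, to be the main technical obstacle.

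Finally, with $\range$-valued freeness (resp. conditional independence) and $C(\A,n)$-invariance over $\range$ in hand for every finite sub-tuple $(x_1,\dots,x_n)$, I would invoke Theorem~\ref{maximal}, which holds in a purely algebraic framework and hence applies verbatim to the $\range$-valued distribution: for each fixed $n$ the ``only if'' direction converts $C(\A,n)$-invariance over $\range$ into the assertion that $x_1,\dots,x_n$ are identically distributed elements of the prescribed type (orthogonal, shifted orthogonal, symmetric, $m$-unitary, free unitary, R-diagonal, shifted circular, or circular, according to $\A$). Letting $n\to\infty$ yields the stated $\range$-valued distribution for the whole sequence, completing part~A. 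Part~B is identical, using part~B of Theorem~\ref{maximal} together with the classical de Finetti/Hewitt--Savage theorem and the classical-independence version of the descent argument; note that in the commutative setting $\M$ consists of genuine complex-valued random variables and the R-diagonal case simply does not arise.
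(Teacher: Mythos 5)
Your proposal follows essentially the same route as the paper: reduce to quantum exchangeability via the subgroup inclusion $\qs\subset\A$, invoke Curran's de Finetti theorem to obtain the tail algebra $\range$, the $\phi$-preserving conditional expectation and $\range$-valued freeness with identical distributions, transfer the $\A$-invariance to the $\range$-valued joint distribution using faithfulness of $\phi$, and then read off the distribution type from the finite (algebraic) de Finetti theorem, i.e.\ Proposition~\ref{non-vanishing cumulant implies identity} combined with Proposition~\ref{quantumgroup-Block identity}. If anything, your treatment of the descent of the symmetry to the operator-valued level is more explicit than the paper's one-sentence justification, but the argument is the same.
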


We will see that Theorem 2 relies on transferring quantum symmetries the operator-valued joint distribution, which is based on the conditional expectation is $\phi$-preserving and $\phi$ is faithful. 
In fact, the conditional expectation exists in a more general case under the assumption that the GNS representation of $\phi $ is faithful, which includes more interesting examples, see \cite{DK,DKW}.

Let $X=(x_1,\cdots,)$ be an infinite sequence of random variables in a probability space with faithful state $\phi$. 
Then we can define its distributional symmetry set to be a family of sequences of compact quantum groups as follows
$$\mathcal{DS}(X,\phi)=\{A=(\A_n)_{n\in \N}| (x_1,\cdots,x_n) \,\text{is}\, \A_n-\text{invariant for all}\, n \}.$$  
On the other hand, given a sequence of compact quantum groups $A=(\A_n)_{n=1,\cdots}$ of dimension $n$, we can define $\A$-invariant family of  infinite sequences of random variables as follows
$$\mathcal{JD}(A)=\{ (X=(x_n)_{n\in\N},\phi)| (x_1,\cdots,x_n) \,\text{is}\, \A_n-\text{invariant for all}\, n\}.$$

We will see that finite exchangeable sequences of random variables from a probability space with a faithful state, whether they have non-zero expectation or zero covariance, are invariant under a maximal (quantum) group.
Therefore, for an infinite (quantum) exchangeable sequences of random variables $X=(x_n)_{n\in \N}$,  
$\mathcal{DS}(X,\phi)$ has a maximal elements $\A=(\A_n)$ in the sense that for all $\B=(\B_n)\in \mathcal{DS}(X,\phi)$, we have $\B_n\subset \A_n$ for all $n$.  For convenience, will write $\mathcal{DS}(X,\phi)=\A$.
 The  determine all those possible maximal elements $\A=(\A_n)$ for exchangeable commutative random variables and quantum exchangeable random variables.


\begin{Theorem}\label{all}
Let $(\M,\phi)$ be a $W^*$-probability space,$\phi$ is normal faithful and $\M$ is generated by an infinite sequence of non-zero random variables $(x_i)_{i\in \N}$. 
If the $*$-joint distribution of $X=(x_i)_{i\in \N}$ is $\A$-invariant, where $\A=(\A(n))_{n\in\N}$ be a sequence of  compact matrix quantum groups for $n$.  Then the following hold:

\begin{itemize}

\item[A.] Free case: If $\qs\subset\mathcal{DS}(X,\phi)$, then $\mathcal{DS}(X,\phi)$ takes value from $\qs$, $\qo$, $\qb_s$, $\qh_s$, $\qb $, $\qh_m$, $\qh_0$, $\rqh$, $\qu$.

\item[B.] Classical case:  If $\cs\subseteq \mathcal{DS}(X,\phi)$ and $\M$ is commutative, then $\mathcal{DS}(X,\phi)$ takes value from $\cs$, $\co$, $\cb_s$, $\ch_s$, $\cb$, $\ch_m$, $\ch_0$, $\cu$. 
\end{itemize}
\end{Theorem}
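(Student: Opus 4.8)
The plan is to reduce the classification to a statement about operator-valued distributions and then to the combinatorics of categories of partitions. First I would use the hypothesis $\qs\subseteq\mathcal{DS}(X,\phi)$ (resp.\ $\cs\subseteq\mathcal{DS}(X,\phi)$) together with Theorem~\ref{de Finetti} in the case $\A=\qs$ (resp.\ $\A=\cs$) to produce a $\phi$-preserving conditional expectation $\E:\M\to\B$ onto the invariant subalgebra, under which $(x_i)_{i\in\N}$ are freely (resp.\ classically) independent over $\B$ with a common $\B$-valued $*$-distribution $\mu$. Because $\phi$ is faithful and $\E$ is $\phi$-preserving, the transfer argument used in Theorem~\ref{de Finetti} shows that $\A(n)$-invariance of the scalar $*$-joint distribution of $(x_1,\dots,x_n)$ upgrades to $\A(n)$-invariance of the $\B$-valued $*$-joint distribution. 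Moreover, since we are classifying symmetries of a $*$-joint distribution over a faithful state, compatibility of the coactions on the $x_i$ and on the $x_i^*$ forces the fundamental corepresentation $u=(u_{ij})$ of each $\A(n)$ and its conjugate $\bar u$ to be unitary, so that $C(\A(n))$ is a quotient of $C(\qu(n))$, i.e.\ $\A(n)\subseteq\qu(n)$. Combined with $\qs(n)\subseteq\mathcal A^{\max}(n)$ — where $\mathcal A^{\max}=(\mathcal A^{\max}(n))_n$ is the maximal element of $\mathcal{DS}(X,\phi)$ guaranteed by the discussion preceding the theorem — this pins the maximal symmetry between the quantum permutation group and the quantum unitary group, $S_n^+\subseteq\mathcal A^{\max}(n)\subseteq U_n^+$ (and likewise $S_n\subseteq\mathcal A^{\max}(n)\subseteq U_n$ classically).

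Next I would translate invariance into the language of fixed vectors. Since the $x_i$ are free (resp.\ independent) over $\B$ with identical distribution, the $\B$-valued mixed free (resp.\ classical) cumulants $\kappa_k(x_{i_1}^{\varepsilon_1},\dots,x_{i_k}^{\varepsilon_k})$ with color string $\varepsilon=(\varepsilon_1,\dots,\varepsilon_k)\in\{1,*\}^k$ vanish unless $i_1=\cdots=i_k$, and for equal indices they take a value $\kappa_k^{\varepsilon}\in\B$ independent of $i$. Writing the coaction as $x_i^{\varepsilon_j}\mapsto\sum_l x_l^{\varepsilon_j}\otimes u^{(\varepsilon_j)}_{li}$ with $u^{(1)}=u$ and $u^{(*)}=\bar u$, a moment--cumulant inversion compatible with the coaction shows that $\A(n)$-invariance over $\B$ is equivalent to the assertion that, for every $k$ and every $\varepsilon$, the diagonal cumulant vector $\xi_\varepsilon:=\sum_{i=1}^n e_i^{\otimes k}$, carrying the coefficient $\kappa_k^\varepsilon$, is a fixed vector of the corepresentation $u^{(\varepsilon_1)}\otimes\cdots\otimes u^{(\varepsilon_k)}$. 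Dually, by Woronowicz's Tannaka--Krein duality, $\mathcal A^{\max}(n)$ is the compact quantum subgroup of $U_n^+$ whose intertwiner category is generated by the morphisms of $S_n^+$ together with these $\xi_\varepsilon$ ranging over the color strings with $\kappa_k^\varepsilon\neq 0$.

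The payoff is that each diagonal vector $\xi_\varepsilon$ is exactly the vector $T_\pi$ attached to the one-block colored partition on $k$ points with coloring $\varepsilon$, and the assignment $\pi\mapsto T_\pi$ intertwines composition, tensoring and adjoints of partitions with the corresponding categorical operations; hence the category generated by any family of such $\xi_\varepsilon$ is $\mathrm{span}(\mathcal C)$ for a category $\mathcal C$ of colored partitions. Therefore $\mathcal A^{\max}(n)$ is a unitary easy quantum group containing $S_n^+$, and by the classification of unitary easy quantum groups due to Tarrago and Weber (together with the orthogonal sublattice of Banica, Speicher and Weber) the only categories of colored partitions generated by one-block partitions lying between the categories of $S_n^+$ and of $U_n^+$ correspond to the quantum groups $\qs,\qo,\qb_s,\qh_s,\qb,\qh_m\ (m\ge3),\qh_0,\rqh,\qu$; the R-diagonal case $\rqh$, whose constraint is that only alternating color strings of length two survive, is treated as one extra explicit case. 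The classical statement is identical with classical cumulants replacing free cumulants — mixed classical cumulants also vanish off the diagonal — so one lands in the lattice of easy groups between $S_n$ and $U_n$, namely the eight listed; R-diagonality cannot occur classically since it forces a noncommutative $*$-distribution, which is why that list is shorter by one. Finally, that each member of the list is actually attained (so the result is sharp, not merely an upper bound) follows by exhibiting model $\B$-valued distributions — Haar unitary, circular, $m$-th root, etc.\ — exactly as in Theorem~\ref{maximal}.

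The main obstacle is the second step: showing that $\A(n)$-invariance over $\B$ is \emph{equivalent} to the diagonal cumulant vectors being fixed, and in particular that the intertwiner category of the maximal invariant quantum group is generated by precisely those vectors and the morphisms of $S_n^+$. This requires verifying that the free (resp.\ classical) moment--cumulant formulas are compatible with the quantum coaction, so that invariance of all $\B$-valued moments is equivalent to invariance of all $\B$-valued cumulants, and then a Tannaka--Krein generation argument exploiting that $S_n^+\subseteq\mathcal A^{\max}(n)$ already supplies all partition morphisms while the $\B$-valued distribution of $\mu$ contributes only the diagonal cumulant data. The bookkeeping of the two-color strings for $*$-cumulants and the separate handling of the $\rqh$/R-diagonal case are the technically delicate points; once they are in place, the conclusion is read off from the Tarrago--Weber lattice.
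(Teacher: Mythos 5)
Your first reduction (apply Theorem~\ref{de Finetti} for $\qs$, transfer the invariance to the $\range$-valued joint distribution using faithfulness of $\phi$, and conclude $\A(n)\subseteq C(\qu,n)$) is exactly the paper's opening lemma, except that you assert unitarity of $u$ and $\bar u$ as following from ``compatibility of the coactions''; in the paper this is not automatic but is forced by a probabilistic dichotomy (either $\E[x_1]\neq 0$, or $\E[x_1]=0$ and $\E[x_1bx_2]=0$ for all $b$, which holds here by freeness over $\range$) fed into Proposition~\ref{unitary condition}. That is a small omission. The serious gap is in your second and third steps. What invariance plus non-vanishing of a cumulant $\kappa^{(k)}(x_1^{d_1}b_1,\dots,x_1^{d_k}b_k)$ actually yields (Proposition~\ref{non-vanishing cumulant implies identity}) is only the \emph{diagonal} identity $\sum_{\alpha}u^{d_1}_{\alpha,j}\cdots u^{d_k}_{\alpha,j}=1_{\A}$ for all $j$. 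Your plan requires the full fixed-vector statement $\sum_{\alpha}u^{d_1}_{\alpha,j_1}\cdots u^{d_k}_{\alpha,j_k}=\delta_{j_1,j_2}\cdots\delta_{j_{k-1},j_k}$, i.e.\ that the one-block colored partition is genuinely an intertwiner, before Tannaka--Krein can be invoked. Upgrading the diagonal identity to the full intertwiner relation, and from there to the defining universal relations of the listed quantum groups, is the mathematical core of the proof: the paper spends all of Section~7 on it, using contractivity of the Hadamard product against a unitary (Proposition~\ref{Contraction of Hadarmad product}), the resulting inversion formulas (Lemma~\ref{invertible}), and the positivity arguments showing the $u_{i,j}$ are partial isometries with orthogonality and, in the relevant cases, normality (Lemmas~\ref{ABAB Case}, \ref{AABB Case}, Proposition~\ref{sum of power m}). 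You flag this as ``the main obstacle'' but treat it as bookkeeping; it cannot be dispatched by duality, because Tannaka--Krein presupposes knowledge of the intertwiners, which is precisely what is missing.

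A second, related problem is the appeal to the Tarrago--Weber lattice. The paper explicitly cautions (after the definition of $NC^{\A,\dd}(k)$) that the set of color strings satisfying the diagonal sum identity does not determine the full category of partitions, and that the probabilistic classification is strictly coarser than the easy quantum group classification; so one cannot simply identify the maximal element of $\mathcal{DS}(X,\phi)$ with ``the easy quantum group generated by the one-block partitions with non-vanishing cumulants'' and read the answer off the lattice. The paper instead shows directly that each diagonal identity forces a universal relation placing $\A(n)$ inside one of the nine listed quantum groups, and then performs the intersection analysis (e.g.\ a subgroup of both $C(\qb,n)$ and $C(\rqh,n)$ is $C(\qs,n)$; the $\gcd$ argument for $C(\qh_{m_1},n)\cap C(\qh_{m_2},n)$) to see that the list is closed under the combinations that can occur. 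That case analysis is not avoided by your route, only relocated into an unverified combinatorial claim about which categories are generated by one-block colored partitions. To repair the proposal you would need to supply the operator-algebraic content of Section~7 (or an equivalent) before the Tannaka--Krein step, and then still carry out the intersection analysis.
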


The above theorem shows that, for infinite sequences, finitely many types of distributions in free probability and classical probability can be characterized by de Finetti-type theorems. 
In addition, all the quantum and classical groups in the above theorem provide maximal distributional symmetry for the corresponding distributions.  

The following diagram illustrates the relationships between these maximal distributional symmetries in free probability. For classical probability, the diagram is almost identical, except that the quantum signs are removed, and $\rqh$ has no classical analogue.

\begin{center}

\begin{tikzpicture}[xscale=1.5,yscale=1]

\path 
node   (m11) at (0,2) {$\qs$}         
node   (m21) at (1,3) {$\qb_s$}
node   (m22) at (1,2) {$\qh$}
node   (m23) at (1,1) {$\qh_m$}
node   (m33) at (2,1) {$\qh_0$}
node   (m32) at (2,2) {$\qo$}
node   (m31) at (2,3) {$\qb$}     
node   (m42) at (4,2) {$\qu$}
node   (m43) at (3,1) {$\rqh$}                               
 ;
{[->]       
\draw   (m11) --  (m21); 
\draw   (m11) --  (m22);
\draw   (m11) --  (m23); 
\draw   (m21) --  (m31);
\draw   (m21) --  (m32);
\draw   (m23) --  (m33);
\draw   (m32) --  (m42);
\draw   (m33) --  (m43);
\draw   (m31) --  (m42);
\draw   (m43) --  (m42);
\draw   (m22) --  (m32);
      }
   
\end{tikzpicture}

\end{center}

It is worth mentioning that in \cite{BCFMW}, the authors used the unitary dual group—where the coproduct map for quantum groups to tensor products is replaced by a map to free products—to define distributional symmetries. The unitary dual group is much larger than Wang's quantum unitary groups. Consequently, even for finite sequences, invariance under such a symmetry implies freeness.

Besides the first introductory section, the rest of the paper is organized as follows. In Section 2, we recall necessary notions and results from classical probability, free probability, and unitary quantum groups.

In Section 3, we demonstrate that all finite sequences of random variables admit a maximal distributional symmetry arising from a quantum semigroup. Moreover, if the random variables have non-zero expectation or zero covariance, then the maximal distributional symmetry is given by a quantum unitary group.

n Section 4, we establish the universal conditions for groups and quantum groups appearing in our de Finetti-type theorems. We will see that these universal conditions define universal $C^*$-bialgebras, which naturally become compact quantum groups.  Additionally, concrete corepresentations will be provided to demonstrate that all these universal quantum groups are distinct.

In Section 5, we introduce random variables with vanishing cumulants and provide examples in the scalar case.

In Section 6, we prove Theorem 1 and Theorem 2.  

In Section 7, we explore the properties for  universal relations of quantum groups to establish Theorem 3.  

In Section 8, we prove Theorem 3.

Section 9 contains concluding comments and remarks.

\section{Definitions and Notation}
We begin by introducing the fundamental notions of operator-valued free probability theory, with a particular focus on its combinatorial aspects as extensively developed in \cite{Sp1}. In the scalar-valued case, the definitions and results can be adapted by replacing the conditional expectations with linear functionals, see\cite{NS, VDN}. 

\begin{definition} A \textit{$\B$-valued probability space} $(\domain,\E)$ consists of a unital $*$-algebra $\A$, a $*$-subalgebra $\B$ of $\A$ contains $1_\A$ and a unital $\B-\B$ bimodule linear map $\E:\A\rightarrow \B$, i.e.
 $\E[1_\A]=1_\A$,  $\E[a_1+a_2]=\E[a_1]+\E[a_2]$
and   $$\E[b_1a_1b_2]=b_1\E[a_1]b_2,$$
  for all $a_1, a_2\in \A$, $b_1,b_2\in \B$.  Elements in $\domain$ will be called \textit{$\range$-valued random variables}, or simply \textit{random variables}. 
  
\end{definition}
When $\B=\C$, $(\A,\E)$ is simply referred to as a noncommutative probability space. 
Furthermore, $(\A,\E)$  is a $W^*$-probability space if $\domain$ is a von Neumann algebra and $\E$ is a normal state on $\domain$. 
$\E$ is faithful if $\E[xx^*]=0$ implies $x=0$. 
In this paper, we will assume $\E$ to be faithful but not necessarily tracial, i.e., $\E[xy]\neq\E[yx]$ in general. We say that $(\A,\E)$ is a commutative probability space if $\A$ is a commutative algebra.

The following examples illustrate how the preceding definitions relate to classical probability.
\begin{example} Let $\Omega, \Sigma, \mu)$ be a classical probability space,  and let  $\F\subset \Sigma$ be a $\sigma$-subalgebra.  
\begin{enumerate}
\item The pair $(L^\infty(\mu), \EE)$ is a commutative probability space, where $L^\infty(\mu)$ is the algebra of bounded $\Sigma$-measure functions, and $\EE$ is the expectation functional $\EE(f)=\int f d\mu$.
\item For $1\leq p\leq \infty$,  let $L^p(\mu)$ be the set of random variables with finite $p$-th moment. Then, $L(\mu)=\bigcap\limits_{p\in \N} L^p(\mu)$ is an algebra and  $(L(\mu),\EE)$ is a commutative probability space.
\item Let $\domain=L^\infty(\mu)$, and let $\range=L^\infty(\mu|_\F)$ be the subalgebra of bounded $\F$-measurable function on $\Omega$. Then, $(\domain, \EE[\cdot|\F])$ is a $\range$-valued probability space.
\end{enumerate}
\end{example}

In the real  noncommutative case, i.e., $\domain$ is a noncommutative algebra, to study the products of random variables and constants from $\range$, we will need to take account of the order of the product.  For instance,  $xyb$, $xby$, $ybx$, $ybx^*$ are different elements from $\domain$ in general.  
In addition, our  noncommutative random variables have finite mixed moments of orders since the expectation is defined on the algebra generated them.
To study the elements in $\domain$, we introduce  the following $*$-algebra of noncommutative polynomials with coefficients in $\range$.

\begin{notation} 
Given an index set $\I$,  we denote by  $\Q_{\I}(\range)=\range\langle t_i, t_i^*: i\in \I\rangle$ the $*$-algebra freely generated by $\range$, indeterminants $\{t_i|i\in \I\}$ and their adjoints $\{t^*_i|i\in \I\}$.  
We set $\Q_{n}(\range)=\Q_{\I}(\range)$ if $\I=\{1,\cdots,n\}$  and  set  $\Q_\I=\Q_{\I}(\range)$ if $\range=\C$.

By the universality of $\Q_{\I}(\range)$, for any family of random variables $x=(x_i)_{i\in \I}$ in a $\range$-valued probability space $(\domain, \E)$, 
there exists a unique $*$-homomorphism $ev_x: \Q_{\I}(\range)\rightarrow \domain$ such that $ev_x(t_i)=x_i$ and $ev_x(b)=b$ for all $b\in \range.$ 
We will denote $ev_x(p)$ by $p(x)$, for $p\in \Q_{\I}(\range)$.
\end{notation}

\begin{definition}  Let $x=(x_i)_{i\in \I}$ be a family of random variables in a $\range$-valued probability space $(\domain, \E)$.  The \textit{joint distribution} of $x$ is a map  $\E_x: \Q_{\I}(\range)\rightarrow \range$ defined by
$$\E_x(p)=\E(p(x)).$$
When $\I=\{ i\}$ contains a single element, $\E_{x_i}$ is called the \textit{distribution} of $x_i$.
\end{definition}

Note that $\E_x$ is $\B-\B$ bimodule linear since $\E$ is.   Consequently, the joint distribution of $(x_i)_{i\in \I}$ is determined by their joint moments
$$\E[b_0x_{i_1}^{d_1}b_1x_{i_2}^{d_2}\cdots b_{k-1}x_{i_k}^{d_k}b_k],$$
for $k\in \N$, $b_0,\cdots,b_{k}\in\range$, $i_1,\cdots,i_k\in \I$ and $d_1,\cdots,d_k\in \{1, *\}$.

\subsection{Free probability} Now, we turn to the introduction of independence relations in $\range$-valued probability.

\begin{definition}  Let $x=(x_i)_{i\in \I}$ be a family of random variables in a $\range$-valued probability space $(\domain, \E)$, and let $\E_x$ be the joint distribution of $x$.
\begin{enumerate}
\item The variables are called \textit{conditionally independent with respect to $\E$} if 
$$ \E_x[p_1\cdots p_k]=\E_x[p_1]\cdots \E_x[p_k],$$
whenever $p_j\in\range\langle t_{i_j}, t_{i_j}^*\rangle$ for $j=1,\cdots,k$ and $i_1,\cdots,i_k\in \I$ are distinct.
\item The variables are called \textit{free with amalgamation over $\range$} or  \textit{free with respect to $\E$} if 
$$ \E_x[p_1\cdots p_k]=0,$$
whenever $p_j\in\range\langle t_{i_j}, t_{i_j}^*\rangle$ such that $\E_x[p_j]=0$ for $j=1,\cdots,k$  and $i_1,\cdots,i_k\in \I$ are such that $i_1\neq i_2 \neq \cdots \neq i_k$.
\end{enumerate}
\end{definition}

\begin{remark}
Conditional independence, in general, is not universal in the sense of Speicher \cite{Sp}, because the joint distribution $\E_x$ is not necessarily determined solely by the distributions of the $\E_{x_i}$, even when $\range = \C$. For example, the conditional independence relation provides no information about the joint moment $\E[x_1 x_2 x_1]$. Indeed, there are infinitely many ways to define $\E_x$ based on the conditional independence assumption \cite{JL}.  

However, if we further assume that $\domain$ is a commutative algebra, then $\E_x$ is completely determined by the distributions of the $\E_{x_i}$, since products of random variables can be reordered freely. In this case, we recover the classical notion of conditional independence in probability.

By contrast, free independence provides a recursive approach to determine the joint distribution of $(x_i)_{i \in \I}$ from the distribution of each $x_i$, by reducing the degree of noncommutative polynomials step-by-step.

Additionally, if $\domain$ is commutative and $(x_i)_{i \in \I}$ are free in $\domain$, then at most one $x_i$ lies outside $\range$, and the others are from $\range$.
\end{remark}

\subsection{Partitions} Both operator-valued conditional independence in commutative probability spaces and free independence can be characterized by combinatorial theories developed by Speicher \cite{Sp1}; see also \cite{NS}. These theories mainly rely on the concepts of noncrossing partitions and cumulants, which we will introduce in the following.

\begin{definition}\label{partition} Let $\s$ be an ordered set.
\begin{enumerate}
\item A \textit{partition} $\pi$ of $\s$ is a collection of disjoint, nonempty sets $V_1,\cdots,V_r$ whose union is $\s$. 
The sets $V_1,\cdots,V_r$ are called blocks of $\pi$. 
The collection of all partitions of $S$ will be denoted by $\pp(\s)$.
\item Given two partitions $\pi$, $\sigma$, we say $\pi\leq \sigma$ if each block of $\pi$ is contained in a block of $\sigma$.
\item A partition $\pi\in \pp(\s)$ is \textit{noncrossing} if there is no quadruple $(s_1,s_2,r_1,r_2)$ such that $s_1<r_1<s_2<r_2$, $s_1,s_2\in V$, $r_1,r_2\in W$ and $V,W$ are two different blocks of $\pi$. The family of noncrossing partitions on $\s$ is denoted by $NC(\s).$


\item Let $\I$ be an index set, ${\ii}=(i_1,\cdots,i_k)\in \I^k$ be a sequence of indices from $\I$ and $[k]=\{1,\cdots,k\}$ be set of $k$-integers with the natural order. We denote by $\ker\ii$ the element of $\pp([k])$ whose blocks are the equivalence classes of the relation
$$s\sim t\Leftrightarrow i_s=i_t$$

\end{enumerate}
\end{definition}

To characterize specific distributions of $*$-random variables that will appear in this paper, we introduce the following definitions associated with sequences of $\{1,*\}$.
\begin{definition} Let $[k]=\{1,\cdots,k\}$ and $\dd=(d_1,\cdots,d_k)$ be a sequence of k elements from $\{1,*\}$. Thus, partitions from $NC(k)$ can be used to partition $\dd$
\begin{enumerate}
\item If $ \pi\in NC(k)$ such that, for every block of $\pi$, the difference of the number of $*$ and $1$ of  is divided by $m$, then we say that $\pi$ is $m$-divisible with respect to $\dd$. 
The set of $m$-divisible noncrossing partitions with respect to $\dd$ is denoted by $NC^{\dd,m}(k)$.

\item If $\pi \in NC(k)$ such that, for every block of $\pi$, the number of $*$ and $1$ are equal, then we say that $\pi$ is $\infty$-divisible with respect to $\dd$. The set of $\infty$-divisible noncrossing partitions with respect to $\dd$ is denoted by $NC^{\dd,\infty}(k)$.

\item If $ \pi\in NC(k)$ such that  $*$ and $1$ appear alternately with equal times in  every block of $\pi$, then we say that $\pi$ is alternating with respect to $\dd$.  
The set of alternating partitions with respect to $\dd$ is denoted by $NC^{\dd}(k)$.

\item  If $ \pi\in NC_2(k)$ such that every block of $\pi$ contains $*$ and $1$, then we say that $\pi$ is an alternating pair partition with respect to $\dd$.  
The set of alternating pair partitions with respect to $\dd$ is denoted by $NC_2^{\dd}(k)$.
\end{enumerate}
\end{definition}

\begin{remark}
Given integers $m_1, m_2$ such that $m_1$ divides $m_2$, for all $k\in \N$ and $\dd\in \{1,*\}^k$, we have 
$$ NC_2^{\dd}(k)\subseteq NC^{\dd}(k)\subseteq NC^{\dd,\infty}(k) \subseteq NC^{\dd,m_2}(k) \subseteq NC^{\dd,m_1 }(k) $$
\end{remark}

Similarly, we can define corresponding partitions of $\dd$ for studying distributions of commuting random variables.

\subsection{$\range$-functionals, cumulants and moments}

\begin{definition} Let $(\domain,\E)$ be a $\range$-valued probability space.
\begin{enumerate}

\item A $\range$-valued functional is a $k$-linear map $\rho^{(k)}:\domain^k\rightarrow \B$  such that
$$\rho^{(k)}(b_0a_1b_1,a_2b_2,\cdots,a_kb_k)=b_0\rho(a_1,b_1a_2,\cdots,b_{k-1}a_k)b_k$$
for all $a_1,\cdots,a_k\in\domain$ and $b_0,\cdots,b_k\in \range$.

\item If  $\A$ is commutative. The sequence of $\B$-valued $k$-linear maps $(\rho^{(k)})_{k\in\N}$  can be extended to $\B$-valued linear functionals $\rho^{(\pi)}$ with $\pi\in P(k)$ as following:
$$\rho^{(\pi)}(a_1,\cdots,a_n)=\prod\limits_{V\in \pi} \rho{(V)}(a_1,\cdots,a_k),$$
where if $V=(i_1<i_2<\cdots<i_s)$ is a block of $\pi$ then 
$$\rho(V)(a_1,\cdots,a_n)=\rho^{(s)}(a_{i_1},\cdots,a_{i_s}). $$

\item Given a sequence of $\B$-valued $k$-linear maps $(\rho^{(k)})_{k\in\N}$,  for $\pi\in NC(k)$, we can define  $\rho^{(\pi)}: \Ak\rightarrow \B$ recursively as follows:
$$\rho^{(\pi)}(a_1,\cdots,a_k)=\rho^{(\pi\setminus V)}(a_1,\cdots,a_l\rho^{(s)}(a_{l+1,\cdots,a_{l+s}}),a_{l+s+1},\cdots,a_k)$$
where $V=(l+1,l+2,\cdots,l+s)$ is an interval block of $\pi$.

\item For each $k\in \N$, the \textit{$\B$-valued moment functions} $ E^{(k)}: \Ak\rightarrow \B $ is defined by $$ E^{(k)}(a_1,\cdots,a_k)=\E[a_1\cdots a_k].$$

\item The $\B$-valued free cumulant functions $\kappa^{(k)}:\Ak\rightarrow\B$ are defined recursively by the following:
$$E^{(k)}(a_1,\cdots,a_n)=\sum\limits_{\pi\in NC(k)}\kappa^{(\pi)}(a_1,\cdots,a_k) $$
\end{enumerate}
\end{definition}

According to Speicher \cite{Sp1}, the following vanishing conditions on cumulants characterize freeness and classical independence.

\begin{theorem}
Let $(\domain,\E)$ be a $\range$-valued probability space and $(x_i)_{i\in I}$ be a family of random variables in $\domain$. 
\begin{enumerate}
\item Suppose that $\domain$ is a commutative algebra.  $(x_i)_{i\in I}$ is conditional independent given $\range$   if 
$$\rho^{(k)}(b_0x_{i_1}b_1,x_2,\cdots,x_{i_k}b_k)=0,$$
for all $k\in\N$, whenever $i_{j_1}\neq i_{j_2}$ for some $1\leq j_1, j_2\leq k$.

\item  $(x_i)_{i\in I}$ is free over $\range$   if 
$$\kappa^{(k)}(b_0x_{i_1}b_1,x_2,\cdots,x_{i_k}b_k)=0,$$
for all $k\in\N$, whenever $i_{j_1}\neq i_{j_2}$ for some $1\leq j_1, j_2\leq k$.

\end{enumerate}

\end{theorem}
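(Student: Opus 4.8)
The statement is the $\range$-valued form of Speicher's characterization of independence through the vanishing of mixed cumulants (\cite{Sp1}; see also \cite{NS} in the scalar case), and the plan is to follow that argument. I would treat the free case, part~(2), in detail; part~(1) is entirely parallel, with the noncrossing lattice $NC(k)$ replaced by the full partition lattice $P(k)$ and the free cumulants $\kappa^{(k)}$ replaced by the classical cumulants $\rho^{(k)}$, defined by the analogous inversion $E^{(k)}=\sum_{\pi\in P(k)}\rho^{(\pi)}$. Write $\mathcal{A}_i\subseteq\domain$ for the $*$-subalgebra generated by $\range$ and $x_i$; by the $\range$-bimodule property of cumulants and by multilinearity it suffices, in both directions, to work with cumulant entries $a_\ell\in\mathcal{A}_{i(\ell)}$ and to test the independence relations on such words. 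Three elementary facts are used throughout: $\kappa^{(1)}(a)=\E[a]$; a cumulant $\kappa^{(n)}$, $n\geq2$, vanishes once one entry equals $1_\domain$, hence a singleton block carrying a centered entry annihilates the nested contribution of its partition; and the combinatorial fact that if $i(1)\neq i(2)\neq\cdots\neq i(n)$ then a noncrossing partition of $[n]$ all of whose blocks are constant on $i$ must contain a singleton, since a noncrossing partition always has an interval block and an interval block of size $\geq2$ would contain two adjacent, hence differently indexed, positions.

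\emph{Vanishing of mixed cumulants $\Rightarrow$ independence.} Assume $\kappa^{(n)}$ annihilates every tuple from the $\mathcal{A}_i$ whose indices are not all equal. Given centered $p_j\in\range\langle t_{i_j},t_{i_j}^*\rangle$ with $\E[p_j(x)]=0$ and $i_1\neq i_2\neq\cdots\neq i_k$, the moment-cumulant relation gives $\E[p_1(x)\cdots p_k(x)]=\sum_{\pi\in NC(k)}\kappa^{(\pi)}(p_1(x),\dots,p_k(x))$. By the hypothesis a summand survives only if every block of $\pi$ is constant on the index sequence; by the combinatorial fact such a $\pi$ then has a singleton block, whose nested contribution vanishes because the corresponding $\E[p_j(x)]=0$. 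Hence every term is $0$ and $\E[p_1(x)\cdots p_k(x)]=0$, which is freeness over $\range$. In the commutative case the indices $i_1,\dots,i_k$ are merely distinct, the only index-constant partition is the partition into singletons, and resumming leaves $\E[p_1(x)\cdots p_k(x)]=\prod_j\E[p_j(x)]$, i.e.\ conditional independence given $\range$.

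\emph{Independence $\Rightarrow$ vanishing of mixed cumulants.} I would induct on the arity $n$, the claim being $\kappa^{(n)}(a_1,\dots,a_n)=0$ (resp.\ $\rho^{(n)}(a_1,\dots,a_n)=0$) whenever $a_\ell\in\mathcal{A}_{i(\ell)}$ with the $i(\ell)$ not all equal; $n=1$ is vacuous and $n=2$ is a direct centering computation. For the step, isolate the top cumulant, $\kappa^{(n)}(a_1,\dots,a_n)=\E[a_1\cdots a_n]-\sum_{\pi\neq 1_n}\kappa^{(\pi)}(a_1,\dots,a_n)$, and use the inductive hypothesis to kill every $\pi$ with a non-index-constant block, so that $\sum_{\pi\neq 1_n}\kappa^{(\pi)}=\sum_{\pi\leq\ker,\ \pi\neq 1_n}\kappa^{(\pi)}=\sum_{\pi\leq\ker}\kappa^{(\pi)}$ (here $1_n\not\leq\ker$ since the indices are not all equal). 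In the commutative case this last sum factors blockwise into $\prod_{W\in\ker}\E\big[\prod_{\ell\in W}a_\ell\big]$, which equals $\E[a_1\cdots a_n]$ by conditional independence, whence $\rho^{(n)}(a_1,\dots,a_n)=0$ and part~(1) is complete. In the free case I would first reduce to consecutively distinct indices: whenever $i(\ell)=i(\ell+1)$, the formula for free cumulants with a product entry expresses $\kappa^{(n-1)}(\dots,a_\ell a_{\ell+1},\dots)$ as $\kappa^{(n)}(a_1,\dots,a_n)$ plus $\kappa^{(\pi)}$-terms killed by the inductive hypothesis, so $\kappa^{(n)}(a_1,\dots,a_n)$ equals a mixed cumulant of arity $n-1$, which vanishes; iterating reaches the consecutively distinct case. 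There one further reduces, via $a_\ell=(a_\ell-\E[a_\ell])+\E[a_\ell]$, to all entries being centered — a scalar $\E[a_\ell]\in\range$ is absorbed into a neighbouring entry, lowering the arity while keeping the indices mixed (apart from a handful of small configurations, e.g.\ $n=3$ with the middle entry scalar, which are checked by hand), and a scalar in the first or last slot creates a unit entry, giving $0$. Finally, for consecutively distinct and centered entries, $\E[a_1\cdots a_n]=0$ by freeness, and in $\sum_{\pi\leq\ker}\kappa^{(\pi)}$ every $\pi$ has, by the combinatorial fact, a singleton block whose contribution vanishes by centering; hence $\kappa^{(n)}(a_1,\dots,a_n)=0$.

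\emph{Main obstacle.} The only genuinely delicate part is the pair of reductions in the second direction of the free case: merging neighbouring equal-index entries requires the product formula for free cumulants over $NC$ together with a careful verification that every auxiliary $\kappa^{(\pi)}$-term it produces falls under the inductive hypothesis, and the centering step must carry the noncommuting $\range$-coefficients along — they can be absorbed to the left or to the right, which changes the subalgebra $\mathcal{A}_i$ a modified entry belongs to — with a few low-arity configurations treated directly. All of this is worked out in \cite{Sp1}, which one may also simply cite.
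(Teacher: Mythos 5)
The paper does not prove this theorem at all: it is quoted as a known result of Speicher with a citation to \cite{Sp1}, which is also the fallback you offer. Your reconstruction is the standard moment--cumulant argument and is sound in outline (the only point stated too lightly is that in the first direction the hypothesis, which concerns cumulants with single-variable entries, must be upgraded to mixed cumulants with entries from the generated algebras $\mathcal{A}_{i}$ via the same products-as-entries formula you invoke later), so it is consistent with, and more detailed than, what the paper provides.
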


\subsection{Compact  quantum groups}

The notion of compact matrix quantum groups was introduced by Woronowicz in \cite{Wo2} and later refined to the following equivalent definition in \cite{Wo3}.

\begin{definition} \label{quantum groups}A compact matrix quantum group for $n\geq 1$ is a unital $C^*$-algebra $\A$ with a $C^*$-homomorphism $\Delta$ from $\A$ to $\A\otimes_{\min}\A$ such that
\begin{itemize}
\item $\A$ is generated by $n^2$ elements $u_{i,j}$, $i,j=1,\cdots,n$
\item the matrices $u=(u_{i,j})$ and $\bar{u}=(u_{i,j}^*)$ are invertible in $M_n(\A)$
\item $\Delta(u_{i,j})=\sum\limits_{k=1}^n u_{i,k}\otimes u_{k,j}$.
 \end{itemize}
A compact quantum group is unitary if  $u=(u_{i,j})$, and consequently  $\bar{u}_{i,j}=(u_{i,j}^*)$, is unitaries in $M_n(\A)$.
\end{definition}
Let $u^t=(u_{j,i})$ be the transpose of $u$. 
Since $\bar{u}$ is invertible, so is its conjugate transpose $(\bar u)^*=u^t$.

In \cite{Banica}, $\A$ is regarded as the set of continuous functions on a compact quantum Lie group $G$, denoted by $C(G)$, even though $\A$  may not be  commutative in general. 
The $\A$-valued matrix $u=(u_{i,j})$ is is referred to as  a corepresentation of $\A$. 
We denote the pair consisting of the quantum group $G$ and its corepresentation $u$ by $ (G,u)$.
A quantum subgroup of $(H,v)$ is defined as a pair $(G,u)$ such that there exists a $*$-homomorphism $\Phi: C(H) \rightarrow C(G)$ satisfying  $\Phi(v_{i,j})=u_{i,j}, \forall i,j=1,\cdots,n.$ 
In this context, we consider the maximal compact quantum unitary group $C(\qu,n)$ introduced by Wang in \cite{Wan2}. This quantum group is the universal $C^*$-algebra generated by $\{u_{i,j}|i,j=1,\cdots,n\}$ , satisfying the following algebraic relations:
$$\sum_{k=1}^n u_{i, k} u^*_{j, k}=\sum_{k=1}^n u^*_{i, k} u_{j, k} =\sum_{k=1}^n u_{k,i} u^*_{k,j} =\sum_{k=1}^n u_{k,i} u^*_{k,j}  =\delta_{i, j}1.$$
The coproduct is well-defined since $\left(\Delta(u_{i,j})\right)$ satisfies the above relations. 
In fact, we will show that requiring only two of the four summations to hold is sufficient; the remaining two will then follow automatically.

In this paper, we will study the distributional symmetries for de Finetti-type theorems in free probability, which lie between the following two fundamental quantum groups introduced by Wang in \cite{Wan2}.
\begin{definition} Quantum Unitary groups and quantum permutation groups:

\begin{enumerate} 
\item $C(\qu,n)$ is the universal $C^*$-algebra generated by $n^2$  elements $u_{i,j}$, such that $u=\left(u_{i j}\right)$ is a unitary in $M_n\left(C(\qu,n) \right)$.

\item $C(\qs,n)$ is the universal $C^*$-algebra generated by $n^2$ orthogonal projections $u_{i j}$, such that the sum along any row or column of $u=\left(u_{i j}\right) M_n\left(C(\qs,n)\right)$ is the identity of $C(\qs,n)$.
\end{enumerate}
\end{definition}


\subsection{Quantum distributional symmetries for $*$-random variables}
Notice that given a corepresentation $u=(u_{i,j})$ of a quantum group $\A$ for $n$, there exists a unique $*$- homomorphism $\alpha_\A: \B\langle t_k,t_k^*|1\leq k\leq n \rangle \rightarrow \B\langle t_k,t_k^*|k\in \N \rangle\otimes \A $,
which maps elements from the algebra generated by $t_k$, its adjoint $t_k^*$ for $1\leq k\leq n$ and an algebra $\B$ to its algebraic tensor product with $\A$ such that 
$$
\alpha_\A\left(t_i^d\right)= \sum_{k=1}^n t_k^d \otimes u_{k, i}^d 
$$
for any $1\leq k\leq n, d \in\{1, *\}$ and $\alpha_\A(b)=b\otimes 1_\A$ for all $b\in \range$.
Now, we will apply this coaction to the distributional symmetry induced by $ \A $ on $n$  noncommutative random variables.

\begin{definition}
Let $(x_1,\cdots,x_n)$ be a sequence of random variables in $(\domain, \E)$ and $\A$ is a compact quantum group generated by $u=(u_{i,j})$. We say that $(x_1,\cdots,x_n)$ is $\A$-invariant if 
$$
\left(\mu_x \otimes id_\A\right)\left(\alpha_\A(p)\right)=\mu_x(p) 1_\A
$$
for all $ p \in \B\left\langle t_k, t_k \mid k=1, \cdots, n\right\rangle$, where $\mu_x$ is the joint distribution of $\left(x_1, \cdots, x_n\right)$.
\end{definition}

More explicitly, a sequence of random variables $\left(x_1, \cdots, x_n\right)\in (\M,\E)$  is $\A$-invariant if 
$$
 \sum_{1 \leq i_1, \cdots i_k \leq n} \E\left[b_0 x_{i_1} b_1 x_{i_2} \cdots b_{k-1} x_{i_k} b_k\right] \otimes u_{i_1, j_1} \cdots u_{i_k, j_k}  =\E\left[b_0 x_{j_1} b_1 \cdots b_{k-1} x_{j_k} b_k\right]\otimes 1_\A,
$$
for all $1 \leqslant j_1, \cdots, j_k \leqslant n, b_0, \cdots, b_k \in \range$.

The distributional symmetry for an infinite sequence of random variables $(x_n)_{n\geq 1}$ characterized  by a sequence of compact quantum groups for $n\geq 1$. 
Given a corepresentation $u=(u_{i,j})$ of a quantum group $\A$ for $n$, the $*$- homomorphism $\alpha_\A$ naturally extends to $\tilde{\alpha}_\A: \B\langle t_k,t_k^*|k\in \N \rangle \rightarrow \B\langle t_k,t_k^*|k\in \N \rangle\otimes \A $ such that 

$$
\tilde{\alpha}_\A\left(t_i^d\right)= \begin{cases}\sum_{k=1}^n t_k^d \otimes u_{k, i}^d & \text { if } 1 \leqslant i \leqslant n \\ t_i^d & \text { otherwise }\end{cases}
$$
for any $k \in \mathbb{N}, d \in\{1, *\}$ and $\alpha_\A(b)=b\otimes 1_\A$ for all $b\in \range$.
For convenience, we will denote $\alpha_\A$ simply for $ \tilde{\alpha}_\A$.

\begin{definition}
Let $x=(x_n)_{n\geq 1}$ be an infinite sequence of random variables in $(\domain, \E)$ and $\AAA=(\A_n)_{n\geq 1}$ be sequence of compact quantum group for $n$. We say that $x$ is $\AAA$-invariant if 
$$
\left(\mu_x \otimes id_{\A_n}\right)\left(\alpha_{\A_n}(p)\right)=\mu_x(p) 1_{\A_n}
$$
for all $ p \in \B\left\langle t_k, t_k \mid k\geq 1 \right\rangle$, where $\mu_x$ is the joint distribution of $\left(x_1, \cdots, x_n\right)$.
\end{definition}

\section{Maximal distributional symmetry for finite sequences of random variables}

First, we generalize the concept of distributional symmetries arising from quantum groups to the distributional symmetries associated with a given finite sequence of random variables.

\begin{definition} 
Let $(x_i)_{i=1,\cdots,n}$ be a sequence of random variables in a $\B$-valued probability space $(\M, \E)$ and let $\mu$ be the joint distribution of $(x_i)_{i=1,\cdots,n}$ with respect to $\E$.
Let $\A$ be a unital $C^*$-algebra generated by $\{u_{i,j}|{i,=1,\cdots, n}\}$, and define the homomorphism
$\alpha:\B\langle X_1,\cdots,X_n\rangle \rightarrow  \B\langle X_1,\cdots,X_n\rangle\otimes \A$  generated by
$$\alpha(X_i)=\sum\limits_{k=1}^n  X_k \otimes u_{k,i}, \quad \alpha(b)=b \otimes 1_\A $$
for all $i=1,\cdots,n$ and $b\in \B$.
We say that $(x_i)_{i=1,\cdots,n}$ is 
$\A$-invariant if 
$$\mu [p]\otimes 1_\A =\mu \otimes id_{\A} (\alpha(p)),$$  for all $p\in \B\langle X_1,\cdots,X_n\rangle$.
\end{definition}

\begin{proposition}\label{x-invariant algebra}
Given a sequence of random variables  $(x_i)_{i=1,\cdots,n}$ in a $\B$-valued probability space $(\M, \E)$,  
let $\A$ be the universal $C^*$-algebra  generated by $\{u_{i,j}|{i,=1,\cdots, n}\}$ such that the matrix
$u=(u_{i,j})$ is unitary in $\A\otimes M_n(\C)$ and $(x_i)_{i=1,\cdots,n}$ is  $\A$-invariant.
Then, $\A$ forms a compact quantum group.
\end{proposition}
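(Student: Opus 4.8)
The plan is to verify the single remaining axiom of Definition \ref{quantum groups}, namely the existence of a $C^*$-homomorphism $\Delta:\A\to\A\otimes_{\min}\A$ satisfying $\Delta(u_{i,j})=\sum_k u_{i,k}\otimes u_{k,j}$; the generation condition and the invertibility of $u$ (and hence of $\bar u$, since $u$ unitary forces $\bar u = (u^*)$ to be the inverse of $u^t$) are built into the hypothesis. Since $\A$ is defined by a universal property, the natural strategy is: exhibit a unital $C^*$-algebra $\B$ together with elements $v_{i,j}\in\B$ such that $(v_{i,j})$ is unitary in $M_n(\B)$ \emph{and} $(x_i)$ is $\B$-invariant via the $v_{i,j}$; then the universal property of $\A$ produces a $*$-homomorphism $\A\to\B$ sending $u_{i,j}\mapsto v_{i,j}$. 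Applying this with $\B=\A\otimes_{\min}\A$ and $v_{i,j}=\sum_k u_{i,k}\otimes u_{k,j}$ yields exactly the desired $\Delta$.

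So the two substantive checks are: (i) the matrix $w=(w_{i,j})$ with $w_{i,j}=\sum_k u_{i,k}\otimes u_{k,j}$ is unitary in $M_n(\A\otimes_{\min}\A)$; and (ii) $(x_1,\dots,x_n)$ is $(\A\otimes_{\min}\A)$-invariant with respect to the coaction built from the $w_{i,j}$. For (i), note that $w$ is the matrix product $u_{12}u_{13}$ in the standard leg-numbering notation — more precisely, $w = (u\otimes 1)(1\otimes u)$ viewed appropriately in $M_n(\A)\otimes\A$ — and a product of unitaries is unitary; this is a routine computation with the relations $\sum_k u_{i,k}u_{j,k}^* = \sum_k u_{k,i}^*u_{k,j}=\delta_{i,j}1$ that hold because $u$ is unitary over $\A$. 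For (ii), I would use the coassociativity-type identity: the coaction $\alpha_w$ associated to $w$ equals $(\alpha_\A\otimes\mathrm{id}_\A)\circ\alpha_\A$ after identifying $\B\langle X_i\rangle\otimes\A\otimes\A$ appropriately, which follows by comparing both sides on the generators $X_i$ (each gives $\sum_{k,l}X_l\otimes u_{l,k}\otimes u_{k,i}$) and on $b\in\B$. Then $(\mu_x\otimes\mathrm{id}\otimes\mathrm{id})(\alpha_w(p)) = (\mathrm{id}_\A\otimes\mathrm{id}_\A)\big((\mu_x\otimes\mathrm{id}_\A)\alpha_\A\otimes\mathrm{id}_\A\big)(\alpha_\A(p))$; applying $\A$-invariance of $(x_i)$ to the inner expression collapses it to $\mu_x(\cdot)1_\A\otimes 1_\A$ after a second application of invariance, giving $\mu_x(p)1_{\A\otimes\A}$ as required. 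One should be slightly careful that $\A$-invariance is stated for polynomials $p\in\B\langle X_1,\dots,X_n\rangle$, so it suffices to check everything on such $p$, and both sides are continuous/algebraic in $p$.

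The main obstacle I anticipate is bookkeeping rather than conceptual: carefully setting up the leg-numbering identifications so that the identity $\alpha_w = (\alpha_\A\otimes\mathrm{id}_\A)\circ\alpha_\A$ is literally correct (the indices in $w_{i,j}=\sum_k u_{i,k}\otimes u_{k,j}$ must be matched against the order in which the two tensor legs of $\A\otimes_{\min}\A$ are consumed by the iterated coaction), and making sure the minimal tensor product causes no trouble — it does not, because $\alpha_\A$ lands in the \emph{algebraic} tensor product $\B\langle X_i\rangle\otimes\A$ and we only ever multiply finitely many group-like elements, so no completion issues arise; the $C^*$-homomorphism property of $\Delta$ is then automatic from the universal property of $\A$. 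A secondary point worth stating explicitly is why $\bar u$ is invertible: from $u^*u = uu^* = 1$ in $M_n(\A)$ one reads off that $\bar u = (u_{i,j}^*)$ satisfies $\bar u\, u^t = u^t\bar u = 1$, so $\bar u$ is invertible with inverse $u^t$ — this is needed to conclude $\A$ is a compact matrix quantum group in the sense of Definition \ref{quantum groups}, and it is where unitarity of $u$ (rather than mere invertibility) is used.
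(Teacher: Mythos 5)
Your proposal is correct and follows essentially the same route as the paper: verify that $w_{i,j}=\sum_{k} u_{i,k}\otimes u_{k,j}$ gives a unitary matrix and that $(x_1,\dots,x_n)$ remains invariant under the iterated coaction (your identity $\alpha_w=(\alpha_\A\otimes\mathrm{id}_\A)\circ\alpha_\A$ is exactly the paper's double-sum computation, applying $\A$-invariance twice), and then invoke the universal property of $\A$ to obtain $\Delta$. The only detail the paper includes that you omit is the observation that $\A\neq 0$, witnessed by the trivial representation $u_{i,j}\mapsto\delta_{i,j}1$, which is needed so that the universal object is not vacuous.
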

\begin{proof}
To establish that $\A$ forms a compact quantum group, we need to show that the corepresentation $u = (u_{i,j})$ is non-empty and that the map $\Delta(u_{i,j}) = \sum_{k=1}^n u_{i,k} \otimes u_{k,j}$ defines a coproduct on $\A$.  

Since $\A$ contains the trivial representation obtained by sending $u_{i,j}$ to $\delta_{i,j}$, $u = (u_{i,j})$ is non-empty.

Secondly, following the definition of Wang's quantum unitary groups, $(\sum\limits_{k=1}^n u_{i,k}\otimes u_{k,j} )_{i,j=1,\cdots, n}$  is obviously unitary in $\A\otimes M_n(\C)$ .

Let $(d_1,\cdots,d_m)$ be a sequence of $m$ elements from $\{1,*\}$.  Then, we have 
$$
\begin{aligned}
&b_0 x^{d_1}_{j_1} b_1 x^{d_2}_{j_2} \cdots x^{d_m}_{j_m} b_m \otimes (\sum\limits_{k_1=1}^n u_{j_1,k_1}\otimes u_{k_1,i_1})^{d_1}\cdots(\sum\limits_{k_1=1}^n u_{j_m,k_m}\otimes u_{k_m,i_m})^{d_m}\\
=&\sum\limits_{k_1,\cdots,k_m=1}^n  b_0 x^{d_1}_{j_1} b_1 x^{d_2}_{j_2} \cdots x^{d_m}_{j_m} b_m \otimes \left( u^{d_1}_{j_1,k_1}\cdots u^{d_m}_{j_m,k_m}\right)\otimes \left( u^{d_1}_{k_1,i_1}\cdots u^{d_m}_{k_m,i_m}\right)
\end{aligned}
$$
and 
$$
\begin{aligned}
 & \mathbb{E}\otimes Id_{\A\otimes \A} \left[   \sum\limits_{j_1,\cdots,j_m, k_1,\cdots,k_m=1}^n  b_0 x^{d_1}_{j_1} b_1 x^{d_2}_{j_2} \cdots x^{d_m}_{j_m} b_m \otimes \left( u^{d_1}_{j_1,k_1}\cdots u^{d_m}_{j_m,k_m}\right)\otimes \left( u^{d_1}_{k_1,i_1}\cdots u^{d_m}_{k_m,i_m}\right) \right]\\
= & \mathbb{E}\otimes Id_{\A\otimes \A} \left[   \sum\limits_{ k_1,\cdots,k_m=1}^n  b_0 x^{d_1}_{k_1} b_1 x^{d_2}_{k_2} \cdots x^{d_m}_{k_m} b_m \otimes 1_{\A}\otimes \left( u^{d_1}_{k_1,i_1}\cdots u^{d_m}_{k_m,i_m}\right)   \right]\\
= & \mathbb{E}\otimes Id_{\A\otimes \A} \left[   \sum\limits_{ k_1,\cdots,k_m=1}^n  b_0 x^{d_1}_{i_1} b_1 x^{d_2}_{i_2} \cdots x^{d_m}_{i_m} b_m \otimes 1_{\A}\otimes 1_{\A}  \right].\\
\end{aligned}
$$

Therefore, the family $\{\sum\limits_{k=1}^n u_{i,k}\otimes u_{k,j} \}$ is $(x_i)_{i=1,\cdots,n}$-invariant. By the universality of $\A$, this family induces a $*$-homomorphism $\Delta: \A \to \A \otimes_{\min} \A$ defined by
$$
\Delta(u_{i,j}) = \sum_{k=1}^n u_{i,k} \otimes u_{k,j},
$$
which shows that $\Delta$ is a coproduct on $\A$.
\end{proof}

Recall that a quantum semigroup is a $C^*$-algebra $\mathcal{A}$ equipped with a $*$-homomorphism $\Delta \in \operatorname{Mor}(\mathcal{A}, \mathcal{A} \otimes \mathcal{A})$, known as comultiplication, such that
$$
\left(\Delta \otimes i d_{\mathcal{A}}\right) \Delta=\left(i d_{\mathcal{A}} \otimes \Delta\right) \Delta .
$$
Once the universal condition that $u = (u_{i,j})$ is unitary, as specified in Proposition \ref{x-invariant algebra}, is removed, the resulting universal $C^*$-algebra forms a quantum semigroup.

In the following, we will show that the condition for $u=(u_{i,j})$ to be unitary in $\A\otimes M_n(\C)$ automatically holds in certain cases, including many interesting and well-known examples arising from free probability.

\begin{definition}
A sequence $(x_i)_{i=1,\cdots,n}$ in a $\range$-valued probability space  $(\M, \E)$ is 2-exchangeable if their first and second moments are exchangeable. That is
$$\E[x_i]=\E[x_{\sigma(i)}],\quad \E[x_ib x_j]=\E[x_{\sigma(i)}b x_{\sigma(j)}]$$
for all $\sigma\in S_n$, $1\leq i,j\leq n$, $b\in\range$.
\end{definition}
In the language of joint distribution for noncommutative random variables, $(x_i)_{i=1,\cdots,n}$ in a $\range$-valued probability space  $(\M, \E)$ is 2-exchangeable if $$\mu [p]\otimes 1_{C(S,n)}=\mu \otimes id_{C(S,n)} (\alpha(p)),$$  for all $p\in \B\langle X_1,\cdots,X_n\rangle$ such the $\deg p\leq 2$.

\begin{lemma}\label{Unitary in M(A)}
Let $\A$ be a unital $C^*$-algebra, and $\{u_{i,j}|{i,=1,\cdots, n}\}\subset \A$ such that 
$$\sum\limits_{k=1}^n  u_{k,i}u^*_{k,j}=\sum\limits_{k=1}^n  u^*_{k,i}u_{k,j}=\delta_{i,j }1_\A$$ for all $i,j$.
Then, $u=(u_{i,j})$ is unitary in $ M_n(\A)$.
 \end{lemma}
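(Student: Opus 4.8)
The claim is the standard fact that a matrix of elements of a unital $C^*$-algebra whose columns are orthonormal with respect to both $uu^*$ and $u^*u$ pairings is a unitary; I would prove it by a short Hilbert-module / finite-dimensional linear-algebra argument. First I would reformulate the two hypotheses as statements about the matrix $u = (u_{i,j}) \in M_n(\A)$ and its adjoint $u^* = (u_{j,i}^*)$ in $M_n(\A) = \A \otimes M_n(\C)$. The identity $\sum_k u_{k,i}^* u_{k,j} = \delta_{i,j} 1_\A$ says precisely that $(u^*u)_{i,j} = \delta_{i,j}1_\A$, i.e.\ $u^*u = 1_{M_n(\A)}$. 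So the content of the lemma is: given the other identity $\sum_k u_{k,i} u_{k,j}^* = \delta_{i,j}1_\A$, one also gets $uu^* = 1_{M_n(\A)}$, whence $u$ is unitary.

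The key step is that in a unital $C^*$-algebra (indeed in any unital ring), finite-dimensional one-sided inverses are two-sided. Concretely, I would argue as follows. From $u^*u = 1$ we know $u^*$ is a left inverse of $u$ in $M_n(\A)$. The second hypothesis $\sum_k u_{k,i} u_{k,j}^* = \delta_{i,j}1_\A$ is the statement $(u^t)^* \, u^t = 1$ where $u^t = (u_{j,i})$ is the transpose; equivalently $\overline{u}\, \overline{u}^{\,*} = 1$ with $\overline{u} = (u_{i,j}^*)$, i.e.\ $u$ has $u^*$ as a \emph{right} inverse as well once one transposes back. More directly: taking adjoints of $\sum_k u_{k,i}u_{k,j}^* = \delta_{i,j}1_\A$ gives $\sum_k u_{k,j} u_{k,i}^* = \delta_{i,j}1_\A$, which is $(u u^*)$ computed with a transpose — so it is cleanest to phrase everything through the transpose $u^t$: hypothesis one is $u^*u = 1$, hypothesis two is $\overline{u}^{\,*}\,\overline{u} = 1$ where $\overline{u} = (u^*_{i,j})$. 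Since $M_n(\A)$ is a unital $C^*$-algebra, an element with a left inverse and whose adjoint also has a left inverse is invertible, hence unitary. I would spell this out: $u^*u = 1$ makes $u$ left-invertible; $\overline{u}^{\,*}\overline{u} = 1$ makes $\overline{u}$ left-invertible, but $\overline{u} = (u^*_{i,j}) = (u^t)^*$, so $(u^t)^*$ is left-invertible, i.e.\ $u^t(u^t)^{**}\cdots$ — to avoid the transpose bookkeeping, simply: $\overline u^{\,*}\overline u = 1$ is equivalent, by taking the overall adjoint in $M_n(\A)$, to $u^t (u^t)^* = 1$, i.e.\ $u^t$ is right-invertible; combined with $u^*u=1$ (adjoint: $u^*{}^{t}\cdot$...). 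The genuinely clean route is: $u^*u = 1$ and, from hypothesis two, $uu^* = 1$ after transposing indices, and these two together are literally the definition of $u$ being unitary.

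So the main steps, in order, are: (1) rewrite both hypotheses as matrix identities in $M_n(\A)$, obtaining $u^*u = 1_{M_n(\A)}$ from one hypothesis and, via transpose/adjoint manipulation, $uu^* = 1_{M_n(\A)}$ from the other; (2) conclude $u$ is unitary in $M_n(\A)$. The only subtlety — and the one place a careless argument can go wrong — is the index bookkeeping in step (1): one must check that $\sum_k u_{k,i}u^*_{k,j} = \delta_{i,j}1$ really does assemble into $uu^* = 1$ after the correct transposition of row/column indices, rather than into some twisted identity. I expect this to be entirely routine, so there is no serious obstacle; the lemma is essentially a bookkeeping exercise wrapped around the trivial observation that a product of two $n\times n$ matrices over $\A$ equalling the identity in both orders is the definition of unitarity. (If one wanted to derive $uu^*=1$ from $u^*u=1$ \emph{alone}, one would need to invoke finiteness of $M_n(\A)$ as a $C^*$-algebra; but here the second hypothesis is given precisely so that no such appeal is needed.)
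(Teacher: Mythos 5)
There is a genuine gap, and it sits exactly at the point you flag as ``entirely routine'' index bookkeeping. Write out what the two hypotheses actually say as matrix identities. Since $(u^*u)_{i,j}=\sum_k u_{k,i}^*u_{k,j}$, the second sum gives $u^*u=1_{M_n(\A)}$, as you say. But the first sum, $\sum_k u_{k,i}u_{k,j}^*=\delta_{i,j}1_\A$, is $(\bar u^{\,*}\bar u)_{i,j}$ where $\bar u=(u_{i,j}^*)$ is the \emph{entrywise} adjoint (equivalently $u^t(u^t)^*=1$): it is again a sum over the \emph{row} index, i.e.\ a second orthonormality condition on the columns. The conclusion $uu^*=1$, by contrast, reads $\sum_k u_{i,k}u_{j,k}^*=\delta_{i,j}1_\A$ --- a sum over the \emph{column} index. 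No transposition or relabelling of indices turns the hypothesis into this; and $\bar u$ is not $u^*$ (they differ by a transpose), so the hypothesis does not say that $u^*$ has a left inverse, which is what your ``left inverse on both sides'' argument would need. Moreover, over a noncommutative $\A$ the transpose on $M_n(\A)$ is not an anti-homomorphism, so one cannot pass formally from ``$u^t$ is a co-isometry'' to ``$u$ is a co-isometry.'' Your closing parenthetical is exactly backwards: the second hypothesis does \emph{not} hand you $uu^*=1$ directly, and a finiteness-type appeal is still required.

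The paper's proof supplies precisely that missing ingredient, via positivity. From $u^*u=1$, $p=uu^*$ is a projection, so $1-p\geq 0$. The sum of the diagonal entries of $p$ is $\sum_{i}\sum_{j}u_{i,j}u_{i,j}^*=\sum_{j}\sum_{i}u_{i,j}u_{i,j}^*=n1_\A$, using the first hypothesis with $i=j$. Hence the diagonal entries of the positive element $1-p$ sum to $0$; each is positive, so each vanishes, and a positive matrix over a $C^*$-algebra with vanishing diagonal is zero (write $1-p=q^*q$ and note $(1-p)_{ii}=\sum_k q_{k,i}^*q_{k,i}$). Thus $uu^*=1$. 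This trace-over-the-diagonal argument is not cosmetic: the statement ``$u$ and $\bar u$ both isometries $\Rightarrow$ $u$ unitary'' is a standard but genuinely non-formal fact in the compact quantum group setting, and your proposal as written does not prove it.
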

\begin{proof}

Notice that  $u^*u=1_{M_n(\A)}$,  $u$ is a partial isometry in $M_n(\A)$. If $uu^*< 1_{M_n(\A)}$, then the sum of the diagonal elements of  $ uu^*$ is less than $n1_\A$. However,  the sum of the diagonal elements of  $ uu^*$ is given by
$$\sum\limits_{i=1}^n \sum\limits_{j=1}^n u_{i,j}u_{i,j}^* =\sum\limits_{j=1}^n \sum\limits_{i=1}^n  u_{i,j}u_{i,j}^*=n1_\A.$$
Therefore, $u=(u_{i,j})$ is unitary in $ M_n(\A)$.

\end{proof}

\begin{proposition}\label{unitary condition}
Let $(x_i)_{i=1,\cdots,n}$ be a sequence of 2-exchangeable random variables in a $\B$-valued probability space $(\M, \E)$  and $\A$ be a unital   $C^*$-algebra generated by $\{u_{i,j}|{i,=1,\cdots, n}\}$ such that $(x_i)_{i=1,\cdots,n}$ is $\A$-invariant. 
Suppose  that  one of the following conditions holds:
\begin{enumerate}
\item  $\E[x_1]\neq 0$.
\item $\E[x_1]=0$ and $\E[x_1bx_2]=0$ for all $b\in \B$.
\end{enumerate}
Assume further that $(x_i)_{i=1,\cdots,n}$ are nonzero and not identical such that for $i\neq j$, there exists $b\in \B$ such that $ \E[x_ibx_i^*]\neq \E[x_ibx_j^*] $. Then, the matrix $u=(u_{i,j})$ is unitary in $M_n(\A)$.
\end{proposition}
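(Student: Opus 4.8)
The plan is to invoke Lemma~\ref{Unitary in M(A)}: it suffices to establish the two families of identities
\[
\sum_{k=1}^n u_{k,i}u_{k,j}^* = \delta_{i,j}1_{\A}, \qquad \sum_{k=1}^n u_{k,i}^*u_{k,j} = \delta_{i,j}1_{\A} \qquad (1\le i,j\le n),
\]
and I will read these off from the $\A$-invariance of monomials of degree at most $2$. The engine is as follows. For a monomial $p = b_0X_i^{d_1}b_1X_j^{d_2}b_2$ of degree $\le 2$, the element $(\mu\otimes id_{\A})(\alpha(p))$ is a sum of terms $\E[b_0x_k^{d_1}b_1x_l^{d_2}b_2]\otimes u_{k,i}^{d_1}u_{l,j}^{d_2}$ (with the obvious degree-one and degree-zero analogues); by $2$-exchangeability every such second-order $*$-moment depends only on the pattern $(d_1,d_2)\in\{1,*\}^2$ and on whether $k=l$, so writing $p_{d_1,d_2}(b)=\E[x_1^{d_1}bx_1^{d_2}]$ and $q_{d_1,d_2}(b)=\E[x_1^{d_1}bx_2^{d_2}]$ the invariance identity collapses to an equation in $\B\otimes\A$ whose $\B$-coefficients are built from the $p$'s and $q$'s and whose $\A$-coefficients are the sums we want to control. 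Finally, in $\B\otimes\A$ one has $m\otimes c=0\Rightarrow c=0$ whenever $m\ne 0$ (apply a linear functional on $\B$ nonvanishing at $m$), so a nonzero $\B$-coefficient may be cancelled. The coefficients that must not vanish are $p_{1,*}(1)-q_{1,*}(1)$, $p_{*,1}(1)-q_{*,1}(1)$, and $p_{1,*}(1)=\E[x_1x_1^*]$, $p_{*,1}(1)=\E[x_1^*x_1]$; since $\E$ is faithful and the $x_i$ are nonzero and not identical, $x_1x_1^*$, $x_1^*x_1$ have nonzero expectation, and $\E[(x_1-x_2)(x_1-x_2)^*]=2\bigl(p_{1,*}(1)-q_{1,*}(1)\bigr)$, $\E[(x_1-x_2)^*(x_1-x_2)]=2\bigl(p_{*,1}(1)-q_{*,1}(1)\bigr)$ are nonzero — this is exactly the content of the non-degeneracy hypothesis $\E[x_ibx_i^*]\ne\E[x_ibx_j^*]$ (and its $*$-companion).

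In case (1), $\E[x_1]\ne 0$, so $\A$-invariance applied to $X_i$ and $X_i^*$ gives $\E[x_1]\otimes(\sum_k u_{k,i}-1_{\A})=0$ and its adjoint, hence $\sum_k u_{k,i}=\sum_k u_{k,i}^*=1_{\A}$ for every $i$. Feeding this into the degree-two identity for $p=X_ibX_j^*$, the cross sum $\sum_{k\ne l}u_{k,i}u_{l,j}^* = (\sum_k u_{k,i})(\sum_l u_{l,j}^*)-\sum_k u_{k,i}u_{k,j}^* = 1_{\A}-\sum_k u_{k,i}u_{k,j}^*$, so the identity becomes
\[
(p_{1,*}(b)-q_{1,*}(b))\otimes\sum_{k=1}^n u_{k,i}u_{k,j}^* \; + \; q_{1,*}(b)\otimes 1_{\A} \; = \; \E[x_ibx_j^*]\otimes 1_{\A};
\]
taking $b=1$ and cancelling $p_{1,*}(1)-q_{1,*}(1)$ (using $\E[x_ibx_j^*]=q_{1,*}(b)$ for $i\ne j$ and $=p_{1,*}(b)$ for $i=j$) yields $\sum_k u_{k,i}u_{k,j}^*=\delta_{i,j}1_{\A}$. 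The identical computation with $p=X_i^*bX_j$ and the coefficient $p_{*,1}(1)-q_{*,1}(1)$ gives $\sum_k u_{k,i}^*u_{k,j}=\delta_{i,j}1_{\A}$, and Lemma~\ref{Unitary in M(A)} finishes case (1).

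In case (2), $\E[x_1]=0$, so the degree-one identities are vacuous and the column sums $\sum_k u_{k,i}$ are a priori uncontrolled; instead one uses the vanishing of the second-order covariance, i.e.\ $q_{d_1,d_2}\equiv 0$ for every pattern $(d_1,d_2)$ (conjugation already gives $q_{*,*}\equiv 0$ from $q_{1,1}\equiv 0$; the remaining vanishings $q_{1,*}\equiv q_{*,1}\equiv 0$ are the rest of "zero covariance," satisfied for instance by circular and complex Gaussian families). Then every $k\ne l$ term in the degree-two identity for $X_ibX_j^*$ disappears, so it already reads $p_{1,*}(b)\otimes\sum_k u_{k,i}u_{k,j}^*=\E[x_ibx_j^*]\otimes 1_{\A}$, which is $p_{1,*}(b)\otimes 1_{\A}$ for $i=j$ and $0$ for $i\ne j$; cancelling the nonzero $p_{1,*}(1)=\E[x_1x_1^*]$ gives $\sum_k u_{k,i}u_{k,j}^*=\delta_{i,j}1_{\A}$, and the $X_i^*bX_j$ version gives the conjugate family, so Lemma~\ref{Unitary in M(A)} again applies.

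The step I expect to be the main obstacle is precisely case (2): with no first-order information there is no handle on the elements $\sum_k u_{k,i}$, and a bare degree-two identity mixes the target $\sum_k u_{k,i}u_{k,j}^*$ with the product $(\sum_k u_{k,i})(\sum_l u_{l,j}^*)$; the argument closes only because the zero-covariance hypothesis kills the coefficient of the latter product outright (and it would genuinely fail for a mean-zero family with nontrivial cross-covariance). Two subsidiary points also need care: justifying that $2$-exchangeability really controls all degree-$\le 2$ $*$-moments, not merely those without adjoints, and extracting non-degeneracy of the $\B$-coefficients from faithfulness of $\E$ together with non-identicality via the two positive elements $\E[(x_1-x_2)(x_1-x_2)^*]$ and $\E[(x_1-x_2)^*(x_1-x_2)]$.
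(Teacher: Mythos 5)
Your proof is correct and follows essentially the same route as the paper's: reduce to the two identities $\sum_k u_{k,i}u_{k,j}^*=\delta_{i,j}1_{\A}$ and $\sum_k u_{k,i}^*u_{k,j}=\delta_{i,j}1_{\A}$ via Lemma~\ref{Unitary in M(A)}, obtain $\sum_k u_{k,i}=1_{\A}$ from the first moments in case (1) and then split the degree-two invariance identity into the $k=l$ and $k\neq l$ parts, and in case (2) use the vanishing covariance to kill the cross terms before cancelling the nonzero $\B$-coefficient. The only (harmless) differences are organizational: you derive the non-degeneracy at $b=1$ from faithfulness of $\E$ and non-identicality via $\E[(x_1-x_2)(x_1-x_2)^*]$ rather than invoking the existential $b$ in the hypothesis, and you explicitly flag the same tacit strengthenings (exchangeability of $*$-moments and the vanishing of $\E[x_1bx_2^*]$ in case (2)) that the paper's own proof also relies on.
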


\begin{proof}
Case (1):   
Since  $\E\left[x_i\right] \otimes 1_A=\sum\limits_{k=1}^n \E\left[x_k\right] \otimes u_{k, i}$ and   $\E[x_i]=\E[x_j]\neq 0$ for all $i\neq j$,
we have 
$\sum\limits_{k=1}^n  u_{k, i}=1_\A$ for  all $i=1,..,n$. It follows that $\sum\limits_{k,l=1}^n  u_{k,i}u^*_{l,j}=(\sum\limits_{k=1}^n  u_{k,i} )(\sum\limits_{l=1}^n u^*_{l,j}) =1_\A,$ for all $i,j=1,\cdots,n$.

Notice that 
$$
\begin{aligned}
  \E[x_ibx_i^*]\otimes 1_\A&=\sum\limits_{k,l=1}^n \E[x_kbx_l^*]\otimes u_{k,i}u^*_{l,i}\\
  &=\sum\limits_{k=1}^n \E[x_kbx_k^*]\otimes u_{k,i}u^*_{k,i}+\sum\limits_{k,l=1,k\neq l}^n \E[x_kbx_l^*]\otimes u_{k,i}u^*_{l,i}\\
  &=\sum\limits_{k=1}^n \E[x_1bx_1^*]\otimes u_{k,i}u^*_{k,i}+\sum\limits_{k,l=1,k\neq l}^n \E[x_1bx_2^*]\otimes u_{k,i}u^*_{l,i}\\ 
 \end{aligned}
$$
The last equality holds because  $(x_i)_{i=1,\cdots,n}$ is 2-exchangeable. Thus,  we have 
$$ 
 \begin{aligned}  
 \E[x_ibx_i^*]\otimes 1_\A &=\E[x_1bx_1^*]\otimes  \sum\limits_{k=1}^n  u_{k,i}u^*_{k,i}+\E[x_1bx_2^*]\otimes (\sum\limits_{k,l=1}^n  u_{k,i}u^*_{l,i}-\sum\limits_{k=1}^n  u_{k,i}u^*_{k,i})\\  
  &=\E[x_1bx_1^*]\otimes  \sum\limits_{k=1}^n  u_{k,i}u^*_{k,i}+\E[x_1bx_2^*]\otimes (1_\A-\sum\limits_{k=1}^n  u_{k,i}u^*_{k,i}),\\  
\end{aligned}
$$
Therefore, we have 
$$ \E[x_1bx_1^*]\otimes (1_\A-\sum\limits_{k=1}^n  u_{k,i}u^*_{k,i})=\E[x_1bx_2^*]\otimes (1_\A-\sum\limits_{k=1}^n  u_{k,i}u^*_{k,i})$$
for all $b\in \B.$  
Recall that there exists $b\in \B$ such that $ \E[x_1bx_1^*]\neq \E[x_1bx_2^*].$
This implies that $\sum\limits_{k=1}^n  u_{k,i}u^*_{k,i}=1_\A$ for all $i$. Similarly, we have $\sum\limits_{k=1}^n  u^*_{k,i}u_{k,i}=1_\A.$ 

On the other hand, for $i\neq j$,  we have 
$$
\begin{aligned}
  \E[x_ibx_j^*]\otimes 1_\A&=\sum\limits_{k,l=1}^n \E[x_kbx_l^*]\otimes u_{k,i}u^*_{l,j}\\
  &=\sum\limits_{k=1}^n \E[x_kbx_k^*]\otimes u_{k,i}u^*_{k,j}+\sum\limits_{k,l=1,k\neq l}^n \E[x_kbx_l^*]\otimes u_{k,i}u^*_{l,j}\\
  &=\E[x_1bx_1^*]\otimes  \sum\limits_{k=1}^n  u_{k,i}u^*_{k,j}+\E[x_1bx_2^*]\otimes (1_\A-\sum\limits_{k=1}^n  u_{k,i}u^*_{k,j}),\\  
\end{aligned}
$$
which implies that
$$\left(\E[x_1bx_1^*]-\E[x_1bx_2^*]\right)\otimes  \sum\limits_{k=1}^n  u_{k,i}u^*_{k,j}=0$$
for all $b\in \B$. It follows that  $\sum\limits_{k=1}^n  u_{k,i}u^*_{k,j}=0$.  Similarly, $\sum\limits_{k=1}^n  u^*_{k,i}u_{k,j}=0.$

Case (2):   If $\E[x_i]=0$ for all $i$ and $\E[x_ibx_j]=0$ for all $i\neq j, b\in \B$, then
$$
\begin{aligned}
  \E[x_ibx_i^*]\otimes 1_\A&=\sum\limits_{k,l=1}^n \E[x_kbx_l^*]\otimes u_{k,i}u^*_{l,i}\\
  &=\sum\limits_{k=1}^n \E[x_kbx_k^*]\otimes u_{k,i}u^*_{k,i}+\sum\limits_{k,l=1,k\neq l}^n \E[x_kbx_l^*]\otimes u_{k,i}u^*_{l,i}\\
  &=\sum\limits_{k=1}^n \E[x_1bx_1^*]\otimes u_{k,i}u^*_{k,i}
\end{aligned}
$$ 
Thus $\sum\limits_{k=1}^n  u_{k,i}u^*_{k,i}=1_\A$ for all $i$. Similarly, we have $\sum\limits_{k=1}^n  u^*_{k,i}u_{k,i}=1_\A.$ 

On the other hand, for $i\neq j$ and $b\in \B$, we have 
$$
\begin{aligned}
  0=\E[x_ibx_j^*]\otimes 1_\A&=\sum\limits_{k,l=1}^n \E[x_kbx_l^*]\otimes u_{k,i}u^*_{l,j}\\
  &=\sum\limits_{k=1}^n \E[x_kbx_k^*]\otimes u_{k,i}u^*_{k,j}+\sum\limits_{k,l=1,k\neq l}^n \E[x_kbx_l^*]\otimes u_{k,i}u^*_{l,j}\\
  &=\E[x_1bx_1^*]\otimes  \sum\limits_{k=1}^n  u_{k,i}u^*_{k,j}\\  
\end{aligned}
$$
It follows that  $\sum\limits_{k=1}^n  u_{k,i}u^*_{k,j}=0$.  Similarly, $\sum\limits_{k=1}^n  u^*_{k,i}u_{k,j}=0.$

By Lemma \ref{Unitary in M(A)},  $u=(u_{i,j})$ is unitary in $M_n(\A)$.
\end{proof}

It is well known that $q$-Gaussian Brownian motions \cite{BS}, mixed $q$-Gaussian Brownian motions \cite{Sp1993}, and even random variables in a more general deformed commutation relations \cite{BS1994} satisfy the second condition in Proposition \ref{unitary condition}. As a result, the maximal distributional symmetries for these examples arise from quantum unitary groups. 

Another interesting example is that of identically distributed $\epsilon$-free independent \cite{Woj, Sp-Wy} random variables, which satisfy the conditions in Proposition \ref{unitary condition}. Therefore, their distributional symmetries also arise from quantum unitary groups. In fact, the maximal distributional symmetry quantum group for these examples at least contains Speicher and Weber's quantum groups with partial commutation relations \cite{Sp-We}.

On the other hand, every compact quantum group $\A$ admits a Haar state $h$ on $\A$ such that 
 for any sate $\rho$ on $\A$ one has $(\rho \otimes h) \Delta=(h \otimes \rho) \Delta=\rho(I) h$ as established in \cite{Van, Wo2}. This property extends to the corepresentation  $u=(u_{i,j})$ of $\A$.  Fix $j$, consider the algebra $\M_j$ generated by $\{u_{i,j}|i=1,\cdots,k\}$, with Haar state of $\A$ restricted to $\M$. Then, $\{u_{i,j}|i=1,\cdots,k\}$ is $\A$-invariant. Consequently, every corepresentation is contained within a quantum semigroup arising from the maximal distributional symmetry associated with a sequence of random variables. In summary, it seems that we can establish a probabilistic framework for classifying compact quantum groups.

\section{Universal conditions for quantum groups}


In this section, we will study the universal conditions satisfied by generator of a $C^*$-algebra so that make the $C^*$-algebra a compact quantum group.
Let $\A$ be a $C^*$-algebra generated by $n^2$ elements $\{u_{i,j}|i,j=1,\cdots,n\}$ such that $u=(u_{i,j})$ is unitary.  
According to Definition \ref{quantum groups}, $\A$ is a quantum group if the map $u_{i,j}\rightarrow \sum\limits_{k=1}^n u_{i,k}\otimes u_{k,j}$ extends to a homomorphism from $\A$ to $\A\otimes \A$.

\begin{lemma}\label{sum condition} Let $\A$ be a $C^*$-algebra,  $\{u_{i,j}|i,j=1,\cdots,n\}\in \A$, $(d_1,\cdots,d_m)\in \{1,*\}^m$ and $v_{i,j}=\sum\limits_{k=1}^n u_{i,k}\otimes u_{k,j}\in\A\otimes_{\min}\A$. If
$$\sum\limits_{k=1}^n u^{d_1}_{k,i_1} u^{d_2}_{k, i_2} \cdots u^{d_m}_{k, i_m}  =\delta_{i_1, i_2} \cdots \delta_{i_{m-1}, i_m}1_\A$$
for all $i_1,\cdots,i_m\in [n]$,
then $$\sum\limits_{k=1}^n v^{d_1}_{k,i_1} v^{d_2}_{k, i_2} \cdots v^{d_m}_{k, i_m}  =\delta_{i_1, i_2} \cdots \delta_{i_{m-1}, i_m} 1_\A\otimes 1_\A$$
for all $i_1,\cdots,i_m\in [n]$.
\end{lemma}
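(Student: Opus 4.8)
The plan is to compute $\sum_{k=1}^n v^{d_1}_{k,i_1}\cdots v^{d_m}_{k,i_m}$ directly by expanding each $v_{k,i}=\sum_{l} u_{k,l}\otimes u_{l,i}$ and then grouping the factors on the two legs of the tensor product. Writing $v^{d}_{k,i}=\sum_{l=1}^n u^{d}_{k,l}\otimes u^{d}_{l,i}$ (using that $(a\otimes b)^*=a^*\otimes b^*$), we get
$$\sum_{k=1}^n v^{d_1}_{k,i_1}\cdots v^{d_m}_{k,i_m}=\sum_{k=1}^n\ \sum_{l_1,\dots,l_m=1}^n \bigl(u^{d_1}_{k,l_1}\cdots u^{d_m}_{k,l_m}\bigr)\otimes\bigl(u^{d_1}_{l_1,i_1}\cdots u^{d_m}_{l_m,i_m}\bigr).$$
The key move is to push the sum over $k$ inside, so that it acts only on the first leg: for each fixed tuple $(l_1,\dots,l_m)$ the inner sum $\sum_{k=1}^n u^{d_1}_{k,l_1}\cdots u^{d_m}_{k,l_m}$ is exactly the hypothesis applied to the indices $(l_1,\dots,l_m)$, hence equals $\delta_{l_1,l_2}\cdots\delta_{l_{m-1},l_m}1_\A$.

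Substituting this in, the only surviving terms are those with $l_1=l_2=\cdots=l_m=:l$, so the expression collapses to
$$\sum_{l=1}^n 1_\A\otimes\bigl(u^{d_1}_{l,i_1}u^{d_2}_{l,i_2}\cdots u^{d_m}_{l,i_m}\bigr)=1_\A\otimes\sum_{l=1}^n u^{d_1}_{l,i_1}u^{d_2}_{l,i_2}\cdots u^{d_m}_{l,i_m}.$$
Now apply the hypothesis a second time, this time to the indices $(i_1,\dots,i_m)$ on the second leg: the inner sum equals $\delta_{i_1,i_2}\cdots\delta_{i_{m-1},i_m}1_\A$, and we obtain $\delta_{i_1,i_2}\cdots\delta_{i_{m-1},i_m}\,1_\A\otimes 1_\A$, as claimed.

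There is essentially no serious obstacle here; the statement is a bookkeeping identity. The one point that deserves a sentence of care is the interchange of the $\sum_k$ with the $\sum_{l_1,\dots,l_m}$ and the bilinearity used to factor $\bigl(\sum_k u^{d_1}_{k,l_1}\cdots u^{d_m}_{k,l_m}\bigr)\otimes(\cdots)$ out of the double sum --- all sums are finite, so this is just distributivity of multiplication and of $\otimes$ over finite sums in the $C^*$-algebra $\A\otimes_{\min}\A$. I should also note explicitly at the outset that $v^{d}_{k,i}=\sum_{l=1}^n u^{d}_{k,l}\otimes u^{d}_{l,i}$ holds for both $d=1$ and $d=*$, since the coproduct formula $v_{k,i}=\sum_l u_{k,l}\otimes u_{l,i}$ is compatible with taking adjoints entrywise. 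With these remarks the computation above is complete.
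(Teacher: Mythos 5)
Your proof is correct and follows essentially the same computation as the paper: expand each $v^{d}_{k,i}$, interchange the finite sums so that $\sum_k$ applies the hypothesis on the first tensor leg, collapse the surviving diagonal terms, and apply the hypothesis once more on the second leg. The remark that $v^{*}_{k,i}=\sum_l u^{*}_{k,l}\otimes u^{*}_{l,i}$ is the only point needing care, and you handle it correctly.
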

\begin{proof} By straightforward computation, we have 
$$
\begin{aligned}
&\sum\limits_{k=1}^n v^{d_1}_{k,i_1} v^{d_2}_{k, i_2} \cdots v^{d_m}_{k, i_m} \\
=&\sum\limits_{k=1}^n 
\left(\sum\limits_{j_1=1}^nu^{d_1}_{k, j_1}\otimes u^{d_1}_{j_1,i_1}\right)
\cdots
\left(\sum\limits_{j_m=1}^nu^{d_m}_{k, j_m}\otimes u^{d_m}_{j_m,i_m}\right)\\
=&
\sum\limits_{j_1=1}^n \cdots\sum\limits_{j_m=1}^n \sum\limits_{k=1}^n 
\left(u_{k, j_1}^{d_1}\cdots u^{d_m}_{k, j_m}\otimes u^{d_1}_{j_1,i_1}\cdots u^{d_m}_{j_m,i_m}\right)
 \\
 =&
\sum\limits_{j_1=1}^n \cdots \sum\limits_{j_m=1}^n 
\left(\delta_{j_1, j_2} \cdots \delta_{j_{m-1}, j_m}\otimes u^{d_1}_{j_1,i_1}\cdots u^{d_m}_{j_m,i_m}\right)\\
=&\sum\limits_{j=1}^n  \left(1_\A \otimes u^{d_1}_{j,i_1}\cdots u^{d_m}_{j,i_m}\right)\\
=&\delta_{i_1, i_2} \cdots \delta_{i_{m-1}, i_m}(1_\A\otimes 1_\A)
\end{aligned}
$$
\end{proof}

Lemma \ref{sum condition} provides lots of universal conditions for making $\A$ a quantum group. However, these conditions are not independent and we will show that each of these conditions can be reduced to one of the following cases.

\begin{proposition}
Let $\A$ be a $C^*$-algebra generated by $n^2$ elements $\{u_{i,j}|i,j=1,\cdots,n\}$ such that $u=(u_{i,j})$ is unitary in $M_n(\A)$.  If the elements $\{u_{i,j}|i,j=1,\cdots,n\}$ satisfy one of the following conditions, then
the set $\{\sum\limits_{k=1}^n u_{i,k}\otimes u_{k,j}|i,j=1,\cdots,n\}$ also satisfy the corresponding condition.
\begin{enumerate}
\item $\sum\limits_{k=1}^n u_{k, i_1} u_{k, i_2} =\delta_{i_1, i_2}$,  for all $i_1,,i_2\in [n]$
\item  $\sum\limits_{k=1}^n u_{k, i_1} u^*_{k, i_2} u_{k, i_3} u_{k, i_4}^*=\delta_{i_1, i_2} \cdots \delta_{i_3, i_4}$,  for all $i_1,\cdots,i_4\in [n]$
\item  $\sum\limits_{k=1}^n u_{k, i_1} u_{k, i_2} u_{k, i_3}^* u_{k, i_4}^*=\delta_{i_1, i_2} \cdots \delta_{i_3, i_4}$,  for all $i_1,\cdots,i_4\in [n]$
\item For fixed $m\in \mathbb{Z}^+$:$\sum\limits_{k=1}^n u_{k,i_1} u_{k, i_2} \cdots u_{k, i_m}  =\delta_{i_1, i_2} \cdots \delta_{i_{k-1}, i_k}$, for all $i_1,\cdots,i_m\in [n]$
\end{enumerate} 
\end{proposition}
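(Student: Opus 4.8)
The plan is to show that each of the four conditions is a special instance of (or reducible to) the general pattern covered by Lemma \ref{sum condition}, and then to invoke that lemma. Recall that Lemma \ref{sum condition} already establishes: if $\sum_{k=1}^n u^{d_1}_{k,i_1}\cdots u^{d_m}_{k,i_m}=\delta_{i_1,i_2}\cdots\delta_{i_{m-1},i_m}1_\A$ for \emph{some fixed} word $\dd=(d_1,\dots,d_m)\in\{1,*\}^m$ and all index tuples, then the same relation holds with $u$ replaced by $v=(v_{i,j})$, $v_{i,j}=\sum_k u_{i,k}\otimes u_{k,j}$. So the only thing to check is that each of conditions (1)--(4) has \emph{exactly} this shape for a suitable choice of $m$ and $\dd$:

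\begin{itemize}
\item Condition (4) is literally the hypothesis of Lemma \ref{sum condition} with $\dd=(1,1,\dots,1)$ of length $m$; condition (1) is the case $m=2$, $\dd=(1,1)$ of condition (4).
\item Condition (2) is the case $m=4$, $\dd=(1,*,1,*)$.
\item Condition (3) is the case $m=4$, $\dd=(1,1,*,*)$.
\end{itemize}
In each case the right-hand side $\delta_{i_1,i_2}\cdots\delta_{i_{m-1},i_m}1_\A$ matches the right-hand side in Lemma \ref{sum condition} verbatim. Hence, applying Lemma \ref{sum condition} with the appropriate $\dd$ immediately gives that $\{v_{i,j}\}$ satisfies the corresponding relation, which is precisely the claim. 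So the proof is essentially a one-line citation of Lemma \ref{sum condition} after observing that these four families are instances of its hypothesis; I would write it as: ``Each of the relations (1)--(4) is of the form treated in Lemma \ref{sum condition} for the word $\dd$ equal to $(1,1)$, $(1,*,1,*)$, $(1,1,*,*)$, and $(1,\dots,1)$ respectively, and the conclusion follows.''

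One should double-check the bookkeeping of the adjoints, since $v_{i,j}^* = \sum_k u_{i,k}^*\otimes u_{k,j}^*$ (as stated in Lemma \ref{sum condition}'s proof via $v^{d}_{k,i}$), and the factorization step in that proof uses the fact that $\left(\sum_{j} u^{d}_{k,j}\otimes u^{d}_{j,i}\right)$ is exactly $v^d_{k,i}$ for $d\in\{1,*\}$ — this is where it matters that taking adjoints distributes over the leg-wise tensor notation, which it does. The mild subtlety worth a sentence is that, strictly speaking, the statement of this proposition presupposes that $u=(u_{i,j})$ is unitary in $M_n(\A)$ so that each listed relation is consistent and $\A$ is a genuine $C^*$-algebra; unitarity of $u$ is not actually needed in the algebraic manipulation of Lemma \ref{sum condition} itself, but it is what guarantees (together with the relation in question) that $v$ is again unitary and hence that the resulting map $\Delta$ lands in $\A\otimes_{\min}\A$ and makes $\A$ a compact quantum group.

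The main ``obstacle'' here is therefore not a difficulty but rather a matter of presentation: there is nothing to prove beyond Lemma \ref{sum condition}, so the real content of the section is the \emph{reduction} claim implicit in the sentence preceding the proposition — namely that every relation obtainable from Lemma \ref{sum condition} (for an arbitrary word $\dd$) is equivalent, given unitarity of $u$, to one of these four normal forms (1)--(4). That reduction — collapsing an arbitrary alternating/non-alternating pattern of $1$'s and $*$'s down to $(1,1)$, $(1,*,1,*)$, $(1,1,*,*)$, or an all-$1$ string using the unitarity relations $\sum_k u_{k,i}u^*_{k,j}=\sum_k u^*_{k,i}u_{k,j}=\delta_{ij}$ — is the genuinely combinatorial part, and it is presumably carried out in the surrounding text; the proof of the proposition proper is the short invocation of Lemma \ref{sum condition} described above.
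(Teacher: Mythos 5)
Your proposal is correct and matches the paper's proof exactly: the paper also disposes of each case by a one-line application of Lemma \ref{sum condition} with $\dd=(1,1)$, $(1,*,1,*)$, $(1,1,*,*)$, and $(1,\dots,1)$ respectively. The additional remarks about adjoint bookkeeping and the role of unitarity are accurate but not needed for the proof itself.
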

\begin{proof}  
\begin{enumerate}
\item Apply Lemma \ref{sum condition}, by letting  $m=2$ and $\dd=(1,1)$.
\item Apply Lemma \ref{sum condition}, by letting  $m=4$ and $\dd=(1,*,1,*)$. 
\item Apply Lemma \ref{sum condition}, by letting  $m=4$ and $\dd=(1,1,*,*)$. 
\item  Apply Lemma \ref{sum condition}, by letting $d_i=1$ for all $i$. 
\end{enumerate}
\end{proof}

\begin{lemma}
Assume that $\A$ is a $C^*$-algebra generated by $n^2$ elements $\{u_{i,j}|i,j=1,\cdots,n\}$ such that $u=(u_{i,j})$ is unitary in $M_n(\A)$ and $\sum\limits_{k=1}^n u_{k, i_1} u_{k, i_2} =\delta_{i_1, i_2}$,  for all $i_1,,i_2\in [n]$. Then, $u_{i,j}=u_{i,j}^*$ for all $i,j=1,\cdots,n$.
\end{lemma}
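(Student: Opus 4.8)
The plan is to exploit the interaction between the two hypotheses: unitarity of $u=(u_{i,j})$ in $M_n(\A)$, which says $\sum_k u_{i,k}u_{j,k}^* = \delta_{i,j}1_\A$ and $\sum_k u_{k,i}^* u_{k,j} = \delta_{i,j}1_\A$ (rows and columns are orthonormal for the adjoint), and the extra orthogonality relation $\sum_k u_{k,i_1}u_{k,i_2} = \delta_{i_1,i_2}1_\A$, which is the same column relation but with the transpose in place of the adjoint. Morally, $u^t u = 1$ and $u^* u = 1$, so $u^t = u^*$, i.e. $\bar u = u$. First I would make this precise at the matrix level: let $v = (v_{i,j})$ be the matrix with $v_{i,j} = u_{i,j}^* $ (the entrywise conjugate $\bar u$). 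The unitarity hypothesis gives $u^* u = 1_{M_n(\A)}$, equivalently $v^t u = 1_{M_n(\A)}$ where $v^t$ has $(i,j)$-entry $u_{j,i}^*$; wait, I should be careful about which product the hypotheses encode.

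Let me restate carefully. The column-orthogonality-for-transpose hypothesis $\sum_k u_{k,i}u_{k,j} = \delta_{i,j}1_\A$ says exactly that $u^t u = 1_{M_n(\A)}$, where $u^t$ is the (non-conjugate) transpose. Since by Lemma \ref{Unitary in M(A)}-type reasoning or by the standing unitarity hypothesis $u$ is unitary, $u$ has a two-sided inverse $u^* = u^{-1}$; so from $u^t u = 1$ and $u u^{-1} = 1$ we conclude $u^t = u^{-1} = u^*$. Comparing $(i,j)$-entries of $u^t$ and $u^*$ gives $u_{j,i} = u_{i,j}^*$ for all $i,j$ --- which is the relation $\bar u = u$ rewritten, but not yet literally $u_{i,j} = u_{i,j}^*$. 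To upgrade to the diagonal statement as written I would then use the relation once more, or note that applying $u_{j,i} = u_{i,j}^*$ twice gives $u_{i,j} = u_{j,i}^* = (u_{i,j}^*)^* = u_{i,j}$, which is vacuous; so to get genuine self-adjointness of every entry I need a further input. The natural one is to also invoke that $\bar u = (u_{i,j}^*)$ is required to be invertible (part of being a compact matrix quantum group) together with a row version of the extra relation, $\sum_k u_{i,k}u_{j,k} = \delta_{i,j}1_\A$, i.e. $u u^t = 1$; combined with $u^t u = 1$ this makes $u^t$ a genuine two-sided inverse, but still only yields $u_{j,i} = u_{i,j}^*$.

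So the key realization is that the stated conclusion $u_{i,j} = u_{i,j}^*$ must already follow from $u^t = u^*$ \emph{together with} the fact that $u$ is unitary; indeed $u^t = u^* = u^{-1}$ forces $u$ to be an orthogonal matrix with entries in a $C^*$-algebra, and then $u = (u^t)^{-1} = (u^*)^{-1} = u$, still circular. The honest route: from $u^t = u^*$ we get $u_{i,j}^* = u_{j,i}$; now also $u$ unitary gives $u^* = u^{-1}$, and taking adjoints in $u^t u = 1$ gives $u^* (u^t)^* = 1$, i.e. $u^* \bar u = 1$ where $\bar u = (u_{i,j}^*)$; but $\bar u = u^t{}^* {}^t$... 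I will instead argue as follows, which I believe is what the authors intend: the relation $\sum_k u_{k,i}u_{k,j} = \delta_{i,j}$ combined with unitarity $\sum_k u_{k,i}^* u_{k,j} = \delta_{i,j}$ gives $\sum_k (u_{k,i} - u_{k,i}^*) u_{k,j} = 0$ for all $i,j$; multiplying on the right by $u_{k',j}^*$ and summing over $j$, using $\sum_j u_{k,j}u_{k',j}^* = \delta_{k,k'}$, yields $u_{k',i} - u_{k',i}^* = 0$ for all $k',i$. That is the clean computation.

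Concretely, here is the step order I would carry out. Step 1: record the three families of scalar relations available --- the hypothesis $\sum_k u_{k,i}u_{k,j}=\delta_{i,j}1_\A$, and (from unitarity of $u$ in $M_n(\A)$) both $\sum_k u_{k,i}^* u_{k,j} = \delta_{i,j}1_\A$ and $\sum_j u_{i,j}u_{k,j}^* = \delta_{i,k}1_\A$. Step 2: subtract the first two to get $\sum_k (u_{k,i}-u_{k,i}^*)\,u_{k,j} = 0$ in $\A$ for every $i,j$. Step 3: for fixed $i$ and fixed index $\ell$, multiply this identity on the right by $u_{\ell,j}^*$ and sum over $j=1,\dots,n$; the right-hand side is $0$, and the left-hand side becomes $\sum_k (u_{k,i}-u_{k,i}^*)\big(\sum_j u_{k,j}u_{\ell,j}^*\big) = \sum_k (u_{k,i}-u_{k,i}^*)\delta_{k,\ell}1_\A = u_{\ell,i}-u_{\ell,i}^*$. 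Step 4: conclude $u_{\ell,i} = u_{\ell,i}^*$ for all $\ell,i$, i.e. every generator is self-adjoint. The main (only) obstacle is making sure the bookkeeping of row versus column orthogonality relations is correct --- i.e. that the "right multiply and sum" in Step 3 really contracts against a \emph{row} orthogonality relation $\sum_j u_{k,j}u_{\ell,j}^* = \delta_{k,\ell}1_\A$ that is genuinely available from $u$ being unitary in $M_n(\A)$ (it is: $u u^* = 1_{M_n(\A)}$ is exactly this). There is no deeper difficulty; the proof is a two-line algebraic manipulation once the relations are lined up.
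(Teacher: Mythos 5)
Your final computation (Steps 1--4) is correct and is essentially the paper's proof: the paper writes $u^T u = 1$ hence $u^T = u^{-1} = u^*$ and compares entries, while you carry out the same cancellation entrywise by contracting $\sum_k (u_{k,i}-u_{k,i}^*)u_{k,j}=0$ against the row relation $\sum_j u_{k,j}u_{\ell,j}^*=\delta_{k,\ell}1_\A$. The long detour in the middle was unnecessary: it rests on the misreading $(u^*)_{i,j}=u_{i,j}^*$, whereas the matrix adjoint satisfies $(u^*)_{i,j}=(u_{j,i})^*$, so comparing $(i,j)$-entries of $u^T=u^*$ already yields $u_{j,i}=u_{j,i}^*$ directly, exactly as your entrywise calculation confirms.
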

\begin{proof}
 Notice that $\sum\limits_{k=1}^n u_{k, i_1} u_{k, i_2} =\delta_{i_1, i_2}$ is equivalent to  $u^Tu=1_n$, where $1_n$ is the identity of $M_n(\A)$. Therefore, $$(u_{j,i})=u^T=u^{-1}=u^*=(u_{j,i}^*).$$
 By comparing the entries of the matrices, we get $u_{i,j}=u_{i,j}^*$ for all $i,j=1,\cdots,n$.
 
\end{proof}

Based on the above relations between $u_{i,j}$ and $\sum\limits_{k=1}^n u_{i,j}$,  we have the following definitions for quantum groups.

\begin{definition}\label{quantum groups via vanishing conditions}
 Let $C(\qs,n)$, $C(\qo,n)$, $C(\qb_s,n)$, $C(\qh_s,n)$, $C(\qb,n) $, $C(\qh_m,n)$ with $m\geq 3$, $C(\qh_0,n)$ and $C(\rqh,n)$ be the universal $C^*$-algebras generated by $\{u_{i,j}|i,j=1,\cdots,n\})$, such that they are quantum subgroups of $C(\qu,n)$ and satisfy  the following conditions respectively:

Selfadjoint case: In the follow, we assume that $\sum\limits_{k=1}^n u_{k, i_1} u_{k, i_2} =\delta_{i_1, i_2}$,  for all $i_1,,i_2\in [n]$. Then, $u_{i,j}=u_{i,j}^*$ for all $i,j=1,\cdots,n$. 
 \begin{enumerate}
  \item $C(\qs,n)$:   $u_{i,j}$'s are orthogonal projections, namely, $u_{i,j}=u^*_{j,i}=u^2_{i,j}$ for all $i,j=1,\cdots,n$.
\item $C(\qo,n)$:    no other conditions to satisfy.
\item $C(\qb_s,n)$:   $\sum\limits_{k=1}^n u_{i,k}=\sum\limits_{k=1}^n u_{k,j}=1 $  for all $i,j=1,\cdots,n$.
\item $C(\qh_s,n)$:  $u^2_{i,j}$'s are orthogonal projections, for all $i,j=1,\cdots,n$.
 \end{enumerate}

Non-selfadjoint case: 
 \begin{enumerate}
\item $C(\qb,n) $:   $\sum\limits_{k=1}^n u_{i,k}=\sum\limits_{k=1}^n u_{k,j}=1 $  for all $i,j=1,\cdots,n$.
\item $C(\qh_m,n)$ with $m\geq 3$:$
\sum\limits_{k=1}^n u_{k,i_1} u_{k, i_2} \cdots u_{k, i_m}  =\delta_{i_1, i_2} \cdots \delta_{i_{m-1}, i_m}$, for all $i_1,\cdots,i_m\in [n]$.
\item $C(\qh_0,n)$:  $\sum\limits_{k=1}^n u_{k, i_1} u_{k, i_2} u_{k, i_3}^* u_{k, i_4}^*=\delta_{i_1, i_2} \cdots \delta_{i_3, i_4}$,  for all $i_1,\cdots,i_4\in [n]$.
\item $C(\rqh,n)$: $\sum\limits_{k=1}^n u_{k, i_1} u^*_{k, i_2} u_{k, i_3} u_{k, i_4}^*=\delta_{i_1, i_2} \cdots \delta_{i_3, i_4}$,  for all $i_1,\cdots,i_4\in [n]$.
\end{enumerate}  
\end{definition}

\begin{remark}
For $C(\qh_m,n)$ with $m=2$, we obtain $C(\qo,n)$; and with $m=1$, we obtain $C(\qb,n)$. The sub-index $s$ n the preceding definition indicates that the generators of the corresponding algebra are self-adjoint.
\end{remark}
According to the universal relations satisfied by the universal quantum groups mentioned above, we can establish the following relationship between them.

\begin{proposition}\label{quantum subgroup relations}
For Fixed $n\geq 1$, we have 
\begin{enumerate}
\item $ C(\qs,n)\subset C(\qb_s,n)\subset C(\qb,n)\subset C(\qu,n)$.
\item $ C(\qs,n)\subset C(\qh_s,n)\subset C(\qo,n)\subset C(\qu,n)$.
\item $ C(\qb_s,n)\subset C(\qo,n)$.
\item For $m\geq 3$: $ C(\qs,n)\subset C(\qh_m,n)\subset C(\qh_0,n)\subset C(\rqh,n)\subset C(\qu,n)$
\end{enumerate}
\end{proposition}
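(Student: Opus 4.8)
The plan is to verify each of the four claimed inclusions by checking the universal relations defining the smaller algebra are satisfied in the larger one, which—by the universality of the smaller $C^*$-algebra—produces the desired surjective $*$-homomorphism sending generators to generators. Since all of these algebras are defined as quantum subgroups of $C(\qu,n)$, each already carries the unitarity relation on $u=(u_{i,j})$, so I only need to compare the \emph{extra} vanishing/orthogonality relations listed in Definition \ref{quantum groups via vanishing conditions}.

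For the concrete comparisons I would argue as follows. For (1): in $C(\qs,n)$ the $u_{i,j}$ are orthogonal projections summing to $1$ along rows and columns; these clearly imply $\sum_k u_{i,k}=\sum_k u_{k,j}=1$, which is exactly the defining relation of $C(\qb_s,n)$, giving $C(\qs,n)\subset C(\qb_s,n)$. The self-adjoint-case relation $\sum_k u_{k,i_1}u_{k,i_2}=\delta_{i_1,i_2}$ together with $u^Tu=1$ forces the generators self-adjoint (the Lemma just above Definition \ref{quantum groups via vanishing conditions}), and the row/column sum relation persists, so $C(\qb_s,n)\subset C(\qb,n)$; and $C(\qb,n)\subset C(\qu,n)$ is immediate since $C(\qb,n)$ is defined as a quantum subgroup of $C(\qu,n)$. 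For (2): orthogonal projections summing to $1$ in particular have $u_{i,j}^2$ an orthogonal projection, so $C(\qs,n)\subset C(\qh_s,n)$; dropping the ``$u_{i,j}^2$ is a projection'' condition (keeping only $u^Tu=1$, hence self-adjointness) gives $C(\qh_s,n)\subset C(\qo,n)$; and $C(\qo,n)\subset C(\qu,n)$ again by construction. For (3): the relation $\sum_k u_{k,i_1}u_{k,i_2}=\delta_{i_1,i_2}$ defining $C(\qo,n)$ implies, by summing over $i_2$, that $\sum_k u_{k,i_1}\big(\sum_{i_2}u_{k,i_2}\big)=1$; combined with self-adjointness and unitarity one deduces $\sum_k u_{k,j}=1$, and transposing the orthogonality relation (using that $u$ is also unitary) gives $\sum_k u_{i,k}=1$, which are the defining relations of $C(\qb_s,n)$, so $C(\qb_s,n)\subset C(\qo,n)$. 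For (4): taking $i_3=i_4$ appropriately one sees $m$-divisibility relations for larger $m$ specialize to smaller ones along the chain; concretely, the relation $\sum_k u_{k,i_1}\cdots u_{k,i_m}=\delta_{i_1,i_2}\cdots\delta_{i_{m-1},i_m}$ for all $i$'s, applied with strategically chosen indices and using unitarity of $u$, yields the length-$4$ relation $\sum_k u_{k,i_1}u_{k,i_2}u_{k,i_3}^*u_{k,i_4}^*=\delta_{i_1,i_2}\delta_{i_3,i_4}$ defining $C(\qh_0,n)$; that relation in turn, by inserting unitarity to move adjoints around, implies the $R$-diagonal relation $\sum_k u_{k,i_1}u_{k,i_2}^*u_{k,i_3}u_{k,i_4}^*=\delta_{i_1,i_2}\delta_{i_3,i_4}$ of $C(\rqh,n)$; and $C(\rqh,n)\subset C(\qu,n)$ by construction. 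Also $C(\qs,n)\subset C(\qh_m,n)$ since orthogonal projections summing to $1$ satisfy every $m$-divisible relation.

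The main obstacle I anticipate is item (4), and within it the implication $C(\qh_0,n)\subset C(\rqh,n)$ and the reductions between the length-$m$ relations: these require genuinely using the unitarity of $u$ (both $u^*u=1$ and $uu^*=1$, hence also $u^t\bar u=1$) to rewrite words like $u_{k,i}^*$ in terms of the other generators, rather than following from a one-line substitution. I would isolate a small algebraic lemma—something like ``under unitarity, the length-$2m$ alternating relation implies the length-$2$ ones obtained by collapsing adjacent equal indices''—and then read off all the chain inclusions from it; Proposition \ref{x-invariant algebra} and Lemma \ref{sum condition} are not needed here, but the self-adjointness Lemma preceding Definition \ref{quantum groups via vanishing conditions} is used repeatedly in (1)--(3). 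Everything else is routine: once the relation of the subalgebra is verified in the larger algebra, universality does the rest.
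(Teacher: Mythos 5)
Your overall strategy---verify the defining relations of the smaller algebra inside the larger one and invoke universality---is exactly the paper's, and it disposes of (1), (2), and the outer inclusions of (4) just as the paper does (``obvious from the definitions''). But two of your steps do not hold up. First, your argument for (3) is both reversed and false. In the paper's convention, $C(\qb_s,n)\subset C(\qo,n)$ means the generators of $C(\qb_s,n)$ satisfy the defining relations of $C(\qo,n)$, which is immediate since the relations of $C(\qb_s,n)$ literally contain the relation $\sum_k u_{k,i_1}u_{k,i_2}=\delta_{i_1,i_2}$ of $C(\qo,n)$. You instead try to derive the row/column-sum relations of $C(\qb_s,n)$ \emph{from} the $C(\qo,n)$ relations; that derivation is not valid (from $\sum_{i_2}\sum_k u_{k,i_1}u_{k,i_2}=1$ one cannot extract $\sum_k u_{k,j}=1$), and the conclusion would be false anyway: the matrix $\left(\begin{smallmatrix}0.6&0.8\\0.8&-0.6\end{smallmatrix}\right)$ from Example \ref{non-isomorphic example}(3) is a representation of $C(\qo,n)$ whose rows do not sum to $1$. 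The statement you need is trivially true in the correct direction, so this is a confusion about which way the quotient map goes, but as written the argument proves nothing.

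Second, and more seriously, the inclusion $C(\qh_0,n)\subset C(\rqh,n)$ in (4) is a genuine gap. ``Inserting unitarity to move adjoints around'' cannot turn the non-alternating relation $\sum_k u_{k,i_1}u_{k,i_2}u_{k,i_3}^*u_{k,i_4}^*=\delta$ into the alternating relation $\sum_k u_{k,i_1}u_{k,i_2}^*u_{k,i_3}u_{k,i_4}^*=\delta$ by algebraic substitution alone. The paper's proof of this step is an operator-positivity argument: after collapsing indices one shows termwise that $u_{k,j}u_{k,j}^*\le 1_\A$, hence $\sum_{j}\sum_k (u_{k,j}u_{k,j}^*)^2\le n1_\A$, and then computes that this sum equals exactly $n1_\A$ from the collapsed relations; the forced equality in the operator inequality is what yields the alternating relation (equivalently, it is what forces each $u_{i,j}$ to be a partial isometry with the required orthogonality of ranges, as in Lemmas \ref{ABAB Case} and \ref{AABB Case}). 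Your proposed ``small algebraic lemma'' about collapsing adjacent equal indices addresses the $C(\qh_m,n)\subset C(\qh_0,n)$ step (which does work by multiplying the relation against its adjoint and summing with $\sum_i u_{k',i}^*u_{k,i}=\delta_{k,k'}$, as in the paper), but it does not supply the positivity mechanism needed for $C(\qh_0,n)\subset C(\rqh,n)$, and without that the chain in (4) is incomplete.
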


\begin{proof}
According to the definitions, 1)-3) are obvious. 
 In addition, we have $ C(\qs,n)\subset C(\qh_m,n)$ and $C(\rqh,n)\subset C(\qu,n)$. It remains to show that  $C(\qh_m,n)\subset C(\qh_0,n)\subset C(\rqh,n)$ for all $m\geq 3$.  For convenience, we denote by $\delta(i_1,\cdots,i_k)=\delta_{i_1, i_2} \cdots \delta_{i_{k-1}, i_k}$ for all $k\geq 2$.
 
Assume that 
$
\sum\limits_{k=1}^n u_{k,i_1} u_{k, i_2} \cdots u_{k, i_m}  =\delta(i_1, \cdots, i_m)$  for all $i_1,\cdots,i_m\in [n]$, then 
$$
\sum\limits_{k=1}^n u^*_{k,i_m} \cdots u^*_{k,i_1}  =\delta(i_1, \cdots, i_m)$$ 
for all $i_1,\cdots,i_m\in [n]$.
It follows that 
$$ \left(\sum\limits_{k=1}^n u_{k,j_1}  \cdots u_{k, j_m}  \right) \left(\sum\limits_{k'=1}^n u^*_{k',i_m} \cdots u^*_{k',i_1}\right)
=\delta(i_1, \cdots, i_m)\delta(j_1, \cdots, j_m),$$

Let $j_m=i_m$. Then we have 
\begin{equation}\label{eq1}
\left(\sum\limits_{k=1}^n u_{k,i_1}u_{k,j_2}  \cdots u_{k, j_m}  \right) \left(\sum\limits_{k'=1}^n u^*_{k',i_m} \cdots u^*_{k',i_1}\right)=\delta(i_1, \cdots, i_m, j_1, j_2,\cdots, j_{m-1}),
\end{equation} 

Since $\sum\limits_{i_m=1}^n u^*_{k',i_m} u_{k,i_m}=\delta_{k',k}$,  and 
$\sum\limits_{i_m=1}^n \delta(i_m,j_{m-1}, i_{m-1})=\delta_{i_{m-1},j_{m-1}}$, by taking the sum of Equation (\ref{eq1}) over $i_{m}$, we have 
\begin{equation}\label{eq2}
\sum\limits_{k=1}^n u_{k,i_1} u_{k,j_2}  \cdots u_{k, j_{m-1}} u^*_{k,i_{m-1}} \cdots u^*_{k,i_1}=\delta(i_1,\cdots, i_{m-1},j_1,  \cdots,j_{m-1}),
\end{equation} 

Similarly, by letting $i_3=j_3,\cdots, i_{m-1}=j_{m-1}$ and take the sum of the Equation(\ref{eq2}) over $i_2,\cdots, i_{m-2}$ from $1$ to $n$, we have  
$$\sum\limits_{k=1}^n u_{k, i_1} u_{k, i_2} u_{k, j_2}^* u_{k, j_1}^*=\delta(i_1, i_2, j_1, j_2).$$
It follows that $C(\qh_m,n)\subset C(\qh_0,n)$.  Now, it remains to show that $C(\qh_0,n)\subset C(\rqh,n)$.

Assume that  $\sum\limits_{k=1}^n u_{k, i_1} u_{k, i_2} u_{k, i_3}^* u_{k, i_4}^*=\delta(i_1,\cdots ,i_4)$,  for all $i_1,\cdots,i_4\in [n]$, then we have 
\begin{equation}\label{eq3}
\left(\sum\limits_{k=1}^n u_{k, i_1} u_{k, i_2} u_{k, i_3}^* u_{k, i_4}^* \right) \left(\sum\limits_{k'=1}^n u_{k', j_1} u_{k', j_2} u_{k', j_3}^* u_{k', j_4}^* \right)=\delta(i_1,\cdots,i_4)\delta(j_1,\cdots,j_4).
\end{equation}
Again, by letting $i_4=j_1$ and take the sum of Equation(\ref{eq3}) over $j_1$, we get
\begin{equation}\label{eq4}
\sum\limits_{k=1}^n u_{k, i_1} u_{k, i_2} u_{k, i_3}^*  u_{k, j_2} u_{k, j_3}^* u_{k, j_4}^* =\delta(i_1,\cdots,i_3, j_2,\cdots,j_4).
\end{equation}
Notice that 
$$
\begin{aligned}
&\sum\limits_{i_1=1}^n\left[\left(\sum_{k_1=1}^n u_{k_1,i_1}^*\right) \left(\sum\limits_{k=1}^n u_{k, i_1} u_{k, i_2} u_{k, i_3}^*  u_{k, j_2} u_{k, j_3}^* u_{k, j_4}^*\right) \right]\\
=&\sum_{k_1=1}^n\sum\limits_{k=1}^n \left(\sum\limits_{i_1=1}^n u_{k_1,i_1}^*u_{k, i_1} \right)u_{k, i_2} u_{k, i_3}^*  u_{k, j_2} u_{k, j_3}^* u_{k, j_4}^*\\
=&\sum_{k=1}^n\sum\limits_{k_1=1}^n \delta_{k,k_1} u_{k, i_2} u_{k, i_3}^*  u_{k, j_2} u_{k, j_3}^* u_{k, j_4}^*\\
=&\sum_{k=1}^n u_{k, i_2} u_{k, i_3}^*  u_{k, j_2} u_{k, j_3}^* u_{k, j_4}^*\\
\end{aligned}
$$
and 
$$
\sum\limits_{i_1=1}^n\left[\left(\sum_{k_1=1}^n u_{k_1,i_1}^*\right) \left(\delta(i_1,\cdots,i_3, j_2,\cdots,j_4)\right) \right]=\left(\sum_{k_1=1}^n u_{k_1,j_4}^*\right)\delta(i_2,i_3, j_2,\cdots,j_4).
$$

Thus, we have 
$$ \sum\limits_{k=1}^n u_{k, i_2} u_{k, i_3}^*  u_{k, j_2} u_{k, j_3}^* u_{k, j_4}^*=\left(\sum_{k_1=1}^n u_{k_1,j_4}^*\right)\delta(i_2,i_3, j_2,\cdots,j_4).$$

Similarly, we have 
$$\sum\limits_{j_4=1}^n\left[\left(\sum\limits_{k=1}^n u_{k, i_2} u_{k, i_3}^*  u_{k, j_2} u_{k, j_3}^* u_{k, j_4}^*\right) \left(\sum_{k_2=1}^n u_{k_2,j_4}\right) \right]=\sum_{k=1}^n u_{k, i_2} u_{k, i_3}^*  u_{k, j_2} u_{k, j_3}^* $$
and 
$$
\begin{aligned}
\sum\limits_{j_4=1}^n\left[ \left(\sum_{k_1=1}^n u_{k_1,j_4}^*\right)\delta(i_2,i_3, j_2,\cdots,j_4)\left(\sum_{k_2=1}^n u_{k_2,j_4}\right) \right]=\sum_{k_1=1}^n\sum_{k_2=1}^n u_{k_1,j_3}^* u_{k_2,j_3} \delta(i_2,i_3,j_2,j_3)
\end{aligned}
$$
Thus, we have 
\begin{equation}\label{eq5}
\sum_{k=1}^n u_{k, i_2} u_{k, i_3}^*  u_{k, j_2} u_{k, j_3}^*=\sum_{k_1=1}^n\sum_{k_2=1}^n u_{k_1,j_3}^* u_{k_2,j_3} \delta(i_2,i_3,j_2,j_3)
\end{equation}
If $\delta(i_2,i_3,j_2,j_3)=0$, then we have $\sum\limits_{k=1}^n u_{k, i_2} u_{k, i_3}^*  u_{k, j_2} u_{k, j_3}^* =0=\delta(i_2,i_3,j_2,j_3).$\\
If $\delta(i_2,i_3,j_2,j_3)=1$, then $i_2=i_3=j_2=j_3$. Since $\sum\limits_{k=1}^n u_{k, j_3} u_{k, j_3}^*=1_{\A}$ and $u_{k, j_3} u_{k, j_3}^*\geq 0$ for all $k$,  $u_{k, j_3} u_{k, j_3}^*\leq 1_{\A}$. It follows that 
\begin{equation}\label{eq6}
\sum\limits_{k=1}^n u_{k, j_3} u_{k, j_3}^*u_{k, j_3} u_{k, j_3}^*\leq \sum\limits_{k=1}^n u_{k, j_3} u_{k, j_3}^*=1_{\A}.
\end{equation}
Hence,
\begin{equation}\label{eq7}
     \sum\limits_{j_3=1}^n\sum\limits_{k=1}^n u_{k, j_3} u_{k, j_3}^*u_{k, j_3} u_{k, j_3}^*\leq n1_{\A}.
     \end{equation}

On the other hand, put $i_2=i_3=j_2=j_3$ in Equation (\ref{eq6}), we have 
$$
\sum\limits_{j_3=1}^n\sum\limits_{k=1}^n u_{k, j_3} u_{k, j_3}^*u_{k, j_3} u_{k, j_3}^*
=\sum\limits_{j_3=1}^n\sum_{k_1=1}^n\sum_{k_2=1}^n u_{k_1,j_3}^* u_{k_2,j_3} 
=\sum_{k_1=1}^n\sum_{k_2=1}^n \delta_{k_1,k_2}=n1_{\A}
$$

Therefore,  the equality holds in the inequality (\ref{eq6}).  We always have
$$\sum\limits_{k=1}^n u_{k, i_1} u^*_{k, i_2} u_{k, i_3} u_{k, i_4}^*=\delta_{i_1, i_2} \cdots \delta_{i_3, i_4},$$  for all $i_1,\cdots,i_4\in [n]$.  It follows that $C(\qh_0,n)\subset C(\rqh,n)$. 
\end{proof}

In summary, for each $n$, we have the following diagram.

\begin{center}

\begin{tikzpicture}[xscale=1.5,yscale=1]

\path 
node   (m11) at (0,2) {$C(\qs,n)$}         
node   (m21) at (1.5,3) {$C(\qb_s,n)$}
node   (m22) at (1.5,2) {$C(\qh_s,n)$}
node   (m23) at (1.5,1) {$C(\qh_m,n)$}
node   (m33) at (3,1) {$C(\qh_0,n)$}
node   (m32) at (3,2) {$C(\qo,n)$}
node   (m31) at (3,3) {$C(\qb,n)$}     
node   (m42) at (6,2) {$C(\qu,n)$}
node   (m43) at (4.5,1) {$C(\rqh,n)$}                               
 ;
{[->]       
\draw   (m11) --  (m21); 
\draw   (m11) --  (m22);
\draw   (m11) --  (m23); 
\draw   (m21) --  (m31);
\draw   (m21) --  (m32);
\draw   (m23) --  (m33);
\draw   (m32) --  (m42);
\draw   (m33) --  (m43);
\draw   (m31) --  (m42);
\draw   (m43) --  (m42);
\draw   (m22) --  (m32);
      }
   
\end{tikzpicture}
\end{center}

In the above diagram, $\A\rightarrow \B$ means $\A$ is a quantum subgroup of $\B$.

Now, we provide examples to show that the above relations are proper for $n\geq 3$. 

\begin{example}\label{non-isomorphic example}
To show that $\A$ is a proper quantum subgroup of $\mathcal{B}$, it suffices  to provide a matrix representation of $\mathcal{B}$ that  is not of $\A$.
\begin{enumerate}
\item For $ C(\qs,n)\subsetneq C(\qb_s,n)\subsetneq C(\qb,n)\subsetneq C(\qu,n)$: Let $n=3$, and consider the following matrix representations for $C(\qb_s,n), C(\qb,n), C(\qu,n)$:
$$\left(\begin{array}{ccc}
\frac{2}{3} & \frac{2}{3} & \frac{-1}{3} \\
\frac{2}{3} & -\frac{1}{3} & \frac{2}{3} \\
-\frac{1}{3} & \frac{2}{3} & \frac{2}{3}
\end{array}\right) ,
\left(\begin{array}{ccc}
\frac{1}{2}+i & \frac{1}{2}-i &0\\
\frac{1}{2}-i & \frac{1}{2}+i &0\\
0&0&1
\end{array}\right),
\left(\begin{array}{ccc}
i &0 &0\\
0 & 1 &0\\
0&0&1
\end{array}\right)
$$
\item For $ C(\qs,n)\subsetneq C(\qh_s,n)\subsetneq C(\qo,n)\subsetneq C(\qu,n)$. Let $n=2$, and consider the following matrix representations for $ C(\qh_s,n), C(\qo,n), C(\qu,n)$:
$$
\left(\begin{array}{cc}
-1&0\\
0 & 1
\end{array}\right),
\left(\begin{array}{cc}
0.6 &0.8\\
0.8 &-0.6
\end{array}\right),
\left(\begin{array}{cc}
i &0\\
0 & 1
\end{array}\right).$$
\item For $ C(\qb_s,n)\subsetneq C(\qo,n)$. Let $n=2$ and consider the following matrix representations for $ C(\qo,n)$:
$$ \left(\begin{array}{cc}
0.6 &0.8\\
0.8 &-0.6
\end{array}\right)$$
\item For $m\geq 3$, consider the following matrix representations:\\
Let $u_{i,j}=\delta_{i,j} e^{\theta\pi i}$ such that  $\theta=\frac{2}{m}$, Then $(u_{i,j})$ is a representation of $C(\qh_m,n)$ but not of $C(\qs,n)$.\\
Let $u_{i,j}=\delta_{i,j} e^{\theta\pi i}$ such that $\theta$ be a irrational real number. Then, $\sum\limits_{\alpha=1}^n u^{d_1}_{\alpha,1}\cdots u^{d_k}_{\alpha,1}\neq 1$
if the numbers of $*$ and $1$ of  $\dd$ are not equal.  In this case, $(u_{i,j})$ is a representation of $C(\qh_0,n)$ but not of $C(\qh_m,n)$.\\
Consider the following representation of $C(\rqh,n)$: Let $u_{i,j}\in M_2(\C)$ such that  
$$ u_{1,1}=\left(\begin{array}{ll}
0 & 1 \\
0 & 0
\end{array}\right) \,
 u_{1,2}=\left(\begin{array}{ll}
0 & 0 \\
1 & 0
\end{array}\right) \,
 u_{2,1}=\left(\begin{array}{ll}
0 & 0 \\
1 & 0
\end{array}\right) \,
 u_{2,2}=\left(\begin{array}{ll}
0 & 1 \\
0 & 0
\end{array}\right) \,$$
 and $u_{i,j}=\delta_{i,j} 1_{M_2(\C)}$ for the other $i,j$. In this case, we have $u_{\alpha,1}^2=(u^*_{\alpha,1})^2=0$ for all $\alpha$, it follows that $\sum\limits_{\alpha=1}^n u^{d_1}_{\alpha,1}\cdots u^{d_k}_{\alpha,1}=1$
only if  $\dd$ is alternating and $|\dd|$ is even.  In this case, $(u_{i,j})$ is a representation of $C(\qh_0,n)$ but not of $C(\rqh,n)$.\\
Notice that  $\left(\begin{array}{cc}
0.6 &0.8\\
0.8 &-0.6
\end{array}\right),$ is a matrix representation of $C(\qu,n)$ but not of $C(\rqh,n)$.\\
Therefore, we have $ C(\qs,n)\subsetneq C(\qh_m,n)\subsetneq C(\qh_0,n)\subsetneq C(\rqh,n)\subsetneq C(\qu,n).$
\end{enumerate}
\end{example}

The characterization of random variables for de Finetti-type theorems relies on vanishing conditions on cumulants, which are described by partitions. Therefore, we need the following identity characterization for the generators of quantum groups over blocks, allowing us to extend these identities to non-crossing partitions and general partitions.

\begin{proposition}\label{quantumgroup-Block identity}

 Let $\A(n)$ be a quantum group generated by $\{u_{i,j}|i,j=1,\cdots,n\})$. Then, we have the following identities.\\
 \begin{itemize}
 \item Selfadjoint cases: 
 \begin{enumerate}
  \item If $\A(n)=C(\qs,n)$, then $\sum\limits_{\alpha=1}^n u^k_{\alpha,i}=1$ for all $k\in \mathbb{Z}^+$.
\item If $\A(n)=C(\qo,n)$, then $\sum\limits_{\alpha=1}^n u^k_{\alpha,i}=1$ if and only if $k=2$.
\item If $\A(n)=C(\qb_s,n)$, then $\sum\limits_{\alpha=1}^n u^k_{\alpha,i}=1$ if and only if  $k=1,2$.
\item If $\A(n)=C(\qh_s,n)$, then $\sum\limits_{\alpha=1}^n u^k_{\alpha,i}=1$ if and only if  $k$ is even.
 \end{enumerate}

\item Non-selfadjoint cases: Let $k\in \mathbb{Z}^+$ and $\dd=(d_1,\cdots,d_k)$ be a sequence of k elements from $\{1,*\}$.
 \begin{enumerate}
\item If $\A(n)=C(\qb,n) $: For $1\leq i\leq n$,  $\sum\limits_{k=1}^n u^{d_1}_{\alpha,i}\cdots u^{d_k}_{\alpha,i}=1$ if and only if  $k=1$ or $k=2$ and $\dd=\{1,*\}\,\text{or}\, \{*,1\}$.

\item If $\A(n)=C(\qh_m,n)$ with $m\geq 3$:  For $1\leq i\leq n$, $\sum\limits_{\alpha=1}^n u^{d_1}_{\alpha,i}\cdots u^{d_k}_{\alpha,i}=1$  if and only if 
the difference of the numbers of $*$ and $1$ of  $\dd$ can be divided by $m$.

\item If $\A(n)=C(\qh_0,n) $: For $1\leq i\leq n$,  $\sum\limits_{\alpha=1}^n u^{d_1}_{\alpha,i}\cdots u^{d_k}_{\alpha,i}=1$ if and only if  the numbers of $*$ and $1$ of $\dd$ are equal.

\item If $\A(n)=C(\rqh,n)$:  For $1\leq i\leq n$, $\sum\limits_{\alpha=1}^n u^{d_1}_{\alpha,i}\cdots u^{d_k}_{\alpha,i}=1$ if and only if  $k\in 2\mathbb{Z}^+$ and $\dd$ is alternating.

\item If $\A(n)=C(\qu,n) $:  For $1\leq i\leq n$, $\sum\limits_{\alpha=1}^n u^{d_1}_{\alpha,i}\cdots u^{d_k}_{\alpha,i}=1$  if and only if  $k=2$ and $\dd=\{1,*\}\,\text{or}\, \{*,1\}$.

\end{enumerate}  
 \end{itemize}
 
\end{proposition}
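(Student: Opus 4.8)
The plan is to prove the "only if" and "if" directions separately for each of the nine quantum groups, organizing the work around the block-wise vanishing relations that define the generators. The key structural observation is that for each of these quantum groups the defining relation has the form
$$\sum_{k=1}^n u_{k,i_1}^{e_1}\cdots u_{k,i_m}^{e_m}=\delta_{i_1,i_2}\cdots\delta_{i_{m-1},i_m}\,1_\A$$
for generating tuples $(e_1,\dots,e_m)$, and that, by Lemma \ref{sum condition} together with the reduction in Proposition \ref{quantum subgroup relations}, such a relation on a \emph{single column} (i.e. setting $i_1=\dots=i_m=i$, which yields $\sum_\alpha u_{\alpha,i}^{e_1}\cdots u_{\alpha,i}^{e_m}=1_\A$) already encodes all the information we need: we want to identify exactly which monomials $\dd=(d_1,\dots,d_k)$ force $\sum_\alpha u_{\alpha,i}^{d_1}\cdots u_{\alpha,i}^{d_k}=1$. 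First I would fix $i$ and write $p_\alpha=u_{\alpha,i}$, so that the column relation becomes a statement purely about the family $\{p_\alpha\}_{\alpha=1}^n$ together with the constraint $\sum_\alpha p_\alpha^* p_\alpha=\sum_\alpha p_\alpha p_\alpha^*=1_\A$ (unitarity of $u$).

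For the "only if" direction in each case, I would use the abelianization/matrix-representation trick already exploited in Example \ref{non-isomorphic example}: to show that $\sum_\alpha u_{\alpha,i}^{d_1}\cdots u_{\alpha,i}^{d_k}=1$ \emph{fails} for a forbidden $\dd$, exhibit a one-dimensional (scalar) corepresentation $u_{i,j}=\delta_{i,j}\lambda$ with $|\lambda|=1$ — e.g. $\lambda=e^{2\pi i/m}$ for $C(\qh_m,n)$, $\lambda$ a root of $-1$ or an irrational rotation for $C(\qh_0,n)$ and $C(\cu,n)$, and $\lambda=-1$ for $C(\qh_s,n)$ — under which the column sum becomes $\lambda^{(\#1\text{'s})-(\#*\text{'s})}$ (or the appropriate alternating-word evaluation), which is $\ne 1$ precisely for the excluded monomials. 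For $C(\rqh,n)$ the relevant obstruction is the explicit $M_2(\C)$-valued R-diagonal-type corepresentation of Example \ref{non-isomorphic example}(4), under which $p_\alpha^2=(p_\alpha^*)^2=0$, so the column sum vanishes unless $\dd$ is alternating of even length. For $C(\qb,n)$ one uses the representation with a $\tfrac12\pm i$ block to rule out words of length $\ge 3$ and non-alternating words of length $2$.

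For the "if" direction — the heart of the matter — I would argue that the stated monomials $\dd$ do force $\sum_\alpha p_\alpha^{d_1}\cdots p_\alpha^{d_k}=1_\A$, by a two-step reduction. Step one: from the generating relation, derive the full family of column identities by the "multiply two relations and contract over a shared index" manipulation performed in the proof of Proposition \ref{quantum subgroup relations} (Equations (\ref{eq1})–(\ref{eq7})) — this is exactly how one passes, for instance, from $\sum_\alpha p_\alpha p_\alpha p_\alpha^* p_\alpha^*=1$ (the $\qh_0$ relation) to $\sum_\alpha p_\alpha p_\alpha^* p_\alpha p_\alpha^*=1$ (an $\rqh$-type relation), and more generally from the length-$m$ relation of $\qh_m$ down to all shorter balanced-mod-$m$ words. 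Step two: use the positivity argument (inequality (\ref{eq6})–(\ref{eq7}): since $\sum_\alpha p_\alpha p_\alpha^*=1_\A$ forces $0\le p_\alpha p_\alpha^*\le 1_\A$, hence $\sum_\alpha (p_\alpha p_\alpha^*)^2\le 1_\A$ with equality exactly when each $p_\alpha p_\alpha^*$ is a projection) to pin down the remaining balanced words; equality in such a Cauchy–Schwarz/positivity estimate propagates to the needed identities on all longer balanced words by the same contraction trick. The main obstacle will be the bookkeeping in Step one for $C(\qh_m,n)$ and $C(\qh_0,n)$: one must show that \emph{every} word $\dd$ whose exponent sum is divisible by $m$ (resp. is zero), regardless of the order of the $1$'s and $*$'s, reduces to the generating relation — this requires an induction on word length in which one repeatedly contracts an adjacent $u^*u$ or $uu^*$ pair (using $\sum_\alpha p_\alpha^* p_\alpha=1$) or splices in such a pair, and one must check the induction does not get stuck on words that are "balanced" only globally but have no cancellable adjacent pair, which is where the positivity input from Step two becomes essential. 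For $C(\qs,n)$ the argument is easiest: $u_{i,j}$ a projection gives $p_\alpha^k=p_\alpha$, so every column power sum collapses to $\sum_\alpha p_\alpha=1$. The remaining cases ($C(\qo,n)$, $C(\qb_s,n)$, $C(\qb,n)$) are then read off from Proposition \ref{quantum subgroup relations} and the self-adjointness lemma.
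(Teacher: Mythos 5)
Your strategy is essentially the paper's: the ``only if'' directions are obtained by evaluating the column sums in the concrete representations of Example \ref{non-isomorphic example}, and the ``if'' directions by the contraction--positivity machinery of Lemmas \ref{invertible}--\ref{AABB Case} and Proposition \ref{sum of power m} (the paper simply cites \cite{Liu3} for the four selfadjoint cases, and handles $C(\qs,n)$, $C(\qb,n)$, $C(\qh_m,n)$, $C(\qh_0,n)$, $C(\rqh,n)$ exactly along the lines you describe, including the reduction of a non-alternating balanced word to a shorter one via $u_{i,j}^*=(u_{i,j}^*)^2u_{i,j}=u_{i,j}(u_{i,j}^*)^2$).

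There is one step that fails as written. For $C(\qu,n)$ you propose to exclude the forbidden words by a unimodular scalar corepresentation $u_{i,j}=\delta_{i,j}\lambda$, under which a word $\dd$ evaluates to $\lambda^{(\#1)-(\#*)}$. But for $C(\qu,n)$ the proposition asserts that the identity fails for \emph{every} word except $k=2$ alternating, and in particular for balanced words of length $\geq 4$ such as $\dd=(1,*,1,*)$; these evaluate to $\lambda^{0}=1$ in any unimodular scalar representation, so no choice of $\lambda$ can detect them. You need a representation whose entries are not partial isometries --- the real orthogonal $2\times 2$ representation of Example \ref{non-isomorphic example} with entries $0.6,\,0.8,\,0.8,\,-0.6$ works, since $0.6^4+0.8^4\neq 1$ --- or, as the paper does, one can avoid any computation by noting that a universal identity in $C(\qu,n)$ must pass to its quantum quotients $C(\qo,n)$ and $C(\rqh,n)$, and intersecting the lists already established for those two groups leaves exactly $k=2$ and $\dd$ alternating. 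With that repair the proposal coincides with the paper's proof.
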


\begin{proof}
For selfadjoint cases, see\cite{Liu3}.  In the following, we will focus on the non-selfadjoint cases: 

1)  By the definition of  $C(\qb, n)$, we have $\sum\limits_{\alpha=1}^n u^{d_1}_{\alpha,1}\cdots u^{d_k}_{\alpha,1}=1$ if $k=1$ or $k=2$ and $\dd=\{1,*\}\,\text{or}\, \{*,1\}$.  \\
Notice that $C(\qb_s,n)\subset C(\qb,n)$, the equation hold only if $k=1,2$.  When $k=2$, consider the representation in Example \ref{non-isomorphic example}. Let $$u_{1,1}=\frac{1}{2}+\frac{i}{2}, u_{1,2}=\frac{1}{2}-\frac{i}{2},u_{2,1}=\frac{1}{2}-\frac{i}{2},u_{2,2}=\frac{1}{2}+\frac{i}{2}.$$ 
For the other $u_{i,j}$, let $u_{i,j}=\delta_{i,j}$. Then, we have that  $\sum\limits_{\alpha=1}^n u^2_{\alpha,1}\neq 1$ and   $\sum\limits_{\alpha=1}^n (u^*)^2_{\alpha,1}\neq 1$.

2)  According to the definition of $(\qh_m,n)$, we have that 
$$\sum\limits_{\alpha=1}^n u_{\alpha,i}u^{m-1}_{\alpha,j}=\delta_{i,j}.$$
Since $(u_{i,j})$ is unitary in $M_n(\A)$, we have that $u^*_{i,j}=u^{m-1}_{i,j}$. It follows that  $u^{d_1}_{\alpha,1}\cdots u^{d_k}_{\alpha,1}=u^{lm}_{\alpha,1}$ 
for some $l\in \mathbb{Z}^+$ if  the difference of the numbers of $*$ and $1$ of  $\dd$ can be divided by $m$.

On the hand, we have 
$ u_{i,j}=(u^*_{i,j})^{m-1}=u^{(m-1)^2}_{i,j}.$ It follows that $ u^{lm}_{\alpha,i}=u^{l'm}_{\alpha,i}$ for some $l'\leq m-2$.

In Equation (\ref{eq2}), let $i_1=i_2=\cdots-i_{l'}=1$ and take the sum of the equation over $j_{l'+1},\cdots,j_{m-1}$ from $1$ to $n$.
We get 
$$1=\sum\limits_{\alpha=1}^n u^{l'}_{\alpha,i}(u^*_{\alpha,i})^{l'}=\sum\limits_{\alpha=1}^n u^{l'm}_{\alpha,i}.$$

Let $u_{1,1}=e^{\frac{2\pi i}{m}}$, and $u_{i,j}=\delta_{i,j}$ for the other $u_{i,j}$. Then, $\sum\limits_{\alpha=1}^n u^{d_1}_{\alpha,i}\cdots u^{d_k}_{\alpha,1}\neq 1$
if 
the difference of the numbers of $*$ and $1$ of  $\dd$ can not be divided by $m$.

3) Applying the method from last case, by letting $v=(u^2_{i,j}u^*_{i,j})_{i,j=1,\cdots,n}\in M_n(\A)$, we get $vu=uv=1_n$. It follows that
$u_{i,j}=u^2_{i,j}u^*_{i,j}$ and $u_{i,j}^*=u_{i,j}(u^*_{i,j})^2$ and 
$$\sum\limits_{\alpha=1}^n u^*_{\alpha,j}u^2_{\alpha,i}u^*_{\alpha,i}=\delta_{i,j}.$$

Since $\sum\limits_{\alpha=1}^n u_{\alpha, i_1} u_{\alpha, i_2} u_{\alpha, i_3}^* u_{\alpha, i_4}^*=\delta(i_1, i_2,i_3, i_4)$
and $\sum\limits_{\alpha'=1}^n u_{\alpha', i_5}^* u_{\alpha', i_4}=\delta_{i_5, i_4}$, we have 
$$\sum\limits_{\alpha=1}^n \sum\limits_{\alpha'=1}^n u_{\alpha', i_5}^* u_{\alpha, i_1} u_{\alpha, i_2} u_{\alpha, i_3}^* u_{\alpha, i_4}^* u_{\alpha', i_4}=\delta(i_1, i_2,i_3, i_4,i_5). $$
It follows that $$
\begin{aligned}
\delta(i_1, i_2,i_3,i_5)
=&\sum\limits_{i_4=1}^n \delta(i_1, i_2,i_3, i_4,i_5)\\
=&\sum\limits_{i_4=1}^n \sum\limits_{\alpha=1}^n \sum\limits_{\alpha'=1}^n u_{\alpha', i_5}^* u_{\alpha, i_1} u_{\alpha, i_2} u_{\alpha, i_3}^* u_{\alpha, i_4}^* u_{\alpha', i_4}\\
=&\sum\limits_{\alpha=1}^n \sum\limits_{\alpha'=1}^n u_{\alpha', i_5}^* u_{\alpha, i_1} u_{\alpha, i_2} u_{\alpha, i_3}^* \delta_{\alpha,\alpha'}\\
=&\sum\limits_{\alpha=1}^n  u_{\alpha, i_5}^* u_{\alpha, i_1} u_{\alpha, i_2} u_{\alpha, i_3}^*.
\end{aligned}
$$
Similarly, we have 
$$ \delta(i_1, i_2,i_5,i_6)=\sum\limits_{i_3=1}^n \sum\limits_{\alpha=1}^n \sum\limits_{\alpha'=1}^n u_{\alpha', i_6}^*u_{\alpha, i_5}^* u_{\alpha, i_1} u_{\alpha, i_2} u_{\alpha, i_3}^* u_{\alpha', i_3}=\sum\limits_{\alpha=1}^n u_{\alpha, i_6}^*u_{\alpha, i_5}^* u_{\alpha, i_1} u_{\alpha, i_2}.$$
Follows the  proof for $C(\qh_0,n)\subseteq C(\rqh,n)$, we have 
$$ \sum\limits_{\alpha=1}^n u_{\alpha, i}^* u_{\alpha, i}u_{\alpha, i}^* u_{\alpha, i}=1$$
and $u_{\alpha, i}^* u_{\alpha, i}$ is an orthogonal projection for all $i, \alpha$.

Let $v'=(u^*_{i,j}u^2_{i,j})_{i,j=1,\cdots,n}\in M_n(\A)$. Then, we have $u^*v'=1_{M_n(\A)}$. It follows that $u_{i,j}=u^*_{i,j}u^2_{i,j}$ and $u_{i,j}^*=(u^*_{i,j})^2u_{i,j}$.  Therefore, we have 
$$ \sum\limits_{\alpha=1}^n u_{\alpha,j}\left(u^*_{\alpha,i}\right)^2u_{\alpha,i}=\delta_{i,j}.$$
Now, we finish the proof to the sufficient part by induction.   The equality holds for $k=2$, since $u$ is unitary. For $k=4$, $\dd$ can be one of  
$$(1,*,1,*), (*,1,*,1) ,(1,*,*,1),  (1,1,*,*) , (*,*,1,1), (*,1,1,*),$$ 
the equation holds all these cases.\\
Assume that the equation holds for $k=2l$ where $l\geq 2$. \\
When $k=2l+2$. 
If  $\dd$ is alternating, then $\sum\limits_{\alpha=1}^n u^{d_1}_{\alpha,i}\cdots u^{d_k}_{\alpha,i}$ is equal to either $\sum\limits_{\alpha=1}^n \left(u_{\alpha,i}u^*_{\alpha,i}\right)^k$ or $\sum\limits_{\alpha=1}^n \left(u^*_{\alpha,i}u_{\alpha,i}\right)^k$, thus equal to either  $\sum\limits_{\alpha=1}^n u_{\alpha,i}u^*_{\alpha,i}$ or $\sum\limits_{\alpha=1}^n u^*_{\alpha,i}u_{\alpha,i}$, which are both equal to $1_{\A}$.\\
If $\dd$ is not alternating, since the the numbers of $*$ and $1$ of $\dd$ are equal,  we have three consecutive items in $\dd$ equal to $(1,1,*),(*,*,1),(*,1,1),(1,*,*)$.  Since  $u_{i,j}^*=(u^*_{i,j})^2u_{i,j}=u_{i,j}(u^*_{i,j})^2$ for all $i,j$. The case reduces to the case for $k=2l$. By induction, the equation holds.\\
The necessary part for the statement follows the representation in Example \ref{non-isomorphic example}.

4)  Since $\sum\limits_{\alpha=1}^n u_{\alpha, i_1} u^*_{\alpha, i_2} u_{\alpha, i_3} u_{\alpha, i_4}^*=\delta_{i_1, i_2,i_3, i_4}$,  for all $i_1,\cdots,i_4\in [n]$, apply the method in the previous case, we have 
$$ \delta_{i_1, i_2,i_3,i_5}=
\sum\limits_{i_4=1}^n \sum\limits_{\alpha=1}^n \sum\limits_{\alpha'=1}^n u_{\alpha', i_5}^*u_{\alpha, i_1} u^*_{\alpha, i_2} u_{\alpha, i_3} u_{\alpha, i_4}^*u_{\alpha', i_4}
=\sum\limits_{\alpha=1}^n u_{\alpha, i_5}^*u_{\alpha, i_1} u^*_{\alpha, i_2} u_{\alpha, i_3}.$$
Thus, $u_{i,j}u_{i,j}^*$ and $u^*_{i,j}u_{i,j}$ are projections for all $i,j$. If  $\dd$ is alternating, follows the previous case, then $\sum\limits_{\alpha=1}^n u^{d_1}_{\alpha,i}\cdots u^{d_k}_{\alpha,i}$ is equal to $1_{\A}$.\\
The necessary part for this  statement also follows the representation in Example \ref{non-isomorphic example}.

5)By the  definition of $C(\qu,n)$, both $C(\qo,n)$ and $C(\rqh,n)$ are quantum subgroups of $C(\qu,n)$,  thus the equation holds only if $k=2$ and $\dd$ is alternating. 

\end{proof}

\section{ Vanishing-cumulants for single random variables}

The classification results presented in this paper are closely linked to the vanishing cumulants condition.
 Therefore, we need to consider the classification of random variables based on cumulants. 
 In the definition below, we introduce a classification based on non-vanishing free cumulants in free probability. 
 For the definitions in classical probability, one merely needs to replace free cumulants with classical cumulants.

\begin{definition}\label{types of random variables} Let $x$ be an element of a $\B$-valued probability space $(\A, \E)$.
\begin{enumerate}

\item If  for each $k\in \N$ and $b_1,\cdots,b_k\in \B$, we have 
$$\kappa^{\pi}(x^{d_1}b_1,\cdots,x^{d_k}b_k)=0$$
unless all blocks of $\pi$ are of even sizes,  then we say that $x$ is a \textit{symmetric} element.

\item If  for each $k\in \N$, $\dd\in \{1,*\}^k$ and $b_1,\cdots,b_k\in \B$, we have 
$$\kappa^{(\pi)}(x^{d_1}b_1,\cdots,x^{d_k}b_k)=0$$
unless $\pi\in  NC_2(k),$  then we say $x$ is an \textit{orthogonal} element. Further, if $x$ is selfadjoint, then $x$ is referred to as a  \textit{semicircular} element.

\item If $x$ is the sum of an orthogonal (semicircular) element and a constant element from $\B$, then $x$ is called a \textit{shifted orthogonal} (semicircular) element.

\item Given $m\in \N$ with $m\geq 3$, if  for each $k\in \N$, $\dd\in \{1,*\}^k$ and $b_1,\cdots,b_k\in \B$, we have 
$$\kappa^{\pi}(x^{d_1}b_1,\cdots,x^{d_k}b_k)=0$$
unless $\pi\in  NC^{\dd,m}(k),$  then we say that $x$ is a \textit{$m$-unitary} element.

\item If for each $k\in \N$, $\dd\in \{1,*\}^k$ and $b_1,\cdots,b_k\in \B$, we have 
$$\kappa^{\pi}(x^{d_1}b_1,\cdots,x^{d_k}b_k)=0$$
unless $\pi\in  NC^{\dd,\infty}(k),$  then we say that $x$ is a \textit{free-unitary} element.

\item If  for each $k\in \N$, $\dd\in \{1,*\}^k$ and $b_1,\cdots,b_k\in \B$, we have 
$$\kappa^{\pi}(x^{d_1}b_1,\cdots,x^{d_k}b_k)=0$$
unless $\pi\in  NC^{\dd}(k),$  then we say $x$ is an \textit{R-diagonal} elements 

\item  If  for each $k\in \N$, $\dd\in \{1,*\}^k$ and $b_1,\cdots,b_k\in \B$, we have 
$$\kappa^{\pi}(x^{d_1}b_1,\cdots,x^{d_k}b_k)=0$$
unless $\pi\in  NC_2^{\dd}(k),$  then we say $x$ is a \textit{circular} element.

\item If $x$ is the sum of a circular element and a constant element from $\B$, then $x$ is called a \textit{shifted circular} element.

\end{enumerate}
\end{definition}

Notice that the vanishing condition for cumulants associated with noncrossing partitions depends on the blocks contained within them. 
Therefore, the partitions in the above definition can be replaced by a single block of full size $k$.
We can also define shifted $R$-diagonal or shifted $k$-unitary elements; however, we will see that we do not have de Finetti theorems for them.
In classical probability, random variables are assumed to commute with each other; thus, there is no $R$-diagonal counterpart definition in classical probability.
Notice that circular and semicircular elements are of central limit laws if free probability, thus their counterparts in classical probability are real-Gaussian and complex Gaussian distributions.
Based on the properties of the free product of probability spaces, in the scalar case, the probability space generated by freely independent self-adjoint random variables is automatically tracial. Consequently, the $W^* $-probability space is never of Type III.

Scalar-valued circular elements were introduced by Voiculescu in \cite{Vo2} via creation operators on Fock space, and the operator-valued case was then considered in \cite{Vo3}. 
Since circular elements have the most vanishing conditions on cumulants, the free additive convolution of any types of random variables in Definition \ref{types of random variables} with a circular elements remains of the same type.
This can be used to construct many nontrivial examples of random variables, as defined in Definition \ref{types of random variables}, especially in Type III probability spaces \cite{Sh1}. 
The R-diagonal elements introduced in \cite{NS1} were developed to approach Haar unitaries and circular elements; their operator-valued case was shown to be the limit law of random Vandermonde matrices \cite{BD}.

\section{Finite  sequences of symmetric random variables}

Recall that the original de Finetti-type theorem states that infinite exchangeable sequences of random variables are identically distributed conditionally independent. The quantum analogue of the de Finetti-type theorem asserts that infinite quantum exchangeable sequences of real-valued random variables are  identically distributed conditionally free independent (or operator-valued freely independent) and has been generalized to hold for $*$-random variables \cite{Cu2}.

In this paper, the quantum groups lie either between permutation groups and unitary groups for commuting random variables or between quantum permutation groups and quantum unitary groups in a noncommutative framework. Therefore, for an infinite sequence of random variables, we may assume they are either identically distributed  conditionally classical independent or identically distributed  conditionally free independent. 

As a result, the classification of symmetric random variables reduces to the classification of a single operator-valued random variable. We will demonstrate that the distributional characterization from quantum groups naturally leads to a vanishing condition on cumulants. In the following definition, we begin with an arbitrary quantum group.

\begin{definition}  Let $\A$ be a compact matrix quantum group for $n$ with generators $(u_{i,j})$. 
For each $k\in\N$, $\dd=(d_1,\cdots,d_k)\in \{1,*\}^k$, we say that $\dd$ is an $\A$-sequence
if $$\sum\limits_{\alpha=1}^n u^{d_1}_{\alpha,j}u^{d_2}_{\alpha,j}\cdots u^{d_k}_{\alpha,j}=1$$ for all $j=1,\cdots,n$.
For each $m\in\N$, $\dd=(d_1,\cdots,d_m)\in \{1,*\}^k$ ,we define $NC^{\A,\dd}(m)\subset NC(m)$ to the set of noncrossing paritions $\pi$ whose blocks restrict to $\dd$ are all $\A$-sequences.
\end{definition}

For $\dd=(d_1,\cdots,d_k)\in \{1,*\}^k$ and $\ii=(i_1,\cdots, i_k) \in[n]^k$, we will denote $u^{d_1,j}_{i_1}\cdots u^{d_k}_{i_k,j} $ by $u^\dd_{\ii, j}$.
According to the definition, for each $k\in\N$, $\dd=(d_1,\cdots,d_k)\in \{1,*\}^k$ and $\pi\in NC^{\A,\dd}(k)$, we have 
$$ \sum_{\substack{\ii \in[n]^k,  \pi\leq \ker\ii }} u^\dd_{\ii, 1} =1_\A. $$

One should be cautious that $NC^{\A,\dd}(k)$ does not necessarily include all noncrossing partitions $\pi$ that satisfy the aforementioned equation.
 This observation highlights that certain universal conditions for compact matrix quantum groups do not affect the distribution of the corresponding variables. It also explains why the probabilistic classification of quantum groups based on de Finetti-type theorems is coarser than the classification for easy quantum groups.

\begin{proposition}\label{non-vanishing cumulant implies identity} Let $\A$ be a compact matrix quantum group for $n$ with generators $(u_{i,j})$, and let   $(\M,\E)$ be a $\B$-valued probability space. Assume that   $x_1,\cdots,x_n\in \M$ are  freely independent, identically distributed and  $\A$-invariant. Then, for each $k\in\N$, $\dd=(d_1,\cdots,d_k)\in \{1,*\}^k$,
the cumulant $$\kappa^{(k)}\left[b_0x^{d_1}_1, b_1 x^{d_2}_1 \cdots ,b_{k-1}x^{d_k}_1 b_k\right]=0$$ unless
$$\sum\limits_{\alpha=1}^n u^{d_1}_{\alpha,j}u^{d_2}_{\alpha,j}\cdots u^{d_k}_{\alpha,j}=1$$ for all $j=1,\cdots,n$.

\end{proposition}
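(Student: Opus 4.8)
The plan is to push the $\A$-invariance from the level of moments down to the level of operator-valued free cumulants, and then read off the vanishing statement from the vanishing of mixed free cumulants of free variables. Throughout I would work in the algebraic tensor product $\M\otimes\A$, viewed as a $(\B\otimes\A)$-valued probability space with conditional expectation $\E\otimes\mathrm{id}_\A$, and set $\tilde x_j=\sum_{i=1}^n x_i\otimes u_{i,j}$, so that $\tilde x_j^{\,d}=\sum_{i=1}^n x_i^{\,d}\otimes u_{i,j}^{\,d}$ for $d\in\{1,*\}$. Specializing the definition of $\A$-invariance to the case where every column index equals $j$ gives that all $\B$-valued moments of the $\tilde x_j$ land in $\B\otimes 1_\A$, namely
\[
(\E\otimes\mathrm{id}_\A)\bigl[b_0\tilde x_j^{\,d_1}b_1\cdots b_{k-1}\tilde x_j^{\,d_k}b_k\bigr]=\E\bigl[b_0 x_j^{\,d_1}b_1\cdots b_{k-1}x_j^{\,d_k}b_k\bigr]\otimes 1_\A .
\]

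First I would record the elementary tensor identity
\[
\kappa^{(k)}_{\M\otimes\A}[a_1\otimes c_1,\ldots,a_k\otimes c_k]=\kappa^{(k)}_{\M}[a_1,\ldots,a_k]\otimes(c_1 c_2\cdots c_k)\qquad(a_i\in\M,\ c_i\in\A),
\]
proved by induction on $k$ from the moment--cumulant recursion: any $\pi\in NC(k)$ with $\pi\ne 1_k$ has an interval block that is a proper subset, and the recursive insertion of such a block preserves the form ``element of $\M$ tensored with the ordered product of the $c_i$ used so far''; subtracting the lower terms from $(\E\otimes\mathrm{id}_\A)[(a_1\otimes c_1)\cdots(a_k\otimes c_k)]=\E[a_1\cdots a_k]\otimes(c_1\cdots c_k)$ settles $\pi=1_k$. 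By a parallel induction, using that $\A$-invariance holds for arbitrary polynomials in the $t_j$'s, the $\B\otimes 1_\A$-valued cumulants of the $\tilde x_j$ satisfy $\kappa^{(k)}_{\M\otimes\A}[b_0\tilde x_j^{\,d_1}b_1,\ldots,b_{k-1}\tilde x_j^{\,d_k}b_k]=\kappa^{(k)}_{\M}[b_0 x_j^{\,d_1}b_1,\ldots,b_{k-1}x_j^{\,d_k}b_k]\otimes 1_\A$.

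Next I would combine the two. Expanding each $\tilde x_j^{\,d_l}$ into $\sum_{i_l}x_{i_l}^{\,d_l}\otimes u_{i_l,j}^{\,d_l}$ and using multilinearity of cumulants, the bimodule property (the $b_l\in\B\otimes 1_\A$ do not touch the $\A$-leg), and the tensor identity, I get
\[
\kappa^{(k)}_{\M\otimes\A}\bigl[b_0\tilde x_j^{\,d_1}b_1,\ldots,b_{k-1}\tilde x_j^{\,d_k}b_k\bigr]=\sum_{i_1,\ldots,i_k=1}^{n}\kappa^{(k)}_{\M}\bigl[b_0 x_{i_1}^{\,d_1}b_1,\ldots,b_{k-1}x_{i_k}^{\,d_k}b_k\bigr]\otimes u_{i_1,j}^{\,d_1}u_{i_2,j}^{\,d_2}\cdots u_{i_k,j}^{\,d_k}.
\]
Since $x_1,\ldots,x_n$ are free over $\B$, the vanishing of mixed free cumulants kills every term with $i_1,\ldots,i_k$ not all equal, and by identical distribution each surviving term equals $\kappa^{(k)}_{\M}[b_0 x_1^{\,d_1}b_1,\ldots,b_{k-1}x_1^{\,d_k}b_k]$, so the sum collapses to $\kappa^{(k)}_{\M}[b_0 x_1^{\,d_1}b_1,\ldots]\otimes\sum_{\alpha=1}^{n}u_{\alpha,j}^{\,d_1}u_{\alpha,j}^{\,d_2}\cdots u_{\alpha,j}^{\,d_k}$. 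Comparing with $\kappa^{(k)}_{\M}[b_0\tilde x_j^{\,d_1}b_1,\ldots]=\kappa^{(k)}_{\M}[b_0 x_j^{\,d_1}b_1,\ldots]\otimes 1_\A=\kappa^{(k)}_{\M}[b_0 x_1^{\,d_1}b_1,\ldots]\otimes 1_\A$ gives
\[
\kappa^{(k)}_{\M}\bigl[b_0 x_1^{\,d_1}b_1,\ldots,b_{k-1}x_1^{\,d_k}b_k\bigr]\otimes\Bigl(\textstyle\sum_{\alpha=1}^{n}u_{\alpha,j}^{\,d_1}u_{\alpha,j}^{\,d_2}\cdots u_{\alpha,j}^{\,d_k}-1_\A\Bigr)=0
\]
in $\B\otimes\A$ for every $j$. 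Since $b\otimes c=0$ with $b\ne 0$ forces $c=0$, nonvanishing of the cumulant yields $\sum_{\alpha=1}^n u_{\alpha,j}^{\,d_1}\cdots u_{\alpha,j}^{\,d_k}=1_\A$ for all $j$, as claimed.

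The only genuinely fiddly point I anticipate is the pair of inductions showing that $\A$-invariance and the tensor decomposition each commute with the recursive construction of operator-valued free cumulants — in particular tracking, block by block through the interval-peeling recursion, that the $\A$-leg accumulates exactly the ordered product of the relevant $u$-entries. These are routine operator-valued combinatorics in the spirit of Speicher's work; everything else is formal bookkeeping, and the coproduct of $\A$ is not needed at all.
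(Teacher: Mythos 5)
Your argument is correct, but it takes a recognizably different route from the paper's. The paper stays at the level of moments: it expands $\E\left[b_0x_1^{d_1}b_1\cdots x_1^{d_k}b_k\right]\otimes 1_\A$ via invariance, applies the moment--cumulant formula to every summand, and runs an induction on $k$ that tracks which noncrossing partitions can survive — this is where the class $NC^{\A,\dd}(k)$ of partitions whose blocks are all $\A$-sequences enters, together with the blockwise identity $\sum_{\ii\geq\pi}u^{\dd}_{\ii,1}=1_\A$ — before comparing with the plain moment--cumulant expansion to isolate the top cumulant. You instead lift the invariance directly to the cumulant level: the factorization $\kappa^{(k)}[a_1\otimes c_1,\ldots,a_k\otimes c_k]=\kappa^{(k)}[a_1,\ldots,a_k]\otimes c_1\cdots c_k$ is indeed valid (the interval-peeling recursion always inserts a block's value at its original position, so the $\A$-leg accumulates the $c_i$ in left-to-right order for every noncrossing partition), and combined with the standard fact that equal $\B\otimes 1_\A$-valued joint distributions force equal cumulants, it reduces the whole proposition to a single multilinear expansion; freeness and identical distribution then collapse the sum exactly as in the paper's final step. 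Your route trades the paper's $NC^{\A,\dd}$ bookkeeping and induction on $k$ for the tensor lemma (the Banica--Curran--Speicher/Curran-style device), which gives a shorter main argument and makes transparent why only the full block matters; the paper's version keeps everything inside the moment calculus and needs no auxiliary statement about cumulants of elementary tensors. The two inductions you flag are genuinely routine, so I see no gap; only note the typo in your final comparison, where $\kappa^{(k)}_{\M}[b_0\tilde x_j^{\,d_1}b_1,\ldots]$ should read $\kappa^{(k)}_{\M\otimes\A}[b_0\tilde x_j^{\,d_1}b_1,\ldots]$.
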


\begin{proof}
By the definition of $\A$-invariant, we have
 $$\E\left[b_0x^{d_1}_1b_1\right]  =\sum_{\alpha=1}^n \E\left[b_0x^{d_1}_\alpha b_1\right] \otimes u^{d_1}_{\alpha, 1}.$$

Notice that $\kappa^{(1)}\left[b_0x^{d_1}_1b_1\right]=\E\left[b_0x^{d_1}_1b_1\right]$ and $x_i$'s are identically distributed, it follows that 
 $$\kappa^{(1)}\left[b_0x^{d_1}_1b_1\right]  =\sum_{\alpha=1}^n \kappa^{(1)}\left[b_0x^{d_1}_1b_1\right] \otimes u^{d_1}_{\alpha, 1}.$$
By comparing the coefficient of $\kappa^{(1)}\left[b_0x^{d_1}_1b_1\right]$, we have $\kappa^{(1)}\left[b_0x^{d_1}_1b_1\right]=0$ unless $\sum\limits_{\alpha=1}^n u^{d_1}_{\alpha,1}=1$. 
Since $x_i$'s are identically distributed, replacing $1$ by $j$, we get the statement is true for $k=1$.

Assume that the statement is true for $1,\cdots,k-1$. 

$$
\begin{aligned}
& \mathbb{E}\left[b_0 x^{d_1}_1 b_1 x^{d_2}_1 \cdots x^{d_k}_k b_k\right] \otimes 1 \\
= & \sum_{\ii \in[n]^k} \mathbb{E}\left[b_0 x_{i_1}^{d_1} b_1 x_{i_2}^{d_2} \cdots x_{i_k}^{d_k} b_k\right]\otimes u^\dd_{\ii, 1} \\
= & \sum_{\ii \in[n]^k} \sum_{\pi \in N C(k)} \kappa^{(\pi)}\left[b_0 x_{i_1}^{d_1}, b_1 x_{i_2}^{d_2}, \cdots, b_{k-1}x_{i_k}^{d_k} b_k\right] \otimes u^\dd_{\ii, 1}\\
= & \sum_{\pi \in N C(k)} \sum_{\ii \in[n]^k} \kappa^{(\pi)}\left[b_0 x_{i_1}^{d_1}, b_1 x_{i_2}^{d_2}, \cdots, b_{k-1}x_{i_k}^{d_k} b_k\right] \otimes u^\dd_{\ii, 1}
\end{aligned}
$$

Because that $x_i$'s are freely independent,  for a given partition $\pi$ ,  the cumulant $\kappa^{(\pi)}\left[b_0 x_{i_1}^{d_1}, b_1 x_{i_2}^{d_2}, \cdots, b_{k-1}x_{i_k}^{d_k} b_k\right]0$ does not vanish only if each block of $\pi$ contains the same $x_i$. This implies that $\ker\ii\geq \pi$. Thus, the last term of the equation becomes 

$$\sum_{\pi \in N C(k)} \sum_{\substack{\ii \in[n]^k\\  \pi\leq \ker\ii }}\kappa^{(\pi)}\left[b_0 x_{i_1}^{d_1}, b_1 x_{i_2}^{d_2}, \cdots ,b_{k-1}x_{i_k}^{d_k} b_k\right] \otimes u^\dd_{\ii, 1}$$

Again, since the $x_i$'s are identically distributed and all  blocks of each partition $\pi$ contains the same $x_i$, we can replace all the elements in the above term with $x_1$.  Therefore, we obtain

\begin{equation*}\label{MC-for x_1}
\sum_{\pi \in N C(k)} \sum_{\substack{\ii \in[n]^k\\  \pi\leq \ker\ii }}\kappa^{(\pi)}\left[b_0 x_{1}^{d_1}, b_1 x_{1}^{d_2}, \cdots, b_{k-1} x_{1}^{d_k} b_k\right] \otimes u^\dd_{\ii, 1}
\end{equation*}

By induction, all possible non-vanishing terms in the above equation correspond to those partitions $\pi \in NC^{\A,\dd}(k)$ that have at least two blocks and the whole block $1_k$. Thus, we obtain

\begin{equation*}\label{MC-for x_1}
\sum_{\pi \in NC^{\A,\dd}(k),\pi\neq 1_k} \sum_{\substack{\ii \in[n]^k\\  \pi\leq \ker\ii }}\kappa^{(\pi)}\left[b_0 x_{1}^{d_1}, b_1 x_{1}^{d_2}, \cdots, b_{k-1} x_{1}^{d_k} b_k\right] \otimes u^\dd_{\ii, 1} + \sum\limits_{\alpha=1}^n\kappa^{(k)}\left[b_0 x_{1}^{d_1}, b_1 x_{1}^{d_2}, \cdots, b_{k-1} x_{1}^{d_k} b_k\right] \otimes u^{d_1}_{\alpha,1}u^{d_2}_{\alpha,1}\cdots u^{d_k}_{\alpha,1}
\end{equation*}

For fixed $\pi \in NC^{\A,\dd}(k)$, notice that$ \sum\limits_{\ii \in[n]^k, \pi\leq \ker\ii } u^\dd_{\ii, 1} =1_\A $,  we get 
$$
\begin{aligned}
& \E\left[b_0 x^{d_1}_1 b_1 x^{d_2}_1 \cdots x^{d_k}_1 b_k\right] \otimes 1_\A \\
= &\sum_{\pi \in NC^{\A,\dd}(k),\pi\neq 1_k} \kappa^{(\pi)}\left[b_0 x_{1}^{d_1}, b_1 x_{1}^{d_2}, \cdots, b_{k-1} x_{1}^{d_k} b_k\right]\\
&+ \sum\limits_{\alpha=1}^n\kappa^{(k)}\left[b_0 x_{1}^{d_1}, b_1 x_{1}^{d_2}, \cdots, b_{k-1} x_{1}^{d_k} b_k\right] \otimes u^{d_1}_{\alpha,1}u^{d_2}_{\alpha,1}\cdots u^{d_k}_{\alpha,1}
\end{aligned}
$$

On the other hand, by the moment-cumulants formula we have 

$$ \E\left[b_0 x^{d_1}_1 b_1 x^{d_2}_1 \cdots x^{d_k}_k b_k\right] \otimes 1_\A 
= \sum_{\pi \in NC(k)} \kappa^{(\pi)}\left[b_0 x_{1}^{d_1}, b_1 x_{1}^{d_2}, \cdots, b_{k-1} x_{1}^{d_k} b_k\right]
$$

Again, by  induction on $k$ for vanishing cumulants, we have

$$
\begin{aligned}
& \mathbb{E}\left[b_0 x^{d_1}_1 b_1 x^{d_2}_1 \cdots x^{d_k}_k b_k\right] \otimes 1_A \\
= &\sum_{\pi \in NC^{\A,\dd}(k),\pi\neq 1_k} \kappa^{(\pi)}\left[b_0 x_{1}^{d_1}, b_1 x_{1}^{d_2}, \cdots, b_{k-1} x_{1}^{d_k} b_k\right]+ \kappa^{(k)}\left[b_0 x_{1}^{d_1}, b_1 x_{1}^{d_2}, \cdots, b_{k-1} x_{1}^{d_k} b_k\right] 
\end{aligned}
$$

Therefore, we have 

$$\kappa^{(k)}\left[b_0 x_{1}^{d_1}, b_1 x_{1}^{d_2}, \cdots, b_{k-1} x_{1}^{d_k} b_k\right] = \sum\limits_{\alpha=1}^n\kappa^{(k)}\left[b_0 x_{1}^{d_1}, b_1 x_{1}^{d_2}, \cdots, b_{k-1} x_{1}^{d_k} b_k\right] \otimes u^{d_1}_{\alpha,1}u^{d_2}_{\alpha,1}\cdots u^{d_k}_{\alpha,1},$$
which shows that $\kappa^{(k)}\left[b_0 x_{1}^{d_1}, b_1 x_{1}^{d_2}, \cdots, b_{k-1} x_{1}^{d_k} b_k\right]=0$ unless $\sum\limits_{\alpha=1}^n u^{d_1}_{\alpha,1}u^{d_2}_{\alpha,1}\cdots u^{d_k}_{\alpha,1}=1_\A.$

Replacing $1$ by $j$, then the proof is done.
\end{proof}

Apply Proposition \ref{quantumgroup-Block identity}, we get the free part of Theorem \ref{maximal}.
Recall the following de Finetti-type theorem for infinite quantum exchangeable sequences of $*$-random variables:
\begin{theorem}\label{curran's de Finetti}
Let $(\domain,\phi)$ be a $W^*$-probability space, $\phi$ is normal faithful and $\domain$ is generated by an infinite sequence of quantum exchangeable random variables $(x_i)_{i\in \N}$. 
Then, there is a unital $W^*$-subalgebra $\range$ and a $\phi$-preserving conditional expectation $\E:\domain\rightarrow \range$ such that $(x_i)_{i\in \N}$ are freely independent in $(\domain,\E)$ with identical $\range$-valued distributions.
\end{theorem}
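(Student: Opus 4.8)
The plan is to realize the conditional expectation as an average over the Haar states of the quantum permutation groups $C(\qs,n)$, and then to verify freeness with amalgamation by a Weingarten computation combined with the cumulant characterization of freeness \cite{Sp1} recalled above. Since the generators $u_{i,j}$ of $C(\qs,n)$ are self-adjoint projections, the coaction sends $t_i^{*}$ to $\sum_k t_k^{*}\otimes u_{k,i}$ with the \emph{same} $u_{k,i}$, so nothing new happens when $*$-entries are allowed: the $*$-case runs parallel to the self-adjoint case of \cite{KS}.

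\textbf{Step 1: the algebra $\range$ and the expectation $\E$.} For each $n$ I would let $\alpha_n\colon\domain\to\domain\,\bar\otimes\,C(\qs,n)$ be the normal extension of the coaction with $\alpha_n(x_i)=\sum_{k=1}^{n}x_k\otimes u_{k,i}$ for $i\le n$ and $\alpha_n(x_i)=x_i\otimes 1$ for $i>n$, and set $E_n=(\mathrm{id}\otimes h_n)\circ\alpha_n$, where $h_n$ is the Haar state of $C(\qs,n)$. Each $E_n$ is a normal, unital, $\phi$-preserving conditional expectation, and, since $\phi$ is normal and faithful, a noncommutative reversed-martingale convergence argument as in \cite{Ko,KS} shows that $E_n\to\E$ in $L^{2}(\domain,\phi)$ and $\sigma$-weakly, where $\E\colon\domain\to\range$ is the unique $\phi$-preserving normal conditional expectation onto the tail algebra $\range:=\bigcap_n W^{*}(x_j:j\ge n)$. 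This provides the $(\domain,\E)$ of the statement.

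\textbf{Step 2: identical $\range$-valued distributions.} Applying $(\mathrm{id}\otimes h_n)$ to the $C(\qs,n)$-invariance identity for $(x_1,\dots,x_n)$ and using that $h_n(u_{i_1,j}\cdots u_{i_k,j})$ is independent of $j$, together with $\E=\lim_n E_n$ and the $\range$-bimodularity of $\E$, one gets $\E[b_0x_i^{d_1}b_1\cdots b_{k-1}x_i^{d_k}b_k]=\E[b_0x_{\ell}^{d_1}b_1\cdots b_{k-1}x_{\ell}^{d_k}b_k]$ for all $i,\ell$, all $b_m\in\range$ and all $d_m\in\{1,*\}$, so the $x_i$ have the same $\range$-valued distribution.

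\textbf{Step 3: freeness over $\range$.} By \cite{Sp1} it suffices to show that the mixed $\range$-valued free cumulants $\kappa^{(k)}(b_0x_{i_1}^{d_1}b_1,\dots,b_{k-1}x_{i_k}^{d_k}b_k)$ vanish whenever $\ker(i_1,\dots,i_k)\neq 1_k$. Expanding a moment via the moment–cumulant formula, replacing $\E$ by $\lim_n(\mathrm{id}\otimes h_n)\circ\alpha_n$, and inserting the Weingarten expansion $h_n(u_{i_1,l_1}\cdots u_{i_k,l_k})=\sum_{\pi\le\ker\mathbf{i},\ \sigma\le\ker\mathbf{l}}\mathrm{Wg}_n(\pi,\sigma)$ (which is supported on $NC(k)\times NC(k)$, with $\mathrm{Wg}_n(\pi,\sigma)$ of order $n^{|\pi\vee\sigma|-k}$ and leading term at $\pi=\sigma$), an induction on $k$ shows that the surviving cumulants are supported on noncrossing partitions whose blocks sit inside the blocks of $\ker\mathbf{i}$. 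When $\ker(i_1,\dots,i_k)\neq 1_k$ the full block is excluded, so $\kappa^{(k)}=0$; hence the $x_i$ are free over $\range$. Together with Step 2 this gives the theorem.

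\textbf{Anticipated obstacle.} The hard part will be Step 3: controlling the $n\to\infty$ asymptotics of $\mathrm{Wg}_n$ while carrying the noncommuting coefficients $b_m\in\range$ through the estimates (they cannot simply be moved past the $x_i$), and, in Step 1, verifying that the quantum symmetries descend to the tail algebra so that the averaged invariance becomes exact in the limit. The construction of $\E$ by reversed-martingale convergence and the identical-distribution claim are otherwise routine given that $\phi$ is normal and faithful.
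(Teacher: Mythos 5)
The paper does not prove this statement at all: it is quoted as a known result — Curran's free de Finetti theorem for $*$-random variables \cite{Cu2}, building on \cite{KS,Ko} — and the proof in those sources is exactly the tail-algebra-plus-Weingarten argument you outline (Haar-state averages $E_n=(\mathrm{id}\otimes h_n)\circ\alpha_n$ converging to the $\phi$-preserving expectation onto the tail algebra, then vanishing of mixed $\range$-valued cumulants via the Weingarten expansion for $C(\qs,n)$), so your proposal takes essentially the same route as the reference the paper relies on. The one slip worth flagging is the asserted order $n^{|\pi\vee\sigma|-k}$ for $\mathrm{Wg}_n(\pi,\sigma)$ — the correct estimate is $O\bigl(n^{|\pi\vee\sigma|-|\pi|-|\sigma|}\bigr)$, with leading diagonal term of order $n^{-|\pi|}$ — but this does not change the structure of the argument, and the genuinely delicate points (modular invariance of the tail algebra so that Takesaki's theorem yields the $\phi$-preserving conditional expectation, and the fact that $\range$ lies in the fixed-point algebra of the coactions so that coefficients $b\in\range$ can be carried through the Weingarten estimates) are correctly identified in your list of anticipated obstacles.
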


Therefore, we obtain the free part of Theorem \ref{de Finetti}. To derive the classical part of Theorem \ref{maximal} and Theorem \ref{de Finetti}, it suffices to apply the classical de Finetti theorem and consider classical cumulants.  The distributional symmetry passes to the $\range$-valued distributions because that $\phi$ is faithful and $\E$ a $\phi$-preserving conditional expectation.


\section{Properties of universal relations of quantum unitary groups}

In Definition \ref{quantum groups via vanishing conditions}, the quantum groups are defined through equations with identities and  vanishing conditions. 
In this section, we will demonstrate that the vanishing conditions are redundant; that is, the equations of sums with the identity are sufficient to determine all the universal conditions. 
Consequently, the failure to be quantum subgroups implies the failure of the equation of sums over blocks. 
As a result, the associated distributional symmetries will impose additional vanishing conditions on cumulants. 
This is the fundamental reason that the quantum groups in Definition \ref{quantum groups via vanishing conditions} are maximal, in the sense that the corresponding de Finetti-type theorem fails if a sequence of random variables satisfies symmetries other than the maximal one.

To proceed, we introduce several notations on operator-valued matrices algebras.
As for quantum groups,  for a  $C^*$-algebra $\A$, we will assume that $u=(u_{i,j})$ is an element in $M_n(\A)$ whose  $i,j$-th entry is $u_{i,j}$.
In addition, we assume that both $u$ and $\bar{u}=(u_{i,j}^*)$ are unitaries in $M_n(\A)$.
Recall that in Proposition \ref{quantumgroup-Block identity}, the equations involve sums that contain products of elements with the same sub-index, resembling the Hadamard product in the scalar case. Therefore, we can establish the following definition in the operator-valued framework.

\begin{definition} Let $u,v\in M_n(\A)$ be $n\times n$ matrices over $\A$.  
The Hadamard product of $u=(u_{i,j}),v=(v_{i,j})$ is defined as the matrix $w=(w_{i,j})\in M_n(\A)$ with $w_{i,j}=u_{i,j}v_{i,j}$ and is denoted by $u\circ v$.  
The Hadamard product of $k$ copies of $u$ is denoted by $u^{\circ k}$.
\end{definition} 

The following proposition demonstrates that the Hadamard product with a unitary element is contractive.

\begin{proposition}\label{Contraction of Hadarmad product} Let $u=(u_{i,j})$ be a unitary element in $\M_n(\A)$. Then, $ \|u\circ v\|\leq \|v\|$ for any $v\in M_n(\A)$.
\end{proposition}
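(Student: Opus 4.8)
The plan is to realize the Hadamard product $u\circ v$ as a compression of the ordinary matrix product of amplifications, exploiting that Hadamard multiplication by a unitary is a Schur multiplier of norm one. Concretely, I would work inside $M_n(\A)\otimes M_n(\C)\cong M_{n^2}(\A)$ and use the diagonal isometry $V:\A^n\to\A^n\otimes\C^n$ (or rather the corresponding map on column spaces) that sends the $i$-th standard basis vector $e_i$ to $e_i\otimes e_i$. The key identity to establish is that, up to compressing by $V$ on both sides, $u\circ v$ equals $u\cdot(\text{something built from }v)$, or more symmetrically that the entrywise product is extracted from $u\otimes v$ by sandwiching with these "diagonal embedding" isometries: $(V^*)(u\otimes v)(V)$ has $(i,j)$-entry $u_{i,j}v_{i,j}$, i.e. it equals $u\circ v$. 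Since $V$ is an isometry ($V^*V=1$), it follows immediately that $\|u\circ v\|=\|V^*(u\otimes v)V\|\le\|u\otimes v\|=\|u\|\,\|v\|=\|v\|$, using that $u$ is unitary so $\|u\|=1$.

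First I would fix notation: identify $M_n(\A)$ with $B(H^n)$ where $H$ is the Hilbert $\A$-module (or simply work at the level of $C^*$-algebras viewing $\A\subseteq B(\mathcal H)$ concretely, so that $M_n(\A)\subseteq B(\mathcal H^n)$). Define $V\colon \mathcal H^n\to \mathcal H^n\otimes\C^n$ by $V(\xi_1,\dots,\xi_n)=\sum_{i=1}^n(0,\dots,\xi_i,\dots,0)\otimes e_i$ where $\xi_i$ sits in slot $i$; equivalently $V=\sum_i (e_i e_i^*)\otimes e_i$ in suitable tensor notation. One checks $V^*V=\sum_i e_ie_i^*\cdot(\text{inner product of }e_i,e_i)=1_{\mathcal H^n}$, so $V$ is an isometry. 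Then for $u,v\in M_n(\A)$ I would compute $V^*(u\otimes v)V$ entrywise: its $(i,j)$ block is $e_i^*\,u\,e_j$ times $e_i^*\,v\,e_j$... more carefully, $\langle e_i, V^*(u\otimes v)V e_j\rangle = \langle Ve_i, (u\otimes v)Ve_j\rangle = \langle e_i\otimes e_i, (u\otimes v)(e_j\otimes e_j)\rangle = \langle e_i,ue_j\rangle\langle e_i,ve_j\rangle = u_{i,j}v_{i,j}$. This is exactly $(u\circ v)_{i,j}$, giving the operator identity $u\circ v=V^*(u\otimes v)V$. The norm estimate then follows in one line as above.

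The main obstacle — really the only subtlety — is bookkeeping in the operator-valued (non-scalar) setting: $u\otimes v$ lives in $M_n(\A)\otimes M_n(\C)$, which should be read as $M_n(M_n(\A))=M_{n^2}(\A)$, and the isometry $V$ must be taken with entries in $\C$ (not $\A$) so that conjugation by it is completely positive and the computation $V^*(u\otimes v)V$ makes sense with the right module structure. One must be careful that the tensor factor carrying $v$ and the tensor factor indexed by $V$ are the same $\C^n$, so that the "diagonal" $e_i\otimes e_i$ picks out matching indices; writing everything in terms of matrix units $\{E_{ij}\}$ and the rank-one projections $\{E_{ii}\}$ makes this unambiguous. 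Once the identity $u\circ v = V^*(u\otimes v)V$ is pinned down with $V$ a scalar isometry, the inequality $\|u\circ v\|\le\|u\otimes v\| = \|u\|\|v\| = \|v\|$ is automatic. I would also remark that the hypothesis can be weakened to $\|u\|\le 1$ (unitarity is only used through $\|u\|=1$), which is the form in which the contraction property will be applied later.
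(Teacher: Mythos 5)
Your overall strategy --- realizing the Hadamard product as a compression by a scalar isometry and reducing to a norm bound on an amplified object --- is sound and genuinely different from the paper's proof, which instead computes $w^*w$ for $w=u\circ v$ directly, uses the unitarity relation $\sum_{\beta}u_{\beta,i}^*u_{\beta,j}=\delta_{i,j}$ to exhibit $w^*w=\mathrm{Diag}_n\bigl(\sum_{\alpha}v_{\alpha,i}^*v_{\alpha,i}\bigr)-S^*S$ for an explicit matrix $S$, and concludes $w^*w\leq\mathrm{Diag}_n\bigl(\sum_{\alpha}v_{\alpha,i}^*v_{\alpha,i}\bigr)\leq\|v\|^2$. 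Your route, once completed, actually proves the stronger and cleaner statement $\|u\circ v\|\leq\|u\|\,\|v\|$ for arbitrary $u,v\in M_n(\A)$, whereas the paper's computation leans on unitarity of $u$ in an essential-looking (though ultimately inessential) way.

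However, there is one genuine gap exactly at the point you flag as ``bookkeeping'': you never say in which algebra $u\otimes v$ lives, and the two natural candidates each break one half of your argument. If $u\otimes v$ denotes the element of $M_n(\A)\otimes_{\min}M_n(\A)=M_{n^2}(\A\otimes_{\min}\A)$ with entries $u_{i,j}\otimes v_{k,l}$, then indeed $\|u\otimes v\|=\|u\|\,\|v\|$, but the compression $V^*(u\otimes v)V$ has $(i,j)$ entry $u_{i,j}\otimes v_{i,j}\in\A\otimes\A$, not $u_{i,j}v_{i,j}\in\A$, and passing to the latter requires the multiplication map $\A\otimes_{\min}\A\to\A$, which is unbounded for noncommutative $\A$. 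If instead $u\otimes v$ denotes the matrix in $M_{n^2}(\A)$ with $\bigl((i,k),(j,l)\bigr)$ entry $u_{i,j}v_{k,l}$ (products taken in $\A$), then $V^*(u\otimes v)V=u\circ v$ does hold with your scalar isometry $V$, but the identity $\|u\otimes v\|=\|u\|\,\|v\|$ is no longer automatic: this entrywise Kronecker product is not multiplicative over a noncommutative ring, so it cannot be treated as a spatial tensor product. Your appeal to ``$M_n(\A)\otimes M_n(\C)$'' conflates the two pictures, since $v$ has entries in $\A$, not in $\C$. The fix is short: in $M_{n^2}(\A)$ one has the factorization $u\otimes v=(u\otimes 1_n)(1_n\otimes v)$, where $(u\otimes 1_n)_{(i,k),(j,l)}=u_{i,j}\delta_{k,l}$ and $(1_n\otimes v)_{(i,k),(j,l)}=\delta_{i,j}v_{k,l}$ are each unitarily equivalent to a direct sum of $n$ copies of $u$, respectively $v$, so that $\|u\otimes v\|\leq\|u\otimes 1_n\|\,\|1_n\otimes v\|=\|u\|\,\|v\|$ (the order of the factors matters and matches the order $u_{i,j}v_{i,j}$ in the paper's definition of $u\circ v$). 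With that one line inserted, your proof is complete, and your closing remark is then correct: only $\|u\|\leq 1$ is used, not unitarity.
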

\begin{proof}
Let $w=u\circ v=(u_{i,j}v_{i,j})$.  Then,  $w^*=(v^*_{j,i}u^*_{j,i})_{i,j}$ and 

       $$w^*w=\left(\sum\limits_{\alpha=1}^n v^*_{\alpha,i}u^*_{\alpha,i}u_{\alpha,j}v_{\alpha,j} \right)_{i,j}$$
Since $u$ is a unitary,   for all $\alpha\in [n]$ and $i\neq j$, we have 
$$v^*_{\alpha,i}u^*_{\alpha,i}u_{\alpha,i}v_{\alpha,i}= v^*_{\alpha,i}v_{\alpha,i}- \sum\limits_{\beta\neq \alpha}^n v^*_{\alpha,i}u^*_{\beta,i}u_{\beta,i}v_{\alpha,i}$$ 
and 
  $$v^*_{\alpha,i}u^*_{\alpha,i}u_{\alpha,j}v_{\alpha,j}=-\sum\limits_{\beta\neq \alpha}^n v^*_{\alpha,i}u^*_{\beta,i}u_{\beta,j}v_{\alpha,j}$$ 

Therefore,
 $$w^*w= Diag_n(\sum\limits_{\alpha=1}^n v^*_{\alpha,i}v_{\alpha,i})_i-( \sum\limits_{\beta\neq \alpha}^n v^*_{\alpha,i}u^*_{\beta,j}u_{\beta,j}v_{\alpha,j})_{i,j}$$
 where $Diag_n(x_i)_i$ denotes the diagonal matrix $(\delta_{i,j} x_i)_{i,j}.$
 
 Let $S$ be the $n^2\times n$ matrix with entries $S_{(\beta,\alpha),j}$, where $\beta, \alpha, j=1,\cdots, n$, b defined as follows: 
 $$
S_{(\beta,\alpha),i}=\left\{\begin{array}{lr}
u_{\beta,i}v_{\alpha,i}& \text{if}\, \beta\neq \alpha\\
0& \text{if}\,  \beta= \alpha\\
\end{array}\right.
 $$

Then, we have 
$$w^*w=Diag_n\left(\sum\limits_{\alpha=1}^n v^*_{\alpha,i}v_{\alpha,i}\right)-S^*S\leq Diag_n\left(\sum\limits_{\alpha=1}^n v^*_{\alpha,i}v_{\alpha,i}\right)$$

Thus, 
$$\| w^*w\|\leq \| Diag_n(\sum\limits_{\alpha=1}^n v^*_{\alpha,i}v_{\alpha,i})\|$$

Notice that $Diag_n(\sum\limits_{\alpha=1}^n v^*_{\alpha,i}v_{\alpha,i})$ is the diagonal part of $ v^*v$, it follows that 
$$\|v\|^2=\| v^*v\|\geq \max\limits_{i=1,\cdots,n}\| \sum\limits_{\alpha=1}^n v^*_{\alpha,i}v_{\alpha,i})\|=\| Diag_n(\sum\limits_{\alpha=1}^n v^*_{\alpha,i}v_{\alpha,i})\|\geq \| w^*w\|=\|w\|^2.$$
This completes the proof.
\end{proof}

For $u=(u_{i,j})\in \M_n(\A)$, following the operations on quantum groups, we denote by  $\bar{u}=(\bar{u}_{i,j})$ with
$\bar{u}_{i,j}=u^*_{i,j}$. Consequently, we have $\bar{u}=(u^t)^*=u^{(t*)}$, where $u^t$ is the transpose of $u$ with $(i,j)$-the entry being $u_{j,i}$.

\begin{lemma}\label{invertible}
Assume that $u,\bar{u}\in \M_n(\A)$ are unitary.
Let $k\geq 2$ and $d_1,\cdots,d_k\in\{1,*\}$.  
Let $\bar{d_1},\cdots\bar{d_k}\in\{1,t*\}$ such that $\bar{d_j}=1$ if and only $d_j=1.$                       
If $\sum\limits_{\alpha=1}^n u^{d_1}_{\alpha,j}u^{d_2}_{\alpha,j}\cdots u^{d_k}_{\alpha,j}=1_{\A}$ for all $j=1,\cdots,n$, then 
\begin{enumerate}
\item $ u^{\bar{d}_2}\circ\cdots \circ u^{\bar{d}_{k}}=\overline{u^{\bar{d}_1}}.$  
\item $u^{d_2}_{i,j}\cdots u^{d_k}_{i,j}=\left(u^{d_1}_{i,j}\right)^*$ for all $i,j=1,\cdots,n.$
\item $\sum\limits_{\alpha=1}^n u^{d_2}_{\alpha,j}\cdots u^{d_k}_{\alpha,j}u^{d_1}_{\alpha,j}=1_{\A}$ for all $j=1,\cdots,n$.
\end{enumerate}
\end{lemma}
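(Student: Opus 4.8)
The plan is to recognize that (1) and (2) are the same assertion in matrix versus entrywise form, and that both amount to a single matrix identity which follows from the Hadamard-contraction estimate of Proposition~\ref{Contraction of Hadarmad product} together with one elementary positivity observation. First note that for each $\ell$ the matrix $u^{\bar d_\ell}$ is either $u$ (when $d_\ell=1$) or $\bar u=u^{(t*)}$ (when $d_\ell=*$); in particular it is a unitary in $M_n(\A)$, and its $(i,j)$-entry is exactly $u^{d_\ell}_{i,j}$. Hence, writing $w:=u^{\bar d_2}\circ u^{\bar d_3}\circ\cdots\circ u^{\bar d_k}$, the $(i,j)$-entry of $w$ is $u^{d_2}_{i,j}\cdots u^{d_k}_{i,j}$, while the $(i,j)$-entry of $\overline{u^{\bar d_1}}$ is $\bigl(u^{d_1}_{i,j}\bigr)^{*}$. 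Thus the matrix equality $w=\overline{u^{\bar d_1}}$ of (1) is precisely the entrywise statement (2), and it suffices to prove this one equality.

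Next I would establish the norm bound $\|w\|\le 1$. Since the Hadamard product is associative, $w=u^{\bar d_2}\circ\bigl(u^{\bar d_3}\circ\cdots\circ u^{\bar d_k}\bigr)$, and because $u^{\bar d_2}$ is unitary, Proposition~\ref{Contraction of Hadarmad product} gives $\|w\|\le\|u^{\bar d_3}\circ\cdots\circ u^{\bar d_k}\|$. Iterating $k-2$ times and using $\|u^{\bar d_k}\|=1$ yields $\|w\|\le 1$.

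Now set $v:=u^{\bar d_1}$, so that $v$ and $\bar v$ are both unitary, and recall $v^{t}=\bar v^{*}$; hence $v^{t}$ is unitary with $(v^{t})^{-1}=(v^{t})^{*}=\bar v$. Consider $M:=v^{t}w\in M_n(\A)$. On one hand $\|M\|\le\|v^{t}\|\,\|w\|\le 1$. On the other hand the hypothesis $\sum_{\alpha}u^{d_1}_{\alpha,j}u^{d_2}_{\alpha,j}\cdots u^{d_k}_{\alpha,j}=1_\A$ says exactly that every diagonal entry of $M$ equals $1_\A$. The key elementary fact is then: if $M\in M_n(\A)$ satisfies $\|M\|\le 1$ and $M_{j,j}=1_\A$ for all $j$, then $M=1_{M_n(\A)}$. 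Indeed $\|M\|\le 1$ forces $M^{*}M\le 1_{M_n(\A)}$, whence $(M^{*}M)_{j,j}\le 1_\A$; but $(M^{*}M)_{j,j}=M_{j,j}^{*}M_{j,j}+\sum_{i\neq j}M_{i,j}^{*}M_{i,j}=1_\A+\sum_{i\neq j}M_{i,j}^{*}M_{i,j}$, so $\sum_{i\neq j}M_{i,j}^{*}M_{i,j}=0$ and therefore $M_{i,j}=0$ for all $i\neq j$. Applying this to $M=v^{t}w$ gives $v^{t}w=1_{M_n(\A)}$, i.e. $w=(v^{t})^{-1}=\bar v=\overline{u^{\bar d_1}}$, which is (1), and hence (2).

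Finally, (3) follows at once from (2): since $u^{d_2}_{\alpha,j}\cdots u^{d_k}_{\alpha,j}=\bigl(u^{d_1}_{\alpha,j}\bigr)^{*}$, we get $\sum_{\alpha}u^{d_2}_{\alpha,j}\cdots u^{d_k}_{\alpha,j}u^{d_1}_{\alpha,j}=\sum_{\alpha}\bigl(u^{d_1}_{\alpha,j}\bigr)^{*}u^{d_1}_{\alpha,j}=(v^{*}v)_{j,j}=1_\A$, using that $v=u^{\bar d_1}$ is unitary. The only non-formal ingredient in the whole argument is the estimate $\|w\|\le 1$, which is supplied directly by Proposition~\ref{Contraction of Hadarmad product}; the rest is bookkeeping with the bar/transpose conventions plus the one-line positivity argument, so I do not anticipate a serious obstacle.
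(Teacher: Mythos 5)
Your proposal is correct and follows essentially the same route as the paper: identify the Hadamard product $u^{\bar d_2}\circ\cdots\circ u^{\bar d_k}$ as the entrywise matrix in question, use Proposition~\ref{Contraction of Hadarmad product} iteratively to bound its norm by $1$, observe that multiplying by the unitary $(u^{\bar d_1})^t$ yields a matrix of norm at most $1$ whose diagonal entries are all $1_\A$, and conclude it is the identity. Your explicit positivity argument for the step ``norm $\le 1$ plus identity diagonal forces the identity matrix'' fills in a detail the paper leaves implicit, and your derivation of (3) from unitarity of $u^{\bar d_1}$ matches the paper's in substance.
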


\begin{proof}
Notice that $ u^{\bar{d}_2}\circ\cdots \circ u^{\bar{d}_{k}}=(u^{d_2}_{i,j}\cdots u^{d_{k}}_{i,j})_{i,j}$,
thus the diagonal elements of $(u^{\bar{d}_1})^t(u^{\bar{d}_2}\circ\cdots \circ u^{\bar{d}_{k}})$ are $\sum\limits_{\alpha=1}^n u^{d_1}_{\alpha,j}u^{d_2}_{\alpha,j}\cdots u^{d_k}_{\alpha,j}=1$.
By proposition \ref{Contraction of Hadarmad product}, we have   $\|(u^{\bar{d}_1})^t(u^{\bar{d}_2}\circ\cdots \circ u^{\bar{d}_{k}})\|\leq 1$, it follows that 
$$ (u^{\bar{d}_1})^t (u^{\bar{d}_2}\circ\cdots \circ u^{\bar{d}_{k}})=1_{M_n(\A)}.$$

Since $(u^{\bar{d}_1})^t$ is either $ \bar{u}^*$ or $u^*$, both of which are unitary,  this proves (1). 
It follows that $$(u^{\bar{d}_2}\circ\cdots \circ u^{\bar{d}_{k}})=\left((u^{\bar{d}_1})^t\right)^*=\overline{u^{\bar{d}_1}}=\left((u^{d_1}_{i,j})^*\right)_{i,j},$$
and
$$ (u^{\bar{d}_2}\circ\cdots \circ u^{\bar{d}_{k}})(u^{\bar{d}_1})^t =1_{M_n(\A)}.$$
Notice that $(u^{\bar{d}_1})^*=(u^{\bar{d}_2}\circ\cdots \circ u^{\bar{d}_{k}})$ whose $(i,j)$-th entries are given by $u^{d_2}_{j,i}\cdots u^{d_{k}}_{j,i}$, thus (2) holds.\\ 
By checking the entries of $ (u^{\bar{d}_2}\circ\cdots \circ u^{\bar{d}_{k}})u^{\bar{d}_1}$, we get 
$$\sum\limits_{\alpha=1}^n u^{d_2}_{\alpha,j}\cdots u^{d_k}_{\alpha,j}u^{d_1}_{\alpha,j}=1_{\A}$$ 
for all $j=1,\cdots,n$.

\end{proof}

Recall that an element $a\in \A$ is called a  partial isometry  if both $aa^*$ and $a^*a$ are projections.

\begin{lemma}\label{ABAB Case} Assume that $u,\bar{u}\in \M_n(\A)$ are unitary. If $\sum\limits_{\alpha=1}^n u^*_{\alpha,j}u_{\alpha,j}u^*_{\alpha,j}u_{\alpha,j}=1_{\A}$ for all $j=1,\cdots,n$,
then we have that 
\begin{enumerate}
\item $\sum\limits_{\alpha=1}^n u_{\alpha,j}u^*_{\alpha,j}u_{\alpha,j}u^*_{\alpha,j}=1$ for all $j=1,\cdots,n$
\item All $u_{i,j}$'s are partial isometries 
\item  $u^*_{i,k}u_{j,k}=u_{i,k}u^*_{j,k}=0$ for all $k=1,\cdots,n$, $i\neq j$
\end{enumerate} 
\end{lemma}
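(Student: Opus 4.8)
The plan is to mimic the argument used earlier for the $C(\qh_0,n)\subset C(\rqh,n)$ inclusion and in the proof of Proposition \ref{quantumgroup-Block identity}(3), since the hypothesis here is exactly the block identity for the alternating word of length $4$ at a fixed index $j$, now stated for every $j$. First I would set, for a fixed $j$, $p_{\alpha}=u_{\alpha,j}u^*_{\alpha,j}$, so that the hypothesis reads $\sum_{\alpha=1}^n p_\alpha^2=1_\A$. Since $u$ is unitary, $\sum_{\alpha=1}^n u_{\alpha,j}u^*_{\alpha,j}$ need not be $1_\A$ in general — but summing over $j$ it gives $n1_\A$ by the row/column trace argument of Lemma \ref{Unitary in M(A)}. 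So I would sum the hypothesis over $j$: $\sum_{j}\sum_{\alpha} (u_{\alpha,j}u^*_{\alpha,j})^2 = n1_\A$. On the other hand each $p_\alpha=u_{\alpha,j}u^*_{\alpha,j}$ is a positive element, and from $\sum_\alpha p_\alpha^2 = 1_\A$ (fixed $j$) one cannot immediately conclude $p_\alpha\le 1_\A$; instead I would use the trace/summation trick: $\sum_j\sum_\alpha p_\alpha = n1_\A$ while $\sum_j\sum_\alpha p_\alpha^2 = n1_\A$, and since $0\le p_\alpha$ we want $p_\alpha^2\le p_\alpha$, which holds iff $p_\alpha\le 1_\A$. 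To get $p_\alpha\le 1_\A$ for each fixed $j$: from the hypothesis $\sum_\alpha p_\alpha^2=1_\A$ with $p_\alpha\ge 0$ we get $p_\alpha^2\le 1_\A$, hence $\|p_\alpha\|\le 1$, hence $p_\alpha\le 1_\A$. Then $p_\alpha^2\le p_\alpha$ for every $\alpha$ and every $j$, so $\sum_j\sum_\alpha(p_\alpha - p_\alpha^2)\ge 0$ is a sum of positive elements equal to $n1_\A - n1_\A = 0$, forcing $p_\alpha - p_\alpha^2 = 0$, i.e. each $p_\alpha=u_{\alpha,j}u^*_{\alpha,j}$ is a projection. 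This is exactly the argument in the excerpt around inequalities (\ref{eq6})--(\ref{eq7}).

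Having shown $u_{\alpha,j}u^*_{\alpha,j}$ is a projection for all $\alpha,j$, I would next observe that $\sum_\alpha u_{\alpha,j}u^*_{\alpha,j} = \sum_\alpha p_\alpha$ and $\sum_\alpha p_\alpha^2 = \sum_\alpha p_\alpha = 1_\A$ (fixed $j$), which gives item (1): $\sum_{\alpha=1}^n u_{\alpha,j}u^*_{\alpha,j}u_{\alpha,j}u^*_{\alpha,j} = \sum_\alpha p_\alpha^2 = 1_\A$. Moreover, since the $p_\alpha$ are projections summing to $1_\A$ for fixed $j$, they are pairwise orthogonal: $p_\alpha p_\beta = 0$ for $\alpha\ne\beta$ (standard fact — a sum of projections equal to the identity has mutually orthogonal summands, seen by multiplying $1_\A=\sum p_\gamma$ on both sides by $p_\alpha$ and using $p_\alpha^2=p_\alpha$, positivity gives $\sum_{\beta\ne\alpha}p_\alpha p_\beta p_\alpha = 0$ hence each $p_\alpha p_\beta p_\alpha=0$ hence $p_\beta^{1/2}p_\alpha = 0$). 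Applying the same reasoning to $\bar u$ in place of $u$ — $\bar u$ is unitary and the hypothesis is symmetric under $u\leftrightarrow\bar u$ up to relabeling since $\sum_\alpha \bar u^*_{\alpha,j}\bar u_{\alpha,j}\bar u^*_{\alpha,j}\bar u_{\alpha,j} = \sum_\alpha u_{\alpha,j}u^*_{\alpha,j}u_{\alpha,j}u^*_{\alpha,j}$ which we just proved equals $1_\A$ — we get that $u^*_{\alpha,j}u_{\alpha,j}$ is also a projection for all $\alpha,j$. That establishes item (2): each $u_{i,j}$ is a partial isometry.

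For item (3), the orthogonality of matrix entries: I would use the pairwise orthogonality of the column projections. Fix $k$; we have shown $q_\alpha := u^*_{\alpha,k}u_{\alpha,k}$ and $p_\alpha := u_{\alpha,k}u^*_{\alpha,k}$ are projections, and (from $\sum_\alpha q_\alpha = 1_\A$, $\sum_\alpha p_\alpha = 1_\A$ for fixed $k$) that $\{q_\alpha\}_\alpha$ and $\{p_\alpha\}_\alpha$ are each families of mutually orthogonal projections. Now for $i\ne j$ consider $u^*_{i,k}u_{j,k}$. Writing $u_{j,k} = u_{j,k}q_j = u_{j,k}u^*_{j,k}u_{j,k} = p_j u_{j,k}$ and similarly $u^*_{i,k} = u^*_{i,k}p_i = u^*_{i,k}u_{i,k}u^*_{i,k}$, I would compute $u^*_{i,k}u_{j,k} = u^*_{i,k}u_{i,k}u^*_{i,k}\, u_{j,k}u^*_{j,k}u_{j,k}$. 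The middle product $u^*_{i,k}\,u_{j,k}u^*_{j,k} = u^*_{i,k}p_j$; I would like to relate $u^*_{i,k}p_j$ to the orthogonality $p_ip_j=0$. The cleanest route is probably to sum: from $1_\A = \sum_\alpha u^*_{\alpha,k}u_{\alpha,k}$ and the hypothesis, expand $\sum_{\alpha,\beta} u^*_{\alpha,k}u_{\beta,k}u^*_{\beta,k}u_{\alpha,k}$ — wait, more to the point, use that $u$ unitary gives $\sum_\alpha u^*_{\alpha,i}u_{\alpha,j}=\delta_{i,j}1_\A$ is about different subscripts; here the off-diagonal statement $u^*_{i,k}u_{j,k}=0$ with the \emph{first} index varying. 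I expect the main obstacle to be exactly this last step: turning the pairwise orthogonality of the projections $\{p_\alpha\}$, $\{q_\alpha\}$ (which involves products of the form $p_\alpha p_\beta$, i.e. $u_{\alpha,k}u^*_{\alpha,k}u_{\beta,k}u^*_{\beta,k}$) into the ``bare'' orthogonality $u^*_{\alpha,k}u_{\beta,k}=0$ and $u_{\alpha,k}u^*_{\beta,k}=0$. I would handle it by using $u_{\alpha,k}=p_\alpha u_{\alpha,k}$ and $u_{\beta,k}=p_\beta u_{\beta,k}$ together with $p_\alpha p_\beta=0$ to get $u^*_{\alpha,k}u_{\beta,k} = u^*_{\alpha,k}p_\alpha p_\beta u_{\beta,k}=0$; symmetrically with $q$'s for $u_{\alpha,k}u^*_{\beta,k} = u_{\alpha,k}q_\alpha q_\beta u^*_{\beta,k} = 0$ after first rewriting $u_{\alpha,k}=u_{\alpha,k}q_\alpha$. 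Since $u^*_{\alpha,k}u_{\beta,k}$ begins with $u^*_{\alpha,k} = u^*_{\alpha,k}u_{\alpha,k}u^*_{\alpha,k} = q_\alpha u^*_{\alpha,k}$... I need the \emph{range} projection $p_\alpha$ on the left of $u_{\beta,k}$, so I write $u^*_{\alpha,k} = u^*_{\alpha,k}p_\alpha$ (valid since $p_\alpha = u_{\alpha,k}u^*_{\alpha,k}$ is the range projection of $u_{\alpha,k}$, hence acts as identity on the right of $u^*_{\alpha,k}$) and $u_{\beta,k} = p_\beta u_{\beta,k}$, so $u^*_{\alpha,k}u_{\beta,k} = u^*_{\alpha,k}p_\alpha p_\beta u_{\beta,k} = 0$. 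Likewise $u_{\alpha,k}u^*_{\beta,k} = u_{\alpha,k}q_\alpha q_\beta u^*_{\beta,k} = 0$ using $u_{\alpha,k} = u_{\alpha,k}q_\alpha$ and $u^*_{\beta,k} = q_\beta u^*_{\beta,k}$. This completes the proof of (3), and with it the lemma.
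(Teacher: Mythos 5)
Your overall strategy---use positivity together with the normalizations $\sum_{\alpha}u^*_{\alpha,j}u_{\alpha,j}=\sum_{\alpha}u_{\alpha,j}u^*_{\alpha,j}=1_\A$ to force the fourth-order sums to collapse onto projections, then deduce orthogonality of the entries in a fixed column---is the right one and is essentially the paper's. But there is a concrete misstep at the very first line: with $p_\alpha=u_{\alpha,j}u^*_{\alpha,j}$ the hypothesis does \emph{not} read $\sum_\alpha p_\alpha^2=1_\A$. The hypothesis is $\sum_\alpha\bigl(u^*_{\alpha,j}u_{\alpha,j}\bigr)^2=1_\A$, i.e.\ $\sum_\alpha q_\alpha^2=1_\A$ with $q_\alpha=u^*_{\alpha,j}u_{\alpha,j}$, whereas $\sum_\alpha p_\alpha^2=1_\A$ is precisely conclusion (1), which you are supposed to prove. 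Because of this swap your derivation of (1) is circular: you obtain ``$p_\alpha$ is a projection'' from the misread hypothesis, deduce (1) from $\sum_\alpha p_\alpha^2=\sum_\alpha p_\alpha=1_\A$, and then feed (1) into the ``$\bar{u}$ in place of $u$'' step that is meant to supply the projection property of the other product. Run correctly, your first paragraph shows that the $q_\alpha$ are projections, and then ``$\sum_\alpha q_\alpha^2=\sum_\alpha q_\alpha=1_\A$'' merely restates the hypothesis, not (1). The gap is easily repaired in either of two ways: quote Lemma \ref{invertible}(3) with $(d_1,\dots,d_4)=(*,1,*,1)$, whose cyclic shift gives (1) directly---this is exactly what the paper does---or invoke the standard $C^*$-fact that $a^*a$ is a projection iff $aa^*$ is, so that $q_\alpha$ being a projection makes $u_{\alpha,j}$ a partial isometry and $p_\alpha$ a projection, after which $\sum_\alpha p_\alpha^2=\sum_\alpha p_\alpha=1_\A$ follows from the unitarity of $\bar{u}$. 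Note also that the detour of summing over $j$ is unnecessary: since \emph{both} $u$ and $\bar{u}$ are assumed unitary, $\sum_\alpha q_\alpha=1_\A$ and $\sum_\alpha p_\alpha=1_\A$ already hold for each fixed $j$, so the positivity argument runs pointwise in $j$, as in the paper.

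Your treatment of (3) is correct and takes a somewhat different route from the paper's: you use that projections summing to $1_\A$ are mutually orthogonal and then strip off the source and range projections of the partial isometries, writing $u^*_{i,k}=u^*_{i,k}p_i$ and $u_{j,k}=p_ju_{j,k}$ so that $u^*_{i,k}u_{j,k}=u^*_{i,k}p_ip_ju_{j,k}=0$, and symmetrically with the $q$'s. The paper instead expands the double sum $n1_\A=\sum_{\alpha,\beta}u_{\alpha,\beta}u^*_{\alpha,\beta}u_{\alpha,\beta}u^*_{\alpha,\beta}$, substitutes $u^*_{\alpha,\beta}u_{\alpha,\beta}=1_\A-\sum_{\gamma\neq\beta}u^*_{\alpha,\gamma}u_{\alpha,\gamma}$, and concludes that a sum of positive terms vanishes. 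Both arguments rest on the same positivity principle; yours is arguably cleaner and needs no extra computation once (1) and (2) are in hand.
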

\begin{proof}
\begin{enumerate}
\item 
This is the case $k=4$, $(d_1,\cdots,d_4)=(*,1,*,1)$ in Lemma \ref{invertible}, thus we have 
$$\sum\limits_{\alpha=1}^n u_{\alpha,j}u^*_{\alpha,j}u_{\alpha,j}u^*_{\alpha,j}=1,$$ for all $j=1,\cdots,n$.

\item 
Notice that $u_{\alpha,j}u^*_{\alpha,j}$'s are positive and 
$$u_{\alpha,j}u^*_{\alpha,j}\leq 1_\A.$$
Hence, we have  $u_{\alpha,j}u^*_{\alpha,j}u_{\alpha,j}u^*_{\alpha,j}\leq u_{\alpha,j}u^*_{\alpha,j}$. 
However, we  also have 
$$1_{\A}=\sum\limits_{\alpha=1}^n u^*_{\alpha,j}u_{\alpha,j}u^*_{\alpha,j}u_{\alpha,j}\leq \sum\limits_{\alpha=1}^n u^*_{\alpha,j}u_{\alpha,j}=1_{\A}.$$
It follows that $u^*_{\alpha,j}u_{\alpha,j}=u^*_{\alpha,j}u_{\alpha,j}u^*_{\alpha,j}u_{\alpha,j}$. 

Therefore, $u^*_{\alpha,j}u_{\alpha,j}$'s are projections.  Similarly,  $u_{\alpha,j}u^*_{\alpha,j}$'s  are also projections.

\item  Since $ 1_\A=\sum\limits_{\gamma=1}^n u_{\alpha,\gamma}^*, u_{\alpha, \gamma}$ for $\alpha=1,\cdots,n,$ and  $u_{\alpha,j}u^*_{\alpha,j}$'s  are also projections,  we have 
$$ 
\begin{aligned}
n 1_\A & =\sum_{\alpha=1}^n \sum_{\beta=1}^n u_{\alpha, \beta} u_{\alpha, \beta}^*  \\
&=\sum_{\alpha=1}^n \sum_{\beta=1}^n u_{\alpha, \beta} u_{\alpha, \beta}^* u_{\alpha, \beta} u_{\alpha, \beta}^* \\
& =\sum_{\alpha=1}^n \sum_{\beta=1}^n\left(u_{\alpha, \beta}\left(1_\A-\sum_{\substack{\gamma=1\\ \gamma \neq \beta}}^n u_{\alpha,\gamma}^*, u_{\alpha, \gamma}\right) u_{\alpha, \beta}^*\right) \\
& =n 1_\A-\sum_{\substack{\alpha, \beta, \gamma=1 \\ \gamma \neq \beta}}^n u_{\alpha, \beta} u_{\alpha, \gamma}^* u_{\alpha, \gamma} u_{\alpha, \beta}^* \\
&
\end{aligned}
$$

Since all $ u_{\alpha, \beta} u_{\alpha, \gamma}^* u_{\alpha, \gamma} u_{\alpha, \beta}^* $ are positive, they must all zero which means $u_{\alpha, \gamma} u_{\alpha, \beta}^*=0 $ for all $\alpha,\beta=1,\cdots,n$. 

Similarly, we also have $u^*_{\alpha, \gamma} u_{\alpha, \beta}=0 $ for all $\alpha,\beta=1,\cdots,n$.

\end{enumerate}
\end{proof}

\begin{lemma}\label{AABB Case} Assume that $u,\bar{u}\in \M_n(\A)$ are unitary. If $\sum\limits_{\alpha=1}^n u^*_{\alpha,j}u^*_{\alpha,j}u_{\alpha,j}u_{\alpha,j}=1$ for all $j=1,\cdots,n$,
then 
\begin{enumerate}
\item all $u_{i,j}$'s are partial isometries 
\item  $u^*_{i,k}u_{j,k}=u_{i,k}u^*_{j,k}=0$ for all $k=1,\cdots,n$, $i\neq j$
\item $u_{i,j}u^*_{i,j}=u^*_{i,j}u_{i,j}$ for all $i,j=1,\cdots,n$
\end{enumerate} 
\end{lemma}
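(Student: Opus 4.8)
\textbf{Proof proposal for Lemma \ref{AABB Case}.}

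The plan is to mimic the structure of the proof of Lemma \ref{ABAB Case}, adapting the positivity arguments to the relation $\sum_{\alpha=1}^n u^*_{\alpha,j}u^*_{\alpha,j}u_{\alpha,j}u_{\alpha,j}=1_\A$. First I would apply Lemma \ref{invertible} with $k=4$ and $\dd=(*,*,1,1)$: its conclusion (2) gives $u^*_{i,j}u_{i,j}u_{i,j}=(u^*_{i,j})^*=u_{i,j}$, hence $u^*_{i,j}u_{i,j}u_{i,j}=u_{i,j}$; multiplying on the left by $u^*_{i,j}$ gives $(u^*_{i,j}u_{i,j})^2=u^*_{i,j}u_{i,j}$, so $u^*_{i,j}u_{i,j}$ is a projection for every $i,j$. (Alternatively, a positivity estimate as in Lemma \ref{ABAB Case} works: since $u^*_{\alpha,j}u_{\alpha,j}\le 1_\A$ we get $u^*_{\alpha,j}u^*_{\alpha,j}u_{\alpha,j}u_{\alpha,j}=u^*_{\alpha,j}(u^*_{\alpha,j}u_{\alpha,j})u_{\alpha,j}\le u^*_{\alpha,j}u_{\alpha,j}$ and summing forces equality termwise, i.e.\ $u^*_{\alpha,j}u^*_{\alpha,j}u_{\alpha,j}u_{\alpha,j}=u^*_{\alpha,j}u_{\alpha,j}$, which again shows $(u^*_{\alpha,j}u_{\alpha,j})$ is idempotent after one more left-multiplication by $u^*$.) Then conclusion (1) of Lemma \ref{invertible} — or symmetry after noting that the hypothesis for $\bar u$ reads $\sum_\alpha u_{\alpha,j}u_{\alpha,j}u^*_{\alpha,j}u^*_{\alpha,j}=1_\A$ — yields that $u_{i,j}u^*_{i,j}$ is also a projection, giving part (1).

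For part (2), I would run the same column-sum/positivity argument used in Lemma \ref{ABAB Case}(3). Since $u$ is unitary, $n1_\A=\sum_{\alpha,\beta}u_{\alpha,\beta}u^*_{\alpha,\beta}=\sum_{\alpha,\beta}u_{\alpha,\beta}u^*_{\alpha,\beta}u_{\alpha,\beta}u^*_{\alpha,\beta}$ using part (1); expanding the inner $u^*_{\alpha,\beta}u_{\alpha,\beta}=1_\A-\sum_{\gamma\ne\beta}u^*_{\alpha,\gamma}u_{\alpha,\gamma}$ produces $n1_\A-\sum_{\alpha,\beta,\gamma\ne\beta}u_{\alpha,\beta}u^*_{\alpha,\gamma}u_{\alpha,\gamma}u^*_{\alpha,\beta}$, and since every summand is positive they all vanish, forcing $u_{\alpha,\gamma}u^*_{\alpha,\beta}=0$ for $\beta\ne\gamma$ (note this is the row index that is fixed and the column indices that differ, so after relabelling it is $u_{i,k}u^*_{j,k}=0$ for $i\ne j$). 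The adjoint/$\bar u$ version of the same computation gives $u^*_{i,k}u_{j,k}=0$ for $i\ne j$.

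Part (3) is the genuinely new point and the one I expect to be the main obstacle. The idea is to use the orthogonality relations from part (2) to collapse the quartic relation. Fix $j$. Using $u^*_{\alpha,j}u_{\beta,j}=0$ for $\alpha\ne\beta$ together with the row-unitarity $\sum_{\alpha}u^*_{\alpha,j}u_{\alpha,j}=1_\A$, I would compute $\bigl(\sum_\alpha u^*_{\alpha,j}u^*_{\alpha,j}u_{\alpha,j}u_{\alpha,j}\bigr)$ in two ways, or better, insert a resolution of the identity: since $1_\A=\sum_\gamma u_{\gamma,j}u^*_{\gamma,j}$ (column-unitarity of $\bar u$) and the cross terms $u_{\alpha,j}u^*_{\gamma,j}=0$ for $\alpha\ne\gamma$ kill everything except $\gamma=\alpha$, one gets that the maps $p_{\alpha,j}:=u^*_{\alpha,j}u_{\alpha,j}$ and $q_{\alpha,j}:=u_{\alpha,j}u^*_{\alpha,j}$ are both sub-projections of $1_\A$ summing to $1_\A$, and the relation $\sum_\alpha p_{\alpha,j}q_{\alpha,j}'\cdots$ forces $p_{\alpha,j}$ and $q_{\alpha,j}$ to coincide. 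Concretely: from $u^*_{\alpha,j}u_{\alpha,j}u_{\alpha,j}=u_{\alpha,j}$ (shown above) multiply on the right by $u^*_{\alpha,j}$ to get $u^*_{\alpha,j}u_{\alpha,j}u_{\alpha,j}u^*_{\alpha,j}=u_{\alpha,j}u^*_{\alpha,j}$, i.e.\ $p_{\alpha,j}q_{\alpha,j}=q_{\alpha,j}$, so $q_{\alpha,j}\le p_{\alpha,j}$; summing over $\alpha$ gives $1_\A=\sum_\alpha q_{\alpha,j}\le\sum_\alpha p_{\alpha,j}=1_\A$, whence $q_{\alpha,j}=p_{\alpha,j}$ for every $\alpha$, which is exactly part (3). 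The care needed is in justifying the termwise equality from the summed inequality, which is legitimate because all the operators involved are positive and the partial sums are monotone — the same device already used twice above.
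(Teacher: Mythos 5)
Your argument for part (3) is sound, but the step you use to establish part (1) is broken, and since your part (2) is made to depend on part (1), the proof as ordered has a genuine gap. Concretely: left-multiplying $u^*_{i,j}u_{i,j}u_{i,j}=u_{i,j}$ by $u^*_{i,j}$ gives $u^*_{i,j}u^*_{i,j}u_{i,j}u_{i,j}=u^*_{i,j}u_{i,j}$, \emph{not} $(u^*_{i,j}u_{i,j})^2=u^*_{i,j}u_{i,j}$: the two middle factors cannot be transposed, since at this stage nothing is known to commute. The positivity variant you offer terminates at the same identity and likewise does not show idempotency. (Also, $\sum_\alpha u_{\alpha,j}u_{\alpha,j}u^*_{\alpha,j}u^*_{\alpha,j}=1_\A$ is not "the hypothesis read for $\bar u$" --- the lemma has only one hypothesis; that relation must be derived, e.g.\ from the cyclic statement (3) of Lemma \ref{invertible}.) The repair is already contained in your own part (3): from $u^*_{i,j}u_{i,j}u_{i,j}=u_{i,j}$, right-multiplication by $u^*_{i,j}$ gives $p_{i,j}q_{i,j}=q_{i,j}$ with $p=u^*u$, $q=uu^*$; for positive contractions this yields $q_{i,j}=p_{i,j}q_{i,j}p_{i,j}\le p_{i,j}^2\le p_{i,j}$ (a step worth writing out, since you invoke $q\le p$ before knowing $p$ is a projection), then $\sum_\alpha q_{\alpha,j}=\sum_\alpha p_{\alpha,j}=1_\A$ forces $p_{\alpha,j}=q_{\alpha,j}$, which is (3), and finally $p_{i,j}^2=p_{i,j}q_{i,j}=q_{i,j}=p_{i,j}$ gives (1). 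So the correct order is: (3) first, then (1), then (2). On (2) itself, the expansion you perform (following Lemma \ref{ABAB Case}) produces $u_{\alpha,\gamma}u^*_{\alpha,\beta}=0$ with the \emph{first} index fixed and $\gamma\ne\beta$; the lemma fixes the \emph{second} index, and "relabelling" does not convert one into the other --- you need the companion computation starting from $\sum_{\alpha,\beta}u^*_{\alpha,\beta}u_{\alpha,\beta}u^*_{\alpha,\beta}u_{\alpha,\beta}=n1_\A$ with $u_{\alpha,\beta}u^*_{\alpha,\beta}=1_\A-\sum_{\gamma\ne\alpha}u_{\gamma,\beta}u^*_{\gamma,\beta}$.

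For comparison, the paper handles (1) and (3) without any positivity or summation: it applies Lemma \ref{invertible} both to the given sequence $(*,*,1,1)$, getting $u_{i,j}=u^*_{i,j}u_{i,j}u_{i,j}$, and to its cyclic shift $(*,1,1,*)$ via statement (3) of that lemma, getting $u_{i,j}=u_{i,j}u_{i,j}u^*_{i,j}$; multiplying these by $u^*_{i,j}$ on the appropriate sides yields both $q_{i,j}=p_{i,j}q_{i,j}$ and $p_{i,j}=p_{i,j}q_{i,j}$, whence $p_{i,j}=q_{i,j}=p_{i,j}^2$ in one stroke. Your summation route to (3) is a legitimate alternative once the order of deductions is fixed, but the algebraic route is shorter and makes (1) immediate.
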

\begin{proof}
This is the case $k=4$, $(d_1,\cdots,d_4)=(*,*,1,1)$ in Lemma \ref{invertible}, we have 
$u_{i,j}=u^*_{i,j}u_{i,j}u_{i,j}$ and thus $u^*_{i,j}=u^*_{i,j}u^*_{i,j}u_{i,j}$
for $i,j=1,\cdots,n.$
Thus, we have 
$$u_{i,j}u^*_{i,j}=u^*_{i,j}u_{i,j}u_{i,j}u^*_{i,j} $$
and 
$$u_{i,j}u^*_{i,j}=u_{i,j}u^*_{i,j}u^*_{i,j}u_{i,j}$$
By Lemma \ref{invertible}, we have $\sum\limits_{\alpha=1}^n u^*_{\alpha,j}u_{\alpha,j}u_{\alpha,ju^*_{\alpha,j}}=1$ for all $j=1,\cdots,n$, so we get
$$u^*_{i,j}u_{i,j}=u^*_{i,j}u_{i,j}u_{i,j}u^*_{i,j} $$
and 
$$u^*_{i,j}u_{i,j}=u_{i,j}u^*_{i,j}u^*_{i,j}u_{i,j}.$$

Therefore, we have 
$$u_{i,j}u^*_{i,j}=u^*_{i,j}u_{i,j}=u_{i,j}u^*_{i,j}u^*_{i,j}u_{i,j}=\left( u^*_{i,j}u_{i,j}\right)^2.$$
It implies statement (1) and (3).  Statement (2) follow the proof in Lemma \ref{ABAB Case}. 
\end{proof}

The two lemmas will be applied to $\rqh$ and $\qh_0$, respectively. 
The main difference between Lemma \ref{ABAB Case} and Lemma \ref{AABB Case} is that the entries satisfying the equations in Lemma \ref{AABB Case} must be normal, a condition that does not occur in Lemma \ref{ABAB Case}.

\begin{proposition} \label{sum of power m}
Assume that $u,\bar{u}\in \M_n(\A)$ are unitary.
Let $\dd \in \{1,*\}^k$, with $k\geq 3$. If $\sum\limits_{\alpha=1}^n u^{d_1}_{\alpha,j}u^{d_2}_{\alpha,j}\cdots u^{d_k}_{\alpha,j}=1_\A$ for all $j=1,\cdots,n$, then
\begin{enumerate}
\item all $u_{i,j}$'s are partial isometries 
\item  $u^*_{i,k}u_{j,k}=u_{i,k}u^*_{j,k}=0$ for all $k=1,\cdots,n$, $i\neq j$
\end{enumerate} 
Further, if $k$ is not even or $\bar{d}$ is not alternating, then $u_{i,j}u^*_{i,j}=u^*_{i,j}u_{i,j}$ for all $i,j=1,\cdots,n$.

\end{proposition}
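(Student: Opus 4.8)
The plan is to fix a pair of indices $i,j$, abbreviate $a:=u_{i,j}$, and first read off from the hypothesis the complete list of word identities satisfied by $a$ and $a^{*}$. Iterating part (3) of Lemma \ref{invertible}, the equation $\sum_{\alpha}u^{d_{1}}_{\alpha,j}\cdots u^{d_{k}}_{\alpha,j}=1_{\A}$ survives every cyclic rotation of $\dd$; feeding each rotation into part (2) of Lemma \ref{invertible} and then restoring the dropped letter by a one-sided multiplication, one finds that for each $l$ the full cyclic word starting at position $l+1$ equals both $u^{d_{l+1}}_{i,j}(u^{d_{l+1}}_{i,j})^{*}$ (read from the left) and $(u^{d_{l}}_{i,j})^{*}u^{d_{l}}_{i,j}$ (read from the right). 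Hence $u^{d_{l+1}}_{i,j}(u^{d_{l+1}}_{i,j})^{*}=(u^{d_{l}}_{i,j})^{*}u^{d_{l}}_{i,j}$ for all $l$, and in particular $u^{d_{1}}_{i,j}\cdots u^{d_{k}}_{i,j}=u^{d_{1}}_{i,j}(u^{d_{1}}_{i,j})^{*}$.

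The argument then splits on whether $\dd$ is cyclically alternating. If it is not, some cyclically consecutive pair has $d_{l}=d_{l+1}$, and the identity above becomes $u^{d_{l}}_{i,j}(u^{d_{l}}_{i,j})^{*}=(u^{d_{l}}_{i,j})^{*}u^{d_{l}}_{i,j}$, i.e. $a$ is normal. Granting normality, $a$ and $a^{*}$ commute, so $u^{d_{1}}_{i,j}\cdots u^{d_{k}}_{i,j}=u^{d_{1}}_{i,j}(u^{d_{1}}_{i,j})^{*}$ reads $a^{r}(a^{*})^{s}=a^{*}a$, where $r,s$ are the numbers of $1$'s and $\ast$'s in $\dd$ and $r+s=k\geq 3$; continuous functional calculus on the normal element $a$ then forces $|z|^{k}=|z|^{2}$ for $z\in\sigma(a)$, hence $|z|\in\{0,1\}$, so $a^{*}a$ has spectrum in $\{0,1\}$ and is a projection, making $a$ a partial isometry. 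If $\dd$ \emph{is} cyclically alternating, then $k$ is even, say $k=2m$, and $m\geq 2$ since $k\geq 3$; after rotating $\dd$ to the form $(\ast,1)^{m}$ (every cyclically alternating sequence of even length is such a rotation), part (2) of Lemma \ref{invertible} gives $a(a^{*}a)^{m-1}=a$, whence $(a^{*}a)^{m}=a^{*}a$ by multiplying by $a^{*}$ on the left and $(aa^{*})^{m}=aa^{*}$ by multiplying by $a^{*}$ on the right and regrouping; a positive element satisfying $p^{m}=p$ with $m\geq 2$ has spectrum in $\{0,1\}$ and is a projection. In all cases both $a^{*}a$ and $aa^{*}$ are projections, which is statement (1).

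Statement (2) follows, once (1) holds for every entry, from the counting argument already used in the proof of Lemma \ref{ABAB Case}: unitarity of $u$ and $\bar u$ gives $\sum_{\alpha}u_{\alpha,k}u^{*}_{\alpha,k}=\sum_{\alpha}u^{*}_{\alpha,k}u_{\alpha,k}=1_{\A}$ for each $k$, so $n\,1_{\A}=\sum_{\alpha,k}u_{\alpha,k}u^{*}_{\alpha,k}=\sum_{\alpha,k}u_{\alpha,k}u^{*}_{\alpha,k}u_{\alpha,k}u^{*}_{\alpha,k}$ because $u_{\alpha,k}u^{*}_{\alpha,k}$ is a projection; substituting $u^{*}_{\alpha,k}u_{\alpha,k}=1_{\A}-\sum_{\beta\neq\alpha}u^{*}_{\beta,k}u_{\beta,k}$ rewrites $n\,1_{\A}$ as $n\,1_{\A}$ minus a sum of positive elements, which must vanish term by term, giving $u_{i,k}u^{*}_{j,k}=0$ for $i\neq j$; exchanging the roles of $u$ and $u^{*}$ gives $u^{*}_{i,k}u_{j,k}=0$. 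Finally, the closing assertion is precisely the non-alternating branch: the hypothesis that $k$ is odd or $\bar d$ is not alternating says exactly that $\dd$ is not cyclically alternating (an odd-length cycle cannot be $2$-coloured, while for even $k$ linear and cyclic alternation coincide), so $u_{i,j}u^{*}_{i,j}=u^{*}_{i,j}u_{i,j}$.

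The step I expect to be the main obstacle is the cyclic bookkeeping in the first paragraph: arranging the two readings of a full cyclic word so that they involve the \emph{same} letter, so that comparing them yields normality rather than a tautology, and keeping careful track of which rotation is needed (and of the fact that every rotation inherits the sum identity via part (3) of Lemma \ref{invertible}). A secondary point is that the dichotomy $|z|^{k}=|z|^{2}\Rightarrow|z|\in\{0,1\}$ genuinely uses $k\geq 3$; this is where that hypothesis is consumed, and it explains why normality cannot be expected in the cyclically alternating case, where $R$-diagonal- or Haar-unitary-type entries occur.
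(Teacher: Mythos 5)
Your proof is correct, but it takes a genuinely different route from the paper's. The paper proves (1) and (2) by compressing the length-$k$ identity down to a length-$4$ one: it applies Lemma \ref{invertible} to see that $w=(u^{d_2}_{i,j}\cdots u^{d_k}_{i,j})_{i,j}$ is unitary, uses Proposition \ref{Contraction of Hadarmad product} to get $\|v\|\le 1$ for $v=(u^{d_{k-1}}_{i,j}u^{d_k}_{i,j})_{i,j}$, squeezes the diagonal of $v^*v$ between $1_{\A}$ and $\|v^*v\|\le 1$ to conclude $\sum_{\alpha}(u^{d_{k-1}}_{\alpha,j}u^{d_k}_{\alpha,j})^*(u^{d_{k-1}}_{\alpha,j}u^{d_k}_{\alpha,j})=1_{\A}$, and then invokes Lemma \ref{ABAB Case} or Lemma \ref{AABB Case} depending on whether $d_{k-1}\neq d_k$ or $d_{k-1}=d_k$; the normality clause is obtained by cyclically rotating so that two equal consecutive letters land at the end, which forces the \ref{AABB Case} situation. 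You instead stay with the full-length entrywise word identities of Lemma \ref{invertible}(2), propagated through all cyclic rotations via part (3), and compare the two readings of the same rotated word to get $u^{d_{l+1}}_{i,j}(u^{d_{l+1}}_{i,j})^{*}=(u^{d_l}_{i,j})^{*}u^{d_l}_{i,j}$; this yields normality exactly when the sequence is not cyclically alternating, after which functional calculus on $a^r(a^*)^s=a^*a$ with $r+s=k\ge 3$ gives the partial isometry property, while the cyclically alternating (even) case is handled by the idempotency $(a^*a)^m=a^*a$, $m\ge 2$. Your derivation of (2) redoes the positivity/counting argument of Lemma \ref{ABAB Case} from (1) alone rather than citing the two lemmas. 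What your approach buys is a cleaner conceptual split (normal entries versus genuinely non-normal, R-diagonal-type entries) and an explicit identification of where $k\ge 3$ is consumed; what the paper's approach buys is uniformity, since everything funnels into the two already-proved length-$4$ lemmas. Both ultimately rest on the same Hadamard-contraction mechanism through Lemma \ref{invertible}, so neither is more elementary; the difference is in how the reduction is organized.
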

\begin{proof}By Lemma \ref{invertible}, we know that $w=(u^{d_2}_{i,j}\cdots u^{d_k}_{i,j})_{i,j}=$ is a unitary. 
Let $v=(u^{d_{k-1}}_{i,j}u^{d_k}_{i,j})_{i,j}.$
By Proposition \ref{Contraction of Hadarmad product}, we have $\|v\|\leq 1$. 
Since $u$ and $w$ are unitaries, we have $u_{i,j}u^*_{i,j}$ and $u^*_{i,j}u_{i,j}$ are positive operators less than or equal to $1_{\A}$. 
Therefore, we have 
$$
1_\A=\sum\limits_{\alpha=1}^n \left(u^{d_2}_{\alpha,j}\cdots u^{d_k}_{\alpha,j}\right)^*\left(u^{d_2}_{\alpha,j}\cdots u^{d_k}_{\alpha,j}\right)\leq \sum\limits_{\alpha=1}^n \left(u^{d_{k-1}}_{\alpha,j}u^{d_k}_{\alpha,j}\right)^*\left(u^{d_{k-1}}_{\alpha,j} u^{d_k}_{\alpha,j}\right),
$$
the last term is an element on the diagonal of $v^*v$. Since $\|v^*v\|=\|v\|^2\leq 1$, we have  
$$\sum\limits_{\alpha=1}^n \left(u^{d_{k-1}}_{\alpha,j}u^{d_k}_{\alpha,j}\right)^*\left(u^{d_{k-1}}_{\alpha,j} u^{d_k}_{\alpha,j}\right)=1_\A,$$ which is either the case in Lemma \ref{ABAB Case} and Lemma \ref{AABB Case}.  
It follows that all $u_{i,j}$'s are partial isometries and $u^*_{i,k}u_{j,k}=u_{i,k}u^*_{j,k}=0$ for all $k=1,\cdots,n$, $i\neq j$.

If $1$ and $*$ do not appear alternately in $\dd$, then there exists $1\leq i\leq k-1$ such that $d_i=d_{i+1}$. By the cyclic statement from Lemma \ref{invertible},
we may assume that $d_{k-1}=d_k$.  From the analysis of $v^*v$ above, we find the situation described in  Lemma \ref{ABAB Case}. Therefore, we have $u_{i,j}u^*_{i,j}=u^*_{i,j}u_{i,j}$ for all $i,j=1,\cdots,n$.

If $1$ and $*$  appear alternately in $\dd$ and $k$ is odd, then $d_1=d_k$. It follows that  $\sum\limits_{\alpha=1}^n u^{d_2}_{\alpha,j}\cdots u^{d_k}_{\alpha,j}u^{d_1}_{\alpha,j}=1_\A$ for all $j=1,\cdots,n$. This is again the situation described in  Lemma \ref{ABAB Case}.  The proof is done.

\end{proof}

\begin{corollary}\label{vanishing case}
Assume that $u,\bar{u}\in \M_n(\A)$ are unitary. Given $k\in \N$ and   $\sum\limits_{\alpha=1}^n u^{d_1}_{\alpha,j}u^{d_2}_{\alpha,j}\cdots u^{d_k}_{\alpha,j}=1$ for all $j=1,\cdots,n$.
Let $m$ be the difference of the numbers of $1$ and $*$ appeared in the sequence $(d_1,\cdots,d_k).$  
If $m\geq 1$, then $$\sum\limits_{\alpha=1}^n u^m_{\alpha,j}=1.$$
for all $j=1,\cdots,n$
\end{corollary}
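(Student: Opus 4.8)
The plan is to establish the much stronger entrywise identity $u^{d_1}_{i,j}u^{d_2}_{i,j}\cdots u^{d_k}_{i,j}=u^m_{i,j}$ for all $i,j$; summing over $i$ then converts the hypothesis $\sum_{\alpha=1}^n u^{d_1}_{\alpha,j}\cdots u^{d_k}_{\alpha,j}=1_\A$ immediately into the asserted $\sum_{\alpha=1}^n u^m_{\alpha,j}=1_\A$. Write $a$ and $b$ for the numbers of $1$'s and of $*$'s occurring in $\dd$, so $k=a+b$ and $m=a-b$. If $b=0$, the word is literally $u^k_{i,j}=u^m_{i,j}$ and there is nothing to do; so assume $b\geq 1$. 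Then $k=m+2b\geq 3$, and since $m\geq 1$ gives $a\neq b$, the sequence $\dd$ cannot be alternating of even length; hence $k$ is odd or $\dd$ is not alternating, which is precisely the hypothesis of the last clause of Proposition \ref{sum of power m}.

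Applying Proposition \ref{sum of power m}, each $u_{i,j}$ is a partial isometry with $u_{i,j}u^*_{i,j}=u^*_{i,j}u_{i,j}$, i.e. $u_{i,j}$ is normal. Fix $i,j$, put $v=u_{i,j}$ and $p=vv^*=v^*v$ (a projection). Because $v$ and $v^*$ commute, the word rearranges as $u^{d_1}_{i,j}\cdots u^{d_k}_{i,j}=v^{a}(v^*)^{b}=v^{m}(vv^*)^{b}=v^{m}p^{b}=v^{m}p$, and $vp=vv^*v=v$ forces $v^{m}p=v^{m}$ since $m\geq 1$. Hence $u^{d_1}_{i,j}\cdots u^{d_k}_{i,j}=u^m_{i,j}$ for every $i,j$. (Equivalently, a normal $v$ with $v^*v$ a projection has $\sigma(v)\subseteq\{0\}\cup\mathbb{T}$, on which $z^{a}\bar z^{b}$ and $z^{m}$ agree pointwise because $m\geq 1$.) Summing over $i=\alpha$ then finishes the proof.

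The only genuine ingredient is the normality of the entries $u_{i,j}$, which is exactly the final assertion of Proposition \ref{sum of power m}; the remaining steps are routine manipulation with partial isometries. The point that needs a little care is confirming that the standing assumption $m\geq 1$ really does put us inside the hypothesis of that assertion — in particular that it forces $k\geq 3$ once $b\geq 1$ and rules out the even alternating pattern — since without the commutation $u_{i,j}u^*_{i,j}=u^*_{i,j}u_{i,j}$ one could not collapse the word down to the pure power $u^m_{i,j}$.
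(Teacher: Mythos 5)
Your proof is correct and takes essentially the same route as the paper's: both reduce the main case to Proposition \ref{sum of power m}, obtaining that the entries are normal partial isometries, and then collapse the word $u^{d_1}_{\alpha,j}\cdots u^{d_k}_{\alpha,j}$ to $u^{m}_{\alpha,j}$ before summing. Your single observation that $m\geq 1$ rules out the even alternating pattern (so the normality clause of that proposition always applies once $b\geq 1$ and $k\geq 3$) is a cleaner packaging of the paper's case split on $m\geq 2$ versus $m=1$.
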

\begin{proof}
The statement is trivial for $k=1$. For $k=2$, since $m\geq 1$, we have $\dd=(1,1)$ or $(*,*).$  The statement is true.

For $k>2$, if $m\geq 2$,  then $\dd$ is never alternating. By Proposition \ref{sum of power m}, all $u_{i,j}$'s are normal partial isometries.  If the numbers of $1$ is greater than the number of $*$, then we have  
$$u^{d_1}_{\alpha,j}u^{d_2}_{\alpha,j}\cdots u^{d_k}_{\alpha,j}=u^m_{\alpha,j}.$$
Otherwise we have 
$$u^{d_1}_{\alpha,j}u^{d_2}_{\alpha,j}\cdots u^{d_k}_{\alpha,j}=\left(u_{\alpha,j}^*\right)^m.$$
In both cases, we have 
$$\sum\limits_{\alpha=1}^n u^m_{\alpha,j}=1.$$
If $m=1$ and $\dd$ is not alternating, then $1$ and $*$  appear alternately in $\dd$ and $k$ is odd, the statement holds true.
\end{proof}

\begin{proposition} Let $\A(n)$ be a quantum unitary group generated by $\{u_{i,j}|i,j=1,\cdots,n\}$. 

 \begin{enumerate}
  
\item If $\sum\limits_{\alpha=1}^n u^2_{\alpha, i}=1_{\A(n)} $ for all $i=1,\cdots,n$, then $\A(n)\subset C(\qo,n)$.

\item  If $\sum\limits_{\alpha=1}^n u^2_{\alpha, i}=1_{\A(n)} $ and $\sum\limits_{i=1}^n u_{i,k}=1_{\A(n)} $  for all $i=1,\cdots,n$, then $\A(n)\subset C(\qb_s,n)$.

\item If $\sum\limits_{\alpha=1}^n u^2_{\alpha, i}=1_{\A(n)} $ and there exists an even number $k\geq 4$ such that $\sum\limits_{\alpha=1}^n u^k_{\alpha,i}=1_{\A(n)} $  for all $i=1,\cdots,n$, then $\A(n)\subset C(\qh_s,n)$.

\item If $\sum\limits_{i=1}^n u_{\alpha, i}=1_{\A(n)} $  for all $i=1,\cdots,n$, then  $\A(n)\subset C(\qb,n) $. 
\item If there exist $k\in \N$, $(d_1,\cdots,d_k)\in \{1,*\}^k$ such that $\sum\limits_{\alpha=1}^n u^{d_1}_{\alpha,i}u^{d_2}_{\alpha,i}\cdots u^{d_k}_{\alpha,i}=1$ for all $j=1,\cdots,n$ and  the difference $m$ of the numbers of $1$ and $*$ appeared in the sequence $(d_1,\cdots,d_k)$ is greater than or equal to $3$, then $\A(n)\subset C(\qh_m,n)$.
\item If $\sum\limits_{\alpha=1}^n u_{\alpha, i} u_{\alpha, i} u_{\alpha, i}^* u_{\alpha, i}^*=1_{\A(n)}$ for all $i$, then $\A(n)\subset C(\qh_0,n)$
\item If $\sum\limits_{\alpha=1}^n u_{\alpha, i} u_{\alpha, i}^* u_{\alpha, i}  u_{\alpha, i}^*=1_{\A(n)}$ for all $i$, then$\A(n)\subset C(\rqh,n)$.
\end{enumerate}  
\end{proposition}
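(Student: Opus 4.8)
The plan is to handle all seven cases by a uniform two-step scheme. Each target $C(\mathcal{G},n)$ appearing in Definition \ref{quantum groups via vanishing conditions} is the universal $C^*$-algebra generated by a unitary matrix $(u_{i,j})$ subject to one explicit family of relations, so it suffices to check that the generators of $\A(n)$ — which already form a unitary, $\A(n)$ being a quantum unitary group — satisfy precisely that family; the universal property then yields a $*$-homomorphism $\Phi$ sending generators to generators, and $\Phi$ automatically intertwines the coproducts since both are given by $u_{i,j}\mapsto\sum_k u_{i,k}\otimes u_{k,j}$, exhibiting $\A(n)$ as a quantum subgroup. So in each case, step one extracts structural information about the $u_{i,j}$ from the hypothesis using the lemmas of this section, and step two reads off the defining relations of the target.

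For (1) I apply Lemma \ref{invertible} with $k=2$, $\dd=(1,1)$: conclusion (1) gives $u=\bar u$, i.e.\ $u_{i,j}=u_{i,j}^{*}$, and combined with the unitarity of $u$ this yields $u^{t}u=1_{M_n(\A)}$, that is $\sum_\alpha u_{\alpha,i}u_{\alpha,j}=\delta_{i,j}$, the defining relation of $C(\qo,n)$. For (2) the $C(\qo,n)$ relation just obtained gives $u^{t}=u^{-1}$, while the hypothesis reads $u^{t}\mathbf{1}=\mathbf{1}$ with $\mathbf{1}=(1,\dots,1)^{t}$, hence $u\mathbf{1}=\mathbf{1}$; so both row and column sums of $u$ equal $1$, which are all the relations of $C(\qb_s,n)$. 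For (3) the hypothesis contains $\sum_\alpha u_{\alpha,i}^{2}=1$, so again by (1) all $u_{\alpha,i}$ are self-adjoint with $0\le u_{\alpha,i}^{2}\le 1$; since $k$ is even, $u_{\alpha,i}^{k}=(u_{\alpha,i}^{2})^{k/2}\le u_{\alpha,i}^{2}$, and summing this against $\sum_\alpha u_{\alpha,i}^{k}=1=\sum_\alpha u_{\alpha,i}^{2}$ forces $u_{\alpha,i}^{k}=u_{\alpha,i}^{2}$ for every $\alpha$, whence $\mathrm{spec}(u_{\alpha,i}^{2})\subseteq\{0,1\}$ and $u_{\alpha,i}^{2}$ is an orthogonal projection — the relation defining $C(\qh_s,n)$. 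For (4) the hypothesis $u\mathbf{1}=\mathbf{1}$ together with $u^{*}=u^{-1}$ gives $u^{*}\mathbf{1}=\mathbf{1}$, i.e.\ $\sum_i u_{i,k}^{*}=1$, hence $\sum_i u_{i,k}=1$: both sums equal $1$, which is all $C(\qb,n)$ requires beyond unitarity.

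The substance is in (5)–(7), where a single ``diagonal'' identity $\sum_\alpha w(u_{\alpha,i})=1_{\A(n)}$ must be promoted to the full family $\sum_\alpha w(u_{\alpha,i_1},\dots)=\delta_{i_1,i_2}\cdots$. In (5) I invoke Corollary \ref{vanishing case} to reduce the hypothesis to $\sum_\alpha u_{\alpha,j}^{m}=1$ with $m\ge 3$, and Proposition \ref{sum of power m} applied to $\dd$ (which has an excess $m\ge 3$ of one symbol, hence is not alternating, and has length $\ge m\ge 3$) to conclude that every $u_{i,j}$ is a \emph{normal} partial isometry with $u_{i,k}^{*}u_{j,k}=u_{i,k}u_{j,k}^{*}=0$ for $i\ne j$. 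Then the projections $p_{\alpha,k}:=u_{\alpha,k}^{*}u_{\alpha,k}=u_{\alpha,k}u_{\alpha,k}^{*}$ are pairwise orthogonal for fixed $k$ and satisfy $\sum_\alpha p_{\alpha,k}=1_{\A(n)}$ (from $(u^{*}u)_{kk}=1$); using $u_{\alpha,i}=p_{\alpha,i}u_{\alpha,i}=u_{\alpha,i}p_{\alpha,i}$ one sees each monomial $u_{\alpha,i_1}\cdots u_{\alpha,i_m}$ vanishes unless $i_1=\dots=i_m$, while the all-equal case is the reduced identity; this is exactly the defining relation of $C(\qh_m,n)$. Cases (6) and (7) run along the same lines: for (6) one rotates $\dd=(1,1,*,*)$ to $(*,*,1,1)$ by Lemma \ref{invertible}(3) and applies Lemma \ref{AABB Case}, obtaining normal partial isometries so that the single family $p_{\alpha,k}$ again suffices to kill every cross term in $u_{\alpha,i_1}u_{\alpha,i_2}u_{\alpha,i_3}^{*}u_{\alpha,i_4}^{*}$; for (7) one applies Lemma \ref{ABAB Case} directly to $\dd=(1,*,1,*)$ and now carries \emph{both} the source projections $p_{\alpha,k}=u_{\alpha,k}^{*}u_{\alpha,k}$ and the range projections $q_{\alpha,k}=u_{\alpha,k}u_{\alpha,k}^{*}$, each pairwise orthogonal within a column and summing to $1_{\A(n)}$, checking junction by junction that $u_{\alpha,i_1}u_{\alpha,i_2}^{*}u_{\alpha,i_3}u_{\alpha,i_4}^{*}$ vanishes unless $i_1=i_2=i_3=i_4$. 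This delivers the defining relations of $C(\qh_0,n)$ and $C(\rqh,n)$ respectively.

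The only delicate point is the bookkeeping in (7): because the entries need not be normal one must track which of $p_{\alpha,k}$ or $q_{\alpha,k}$ absorbs each junction of the alternating word — a $u\,u^{*}$ junction is controlled by the $p$'s and a $u^{*}u$ junction by the $q$'s — and one must verify that both families are genuine partitions of unity, which is exactly where the unitarity of $\bar u$ (and not merely of $u$) enters. Everything else reduces to invoking the correct lemma followed by a short computation.
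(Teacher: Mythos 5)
Your proposal is correct and follows essentially the same route as the paper, whose own proof of this proposition is little more than a list of citations to the preceding results: Lemma \ref{invertible} for (1)--(4), Corollary \ref{vanishing case} and Proposition \ref{sum of power m} for (5), and Lemmas \ref{AABB Case} and \ref{ABAB Case} for (6)--(7). You supply more detail than the paper does (direct arguments for (2)--(3) where the paper defers to \cite{Liu3}, and the promotion of the single diagonal identity to the full family of off-diagonal relations in (5)--(7)); the one point to watch is that the orthogonality absorbing the junctions of $u_{\alpha,i_1}u_{\alpha,i_2}\cdots$ must be the within-row version ($u_{\alpha,i}^*u_{\alpha,j}=0$ for $i\neq j$ with the row index $\alpha$ fixed), which is what the proofs of Lemmas \ref{ABAB Case} and \ref{AABB Case} actually derive even though their statements are phrased column-wise, so your bookkeeping should be stated in those terms.
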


\begin{proof}

By Lemma \ref{invertible} (2), $u_{i,j}$'s are selfadjoint. Together with the unitary conditions, we have 
$\sum\limits_{k=1}^n u_{k, i} u_{k, j} =\delta_{i, j} $ which is the universal condition for $C(\qo,n)$. Therefore, (1) holds.  (2) and (3) follow the classification presented in \cite{Liu3}. (4) is straight from the definition of $C(\qb,n)$. (5) follows corollary \ref{vanishing case} and  Proposition \ref{sum of power m}.  (6) follows Lemma \ref{AABB Case} and (7) follows Lemma \ref{ABAB Case}.

\end{proof}

\section{Proof to Theorem 3}

First, we will show that the $\qu$  provides the maximal distribution for quantum random variables in Theorem \ref{all}.

\begin{lemma} Let $(\M,\phi)$ be a $W^*$-probability space,$\phi$ is normal faithful and $\M$ is generated by an infinite sequence of non-zero random variables $(x_i)_{i\in \N}$. If $\qs\subset\mathcal{DS}(X,\phi)$, then $\mathcal{DS}(X,\phi)\subset \qu$.
\end{lemma}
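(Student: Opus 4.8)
The plan is to split $X$ over its tail algebra via Curran's free de Finetti theorem, transport the quantum symmetry onto the operator-valued joint distribution, and then apply the rigidity Proposition~\ref{unitary condition}.

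Since $\qs\subset\mathcal{DS}(X,\phi)$, the sequence $X$ is quantum exchangeable, so Theorem~\ref{curran's de Finetti} provides a unital $W^*$-subalgebra $\B\subset\M$ and a $\phi$-preserving conditional expectation $\E\colon\M\to\B$ with respect to which $(x_i)_{i\in\N}$ are freely independent with a common $\B$-valued $*$-distribution. Fix $\A=(\A(n))_{n}\in\mathcal{DS}(X,\phi)$ and an $n$, and let $u=(u_{i,j})$ be the fundamental corepresentation of the compact matrix quantum group $\A(n)$. The first real step is to upgrade the scalar $\A(n)$-invariance of $(x_1,\dots,x_n)$ to $\A(n)$-invariance over $\B$: exactly as in the proof of Theorem~\ref{de Finetti}, since $\phi=(\phi|_\B)\circ\E$ with $\phi$ faithful and $\E$ $\phi$-preserving, each candidate $\B$-valued invariance identity, paired against $\phi|_\B$ in the $\B$-leg, collapses to the already known scalar one, and faithfulness of $\E$ forces the $\B$-valued identity. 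Thus $(x_1,\dots,x_n)$ is $\A(n)$-invariant in $(\M,\E)$.

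Next I would verify the hypotheses of Proposition~\ref{unitary condition} for $(x_1,\dots,x_n)$ in $(\M,\E)$. Put $y_i:=x_i-\E[x_i]$, so $\E[y_i]=0$ and $y_i$ is a $\B$-polynomial in $x_i$. For $i\neq j$, centering $x_ib$ and using freeness over $\B$ together with $\E[x_ib]=\E[x_i]b$ gives $\E[x_ibx_j]=\E[x_i]\,b\,\E[x_j]$ and $\E[x_ibx_j^*]=\E[x_i]\,b\,\E[x_i]^*$ (using $\E[x_i]=\E[x_j]$), while the common $\B$-distribution makes $\E[x_i]$, $\E[x_ibx_i]$ and $\E[x_ibx_i^*]$ independent of $i$; hence the first and second mixed $*$-moments of $(x_1,\dots,x_n)$ are exchangeable over $\B$, which is the $2$-exchangeability hypothesis. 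The dichotomy of Proposition~\ref{unitary condition} is automatic over $\B$: if $\E[x_1]\neq 0$ this is case~(1), and if $\E[x_1]=0$ then $\E[x_1bx_2]=\E[x_1]\,b\,\E[x_2]=0$ for all $b\in\B$, which is case~(2). Finally, for $i\neq j$ one has $\E[x_ibx_i^*]-\E[x_ibx_j^*]=\kappa^{(2)}(x_i,bx_i^*)$, and at $b=1_\M$ this equals $\E[y_iy_i^*]$, which is nonzero by faithfulness of $\E$ as long as $y_i\neq 0$, that is, $x_i\notin\B$ — this is the ``not identical'' hypothesis. Proposition~\ref{unitary condition} then yields that $u$ is unitary in $M_n(\A(n))$, so by the universality of $C(\qu,n)$ among $C^*$-algebras generated by the entries of a unitary $n\times n$ matrix (with comultiplication $u_{i,j}\mapsto\sum_k u_{i,k}\otimes u_{k,j}$), the assignment $u_{i,j}^{\qu}\mapsto u_{i,j}$ exhibits $\A(n)$ as a quantum subgroup of $C(\qu,n)$; since $n$ was arbitrary, $\mathcal{DS}(X,\phi)\subset\qu$.

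The one exceptional case is when $\kappa^{(2)}(x_i,\cdot\,x_i^*)\equiv 0$: then $y_i=0$, so $x_i=\E[x_i]\in\B$ for every $i$, and by the common $\B$-distribution all the $x_i$ coincide with the single constant $\E[x_1]$, so $X$ degenerates to a constant sequence and $\M=\B$. This is a genuine exception — a finite group of real matrices whose columns all sum to $1$ need not consist of unitaries, and such a group yields an element of $\mathcal{DS}(X,\phi)$ not contained in $\qu$ — so it must be excluded by the standing nontriviality assumption on $X$, which I would invoke at this point. The main obstacle in the argument is therefore the descent of the quantum symmetry from $\C$ to $\B$ carried out in the second paragraph; the combinatorial and $C^*$-algebraic heart of the matter is already packaged into Proposition~\ref{unitary condition}.
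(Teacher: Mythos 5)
Your proof follows essentially the same route as the paper's: apply Curran's theorem (Theorem~\ref{curran's de Finetti}) to get the tail algebra $\B$ and the $\B$-valued free i.i.d.\ structure, transfer the invariance to $(\M,\E)$ using faithfulness and the $\phi$-preserving property, and then invoke Proposition~\ref{unitary condition}. Where you go beyond the paper is in actually checking the hypotheses of Proposition~\ref{unitary condition} — the $2$-exchangeability over $\B$, the dichotomy $\E[x_1]\neq 0$ versus $\E[x_1]=0$ with $\E[x_1bx_2]=\E[x_1]b\E[x_2]=0$, and especially the condition $\E[x_ibx_i^*]\neq\E[x_ibx_j^*]$, which you correctly reduce to $\E[y_iy_i^*]\neq 0$ for $y_i=x_i-\E[x_i]$; the paper's proof is silent on this last point. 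Your observation that the argument genuinely breaks when every $x_i$ lies in $\B$ (a constant sequence is invariant under finite groups of non-unitary involutions whose columns sum to $1$) identifies a real degenerate case that the paper's hypothesis ``non-zero'' does not exclude, so flagging and excluding it is the right thing to do.
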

\begin{proof}
Since $\qs\subset\mathcal{DS}(X,\phi)$,  there exits a unital $W^*$-subalgebra $\range$ and a $\phi$-preserving conditional expectation $\E:\domain\rightarrow \range$ such that $(x_i)_{i\in \N}$ are freely independent in $(\domain,\E)$ with identical $\range$-valued distributions. 
Following the faithfulness of $\phi$ and $\phi$-preserving condition, the distributional symmetry is transferred  to the operator-valued probability space $(\M,\E)$. 
Thus, we have either  $\E[x_1]\neq 0$ or $\E[x_1]=0$ and $\E[x_1bx_2]=0$ for all $b\in \B$. By Proposition \ref{unitary condition}, $\A \subset C(\qu, n)$.
\end{proof}

Now, we are ready to prove Theorem \ref{all}. We will focus solely on the free case here, as the classical case follows straightforwardly by introducing a commuting relation within the framework.

\begin{proof}[Proof to Theorem 3]

It suffices to consider the non-vanishing conditions of $\range$-valued free cumulants of $x_i$.
Since $(x_i)_{i\in \N}$ is $\qu$-invariant, we do not need to consider $\kappa(x_i,x_i^*)$ and $\kappa(x_i^*,x_i)$.
If there exist $k\geq 1$ and $\dd\in\{1, *\}^k$ such that 
$$\kappa^k_{\E}(x_1^{d_1},\cdots,x_1^{d_k})\neq 0,$$
then by Proposition \ref{non-vanishing cumulant implies identity}, 
$$\sum\limits_{\alpha=1}^n u^{d_1}_{\alpha,j}u^{d_2}_{\alpha,1}\cdots u^{d_k}_{\alpha,1}=1$$.

This implies that, for all $n $, the maximal distributional quantum group $ \mathcal{A}_n$ must be one of the groups listed in the theorem. The only remaining point to prove is to describe the intersection of different quantum groups. 
Recall the quantum subgroup relations depicted in the following diagram:
\begin{center}

\end{center}

If $\mathcal{A}_n$ is a quantum subgroups of $C(\qo,n)$, from the work in \cite{Liu3}, $\mathcal{A}_n$ must  be one of  $C(\qo,n), C(\qh,n), C(\qb_s,n)$, and $C(\qo,n)$.

If $\mathcal{A}_n$ is a quantum subgroup of $C(\qb,n)$ and $C(\rqh,n)$,
then we have $\sum\limits_{k=1}^n u_{k1}=1_\A$, $\sum\limits_{k=1}^n u_{k,1}u_{k,1}^*=1_\A$ and  
the elements $u_{k,1}$'s are partial isometries with orthogonal initial spaces. 
Let $p$ be the projection onto the initial space of $u_{k,1}$.  Then, we have 
$$u_{k,1}=u_{k,1}p=\sum\limits_{k=1}^n u_{k1}p=1_\A p=p.$$
Similarly,  all $u_{i,j}$'s are projections.  
Consequently, $\mathcal{A}_n$ is a quantum subgroup of $C(\qs,n)$, which means$ \mathcal{A}_n=C(\qs,n)$.

Finally, if $\mathcal{A}_n$ is a quantum subgroup of $C(\qh_{m_1},n)$ and $C(\rqh_{m_2},n)$, with $m_1>m_2\geq 3$,
then we have 
$$\sum\limits_{k=1}^n u^{m_1}_{k1}=\sum\limits_{k=1}^n u^{m_2}_{k1}=1_\A.$$
Notice that in this case $u^{m_2}_{k1}$'s are projections and $u_{k1}$'s are normal, we have 
$$\sum\limits_{k=1}^n u^{m_1-m_2}_{k1}=1_\A.$$
Thus, $\A$ must be a quantum subgroup of $C(\qh{m}, n) $ with $m = \gcd(m_1, m_2) $ being the greatest common divisor of $m_1$and  $m_2$.\\
If $m=1$, this reduces to the case for subgroups of  $C(\qb,n)$ and $C(\rqh,n)$,  given us $\mathcal{A}_n=C(\qs,n)$.\\
If $m=2$, it leads to the selfadjoint case, which is quantum subgroup of $C(\qo,n)$.  \\
The proof is complete.
\end{proof}

\vspace{1cm}

\noindent{\bf Acknowledgment:} This work was supported by Zhejiang Provincial Natural Science Foundation of China under
Grant No. LR24A010002 and  National Natural Science Foundation of China under Grant No. 12171425.

\bibliographystyle{plain}

\bibliography{references}

\vspace{1cm}

\noindent School  of Mathematics\\
Zhejiang University	\\
Hangzhou, Zhejiang 310058, China\\
E-MAIL: lwh.math@zju.edu.cn \\

\end{document}